\documentclass[english,11pt, a4paper]{article}

\usepackage[T1]{fontenc}

\usepackage{amssymb}
\usepackage{amsmath}
\usepackage{amsfonts}
\usepackage{amsthm}
\usepackage{mathtools}
\usepackage{mathabx}
\usepackage{comment}
\usepackage[dvipsnames]{xcolor}
\usepackage{graphicx}
\usepackage{float}

\usepackage{mdframed}
\usepackage{enumitem} 

\usepackage[export]{adjustbox}
\usepackage{bbold}
\usepackage[percent]{overpic}
\usepackage{units}

\usepackage{booktabs,array}
\usepackage{url} 
\usepackage{hyperref}

\let\OLDthebibliography\thebibliography
\renewcommand\thebibliography[1]{
  \OLDthebibliography{#1}
  \setlength{\parskip}{0pt}
  \setlength{\itemsep}{0pt plus 0.3ex}
}


\addtolength{\textwidth}{2cm}
\addtolength{\hoffset}{-1cm}
\addtolength{\textheight}{2cm}
\addtolength{\voffset}{-1cm}
 
 \setlength{\parindent}{0em}
 \setlength{\parskip}{0.3em}

\linespread{1.05}

\hypersetup{
    colorlinks=true,
    citecolor=purple!70!blue,
    linkcolor=black,
    filecolor=black,      
    urlcolor=black,
    pdftitle={Free energy and quark potential in Ising lattice gauge theory via Cluster Expansions},
    }

\usepackage{pgfplots}
\pgfplotsset{compat=1.6}

\usetikzlibrary{calc,quotes,angles,arrows.meta,positioning, shapes}
\usetikzlibrary{decorations.pathreplacing,decorations.markings}

\usepackage{float}

\usepackage{caption}
\usepackage{subcaption}

\usepackage[nocompress]{cite}

\theoremstyle{plain}%
\newtheorem{theorem}{Theorem}[section]
\newtheorem{lemma}[theorem]{Lemma}
\newtheorem{proposition}[theorem]{Proposition}

\newtheorem*{conjecture*}{Conjecture}

 \numberwithin{equation}{section}

\theoremstyle{definition}
\newtheorem{definition}[theorem]{Definition}

\theoremstyle{remark}
\newtheorem{remark}[theorem]{Remark}

\let \le \leqslant
 \let \leq \leqslant
 \let \geq \geqslant
 \let \ge \geqslant

\DeclareMathOperator{\real}{Re}

\DeclareMathOperator{\support}{supp}

\DeclareMathOperator{\dist}{dist}

\usepackage{fancyhdr}
\usepackage{graphics}
\usepackage{graphicx}
\usepackage{soul}

\definecolor{detailcolor00}{rgb}{0.4405, 0.204, 0.343}
\definecolor{detailcolor01}{rgb}{0.546, 0.215, 0.352}
\definecolor{detailcolor02}{rgb}{0.675, 0.247, 0.387} 
\definecolor{detailcolor03}{rgb}{0.775, 0.317, 0.455}
\definecolor{detailcolor04}{rgb}{0.830, 0.421, 0.553} 
\definecolor{detailcolor05}{rgb}{0.831, 0.533, 0.663}
\definecolor{detailcolor06}{rgb}{0.779, 0.619, 0.775}
\definecolor{detailcolor07}{rgb}{0.724, 0.694, 0.827}
\definecolor{detailcolor08}{rgb}{0.687, 0.770, 0.880}
\definecolor{detailcolor09}{rgb}{0.671, 0.839, 0.904}
\definecolor{detailcolor10}{rgb}{0.659, 0.872, 0.882}

\usepackage{scalerel,stackengine}
\newcommand\pig[1]{\scalerel*[5.5pt]{\Big#1}{%
  \ensurestackMath{\addstackgap[1.5pt]{\big#1}}}}
\newcommand\pigl[1]{\mathopen{\pig{#1}}}
\newcommand\pigr[1]{\mathclose{\pig{#1}}}

\ifdim\overfullrule>0pt 
  \usepackage{environ}

  \NewEnviron{tikzpicture}{%
    \begin{pgfpicture}
    \pgfpathrectanglecorners{\pgfpointorigin}{\pgfpoint{3cm}{3cm}}%
     \pgfusepath{stroke}\end{pgfpicture}%
  }
\fi

\makeatletter
\newcommand{\hathat}[1]{%
\begingroup%
  \let\macc@kerna\z@%
  \let\macc@kernb\z@%
  \let\macc@nucleus\@empty%
  \hat{\raisebox{.3ex}{\vphantom{\ensuremath{#1}}}\smash{\hat{#1}}}%
\endgroup%
}

\newcommand{\smallhathat}[1]{%
\begingroup%
  \let\macc@kerna\z@%
  \let\macc@kernb\z@%
  \let\macc@nucleus\@empty%
  \hat{\raisebox{.05ex}{\vphantom{\ensuremath{#1}}}\smash{\hat{#1}}}%
\endgroup%
}

\newcommand{\smallsmallhathat}[1]{%
\begingroup%
  \let\macc@kerna\z@%
  \let\macc@kernb\z@%
  \let\macc@nucleus\@empty%
  \hat{\raisebox{-.2ex}{\vphantom{\ensuremath{#1}}}\smash{\hat{#1}}}%
\endgroup%
}
\makeatother

\makeatletter
\newcommand{\subalign}[1]{%
  \vcenter{%
    \Let@ \restore@math@cr \default@tag
    \baselineskip\fontdimen10 \scriptfont\tw@
    \advance\baselineskip\fontdimen12 \scriptfont\tw@
    \lineskip\thr@@\fontdimen8 \scriptfont\thr@@
    \lineskiplimit\lineskip
    \ialign{\hfil$\m@th\scriptstyle##$&$\m@th\scriptstyle{}##$\hfil\crcr
      #1\crcr
    }%
  }%
}
\makeatother

\title{Free energy and quark potential in \\ Ising lattice gauge theory via cluster expansion}

\author{Malin P. Forsstr\"om\thanks{University of Gothenburg, email: palo@chalmers.se} \, and Fredrik Viklund\thanks{KTH Royal Institute of Technology, email: frejo@kth.se}} 

\begin{document}

\maketitle

\begin{abstract}
    We revisit the cluster expansion for Ising lattice gauge theory on $\mathbb{Z}^m, \, m \ge 3,$ with Wilson action, 
    at a fixed inverse temperature \( \beta\) in the low-temperature regime.
    We prove existence and analyticity of the 
    infinite volume limit of the free energy 
    and compute the first few terms in its expansion in powers of $e^{-\beta}$. We further analyze Wilson loop expectations and derive an estimate that shows how the lattice scale geometry of a loop is reflected in the large $\beta$ asymptotic expansion. Specializing to axis parallel rectangular loops $\gamma_{T,R}$ with side-lengths $T$ and $R$, we consider the limiting function
    \[
        V_\beta(R) \coloneqq \lim_{T \to \infty} - \frac{1}{T} \log \, \langle W_{\gamma_{T,R}} \rangle_\beta, 
    \]
    known as the static quark potential in the physics literature. We verify existence of the limit (with an estimate on the convergence rate) and compute the first few terms in the expansion in powers of $e^{-\beta}$.
    As a consequence, a strong version of the perimeter law follows. 
    We also treat $- \log \, \langle W_{\gamma_{T,R}} \rangle_\beta / (T+R)$ as $T, R$ tend to infinity simultaneously and give analogous estimates.
\end{abstract}

\section{Introduction}

Given a hypercubic lattice $\mathbb{Z}^m$ and a choice of structure group $G$, a (pure) lattice gauge theory models a random discretized connection form on a principal $G$-bundle on an underlying discretized $m$-dimensional smooth manifold. More concretely, after restricting to a finite box,
it is a Gibbs probability measure on gauge fields, i.e., $G$-valued discrete $1$-forms $\sigma$ defined on edges of the lattice. The probability measure is defined relative to the product Haar measure on $G$. The action can be taken to be of the form $S(\sigma) = -\sum_p A_p(\sigma)$, where for some choice of representation $\rho$, $A_p(\sigma) = \textrm{Re tr } \rho(\sigma_{e_1}\sigma_{e_2}\sigma_{e_3}\sigma_{e_4})$ captures the microscopic holonomy around the plaquette $p$ whose boundary consists of the edges $e_1, \ldots, e_4$. The coupling parameter $\beta$ acts as the inverse temperature. In a formal scaling limit, one recovers the Yang-Mills action while the model enjoys exact gauge symmetry on the discrete level. In contrast to the corresponding continuum Yang-Mills theories, the discrete measure is immediately rigorously defined, and its analysis becomes a problem of statistical mechanics. 
Lattice gauge theories were introduced independently by Wegner and Wilson in the 1970s \cite{w1971,w1974}.

Despite the presence of local symmetries, lattice gauge theories can exhibit non-trivial phase structure, but one has to consider non-local observables. Given a nearest-neighbor lattice loop $\gamma$, the Wilson loop variable $W_\gamma$ records the random holonomy of the gauge field as $\gamma$ is traversed once. Starting with the original paper of Wilson \cite{w1974}, it has been argued in the physics literature that the decay rate of its expectation $\langle W_\gamma \rangle_\beta$ (in an infinite volume limit) as the loop grows encodes information about whether or not ``static quarks'' are ``confined'' in the model, see, e.g.,  of \cite[Sect.~3.5]{montway-munster} for a textbook discussion. Let $\gamma_{T,R}$ be a rectangular loop with axis parallel sides and, taking its existence for granted, consider the limit
\[
V_\beta(R) = -\lim_{T \to \infty}\frac{1}{T}\log \, \bigl|\langle W_{\gamma_{T,R}} \rangle_\beta \bigr|.
\]
The function $V_\beta(R)$ is called the static quark potential and is interpreted as the energy required to separate a static quark-antiquark pair to distance $R$, see, e.g., \cite[Sect.~3.2]{montway-munster}. Wilson's criterion for quark confinement can then be formulated as follows: confinement occurs at $\beta$ if and only if the energy $V_\beta(R)$ diverges as $R \to \infty$. However, except in the special case of planar theories, it seems that detailed mathematical proofs of such statements are not available in the literature, even for finite abelian $G$. Instead, the two phases are rigorously separated via estimates: confinement occurs at $\beta$ if there exists some function $V(R)$, unbounded as $R \to \infty$, such that  $\liminf_{T \to \infty} -\frac{1}{T}  \log \, | \langle W_{\gamma_{R,T}} \rangle_\beta | \ge  V(R)$, and in this case, Wilson loop expectations are said to follow the area law. If, on the other hand, there is a constant $c>0$ independent of $R$ such that $\limsup_{T \to \infty} - \frac{1}{T} \log\, | \langle W_{\gamma_{R,T}} \rangle_\beta |  < c$, the Wilson loop expectations are said to follow the perimeter law. (The terminology comes from the expectation that a priori bounds of the form  $e^{-c RT } \lesssim |\langle W_{\gamma_{R, T}} \rangle_\beta| \lesssim e^{-C (R+T)}$ should be essentially saturated in the two phases.) See \cite{c2021} for a precise formulation of a condition for confinement and a general discussion from a probabilistic perspective, and Section~\ref{sec: other work} below for a brief discussion of other related work.


Here we will consider lattice gauge theory with structure group $G=\mathbb{Z}_2$ on $\mathbb{Z}^m, m \ge 3$, also known as Ising lattice gauge theory, for  $\beta$ in the subcritical regime. See Section~\ref{sec: preliminaries1} for the precise definition. This model was first studied by Wegner~\cite{w1971} and can be viewed as a version of the usual Ising model on $\mathbb{Z}^m$ where the global spin flip symmetry has been ``upgraded'' to a local symmetry. We employ a cluster expansion to study the free energy, static quark potential, and related quantities. This classical method has been used in the past to analyze lattice gauge theories; see Section~\ref{sec: other work}. While we only work with $G = \mathbb{Z}_2$, we believe our results can be generalized to any choice of finite structure group with minor modifications. 

In order to state our main results we need to give some definitions.


\subsection{Ising lattice gauge theory and Wilson loop expectations}\label{sec: preliminaries1}
Let \( m \geq 3 \). The lattice $\mathbb{Z}^m$ has a vertex at each point \( x \in \mathbb{Z}^m \) with integer coordinates and a non-oriented edge between each pair of nearest neighbors. To each non-oriented edge \( \bar e \) in \( \mathbb{Z}^m \) we associate two oriented edges \( e_1 \) and \( e_2 = -e_1 \) with the same endpoints as \( \bar e \) and opposite orientations. 

Let \( \mathbf{e}_1 \coloneqq (1,0,0,\ldots,0)\), \( \mathbf{e}_2 \coloneqq (0,1,0,\ldots, 0) \), \ldots, \( \mathbf{e}_m \coloneqq (0,\ldots,0,1) \) be oriented edges corresponding to the unit vectors in \( \mathbb{Z}^m \).

If \( v \in \mathbb{Z}^m \) and \( j_1 <   j_2 \), then \( p = (v +  \mathbf{e}_{j_1}) \land  (v+ \mathbf{e}_{j_2}) \) is a positively oriented 2-cell, also known as a positively oriented plaquette. 
We let \( B_N \) denote the set \(   [-N,N]^m \) of \( \mathbb{Z}^m \), and we let \( V_N \), \( E_N \), and \( P_N \) denote the sets of oriented vertices, edges, and plaquettes, respectively, whose end-points are all in \( B_N \).

We let \( \Omega^1(B_N,\mathbb{Z}_2) \) denote the set of all  \( \mathbb{Z}_2 \)-valued  1-forms \( \sigma \) on \( E_N \), i.e., the set of all \( \mathbb{Z}_2 \)-valued functions \(\sigma \colon  e \mapsto \sigma_e \) on \( E_N \) such that \( \sigma_e =  -\sigma_{-e} \) for all \( e \in E_N \). We write $\rho: \mathbb{Z}_2 \to \mathbb{C},\, g \mapsto e^{\pi i g}$ for the natural representation of $\mathbb{Z}_2$.

When \( \sigma \in \Omega^1(B_N,\mathbb{Z}_2) \) and \( p \in P_N \), we let \( \partial p \) denote the four edges in the oriented boundary of \( p \) and define
\begin{equation*}
    (d\sigma)_p \coloneqq \sum_{e \in \partial p} \sigma_e.
\end{equation*} 
Elements \( \sigma \in \Omega^1(B_N,\mathbb{Z}_2)  \) are referred to as \emph{gauge field configurations}.

The Wilson action functional for pure gauge theory is defined by (see, e.g.,~\cite{w1974})
\begin{equation*}
    S(\sigma) \coloneqq - \sum_{p \in P_N}  \rho \bigl( (d\sigma)_p\bigr), \quad \sigma \in \Omega^1(B_N,\mathbb{Z}_2) .
\end{equation*}
The Ising lattice gauge theory probability measure on gauge field configurations is defined by
\[
\mu_{\beta,N}(\sigma)  \coloneqq
    Z_{\beta,N}^{-1} e^{-\beta S(\sigma)} , \quad \sigma \in \Omega^1(B_N,\mathbb{Z}_2) .
\]
Here for $N \in \mathbb{N}$, \[Z_{\beta,N} = \sum_{\sigma \in  \Omega^1(B_N,\mathbb{Z}_2) } e^{-\beta S(\sigma)}\] is the partition function and while we only consider the probability measure for positive $\beta$, the partition function is defined for $\beta \in \mathbb{C}$ when $N < \infty$. For $\beta \ge  0$, the corresponding expectation is written $\mathbb{E}_{\beta,N}.$ Let $\gamma$ be a nearest neighbor loop on $\mathbb{Z}^m$ contained in $B_N$. Given $\sigma \in \Omega^1(B_N,\mathbb{Z}_2) $, the Wilson loop variable for Ising lattice gauge theory is defined by
\[
W_\gamma = \rho\bigl( \sigma(\gamma) \bigr) = \prod_{e \in \gamma} \rho \bigl(\sigma(e) \bigr) = e^{\pi i \sum_{e \in \gamma} \sigma(e)} .
\]
For \( \beta \geq 0 \), let \( \langle W_\gamma \rangle_{\beta} \) denote the infinite volume limit of its expected value:
\[
    \langle W_\gamma \rangle_{\beta} \coloneqq \lim_{N \to \infty} \mathbb{E}_{\beta,N}[W_\gamma].
\]
See, e.g., \cite{flv2020} for a proof of the existence of this limit.
\subsection{Main results}

Our first result concerns the free energy for free boundary conditions. Define for $m \ge 3$ 
\begin{equation}\label{eq: beta0}
        \beta_0 = \beta_0(m) \coloneqq  
        2^{-1} \log 10(m-2) + 6^{-1} 
\end{equation} 
and let \( |P_N^+|\) be the number of positively oriented plaquettes in the restriction of \( \mathbb{Z}^m\) to the set \( [-N,N]^m\). Note that $|P_N^+| \sim \binom{m}{2}(2N)^m$ as $N\to \infty$. 
\begin{theorem}[Free energy]\label{theorem: logZ}
    Suppose $m \ge 3$ and  $\real \beta > \beta_0(m)$. Then 
    \begin{equation*}
        \begin{split}
           F(\beta)\coloneqq \lim_{N \to \infty} \frac{1}{|P_N^+|}\log Z_{\beta,N},
        \end{split} 
    \end{equation*}
  defines an analytic function, and
    \[
    F(\beta)=\frac{2}{m-1}e^{-8(m-1)\beta}
            +
            \frac{12(m-1) -8 }{2(m-1)-1}e^{-4(4(m-1)-2)\beta}
            +
            O(e^{-16(m-1)\real \beta}).
    \]
\end{theorem}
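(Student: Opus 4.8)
The plan is to set up a polymer (cluster) expansion for $\log Z_{\beta,N}$ in the low-temperature regime and extract the leading terms. First I would pass to the ``spin-wave-free'' representation: since $\mathbb{Z}_2$ is self-dual in the relevant sense, I rewrite $Z_{\beta,N}$ by expanding each plaquette weight $e^{\beta\rho((d\sigma)_p)} = \cosh\beta\,(1 + \tanh\beta\cdot\rho((d\sigma)_p))$ and summing over $\sigma$. After dividing by the trivial term (corresponding to $d\sigma\equiv 0$, contributing $e^{\beta|P_N^+|}$ and the volume of the gauge orbits), the surviving terms are indexed by closed surfaces: collections of plaquettes $S$ with the property that every edge is covered an even number of times, i.e. $\partial S = 0$ over $\mathbb{Z}_2$. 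Each such surface $S$ contributes a weight $(\tanh\beta)^{|S|}$, or in the low-temperature variable, something comparable to $e^{-2\beta|S|}$ after accounting for the normalization; I would track the exact exponents carefully since the theorem's constants depend on them. The minimal closed surface is the boundary of a unit $3$-cell (a ``cube''), which has $6$ plaquettes — but here one must be careful about the precise energy cost, since the stated leading exponent $8(m-1)$ suggests the relevant combinatorial objects and their energies should be recomputed directly from $S(\sigma)$ rather than guessed.

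Next I would organize these closed surfaces into polymers: a polymer is a connected (in an appropriate adjacency, say sharing an edge) closed surface, or more precisely a connected component of the support. Two polymers are compatible if their supports are disjoint (or non-adjacent); the partition function then factors as a polymer partition function $\sum_{\{\gamma_1,\dots,\gamma_k\}} \prod_i w(\gamma_i)$ over pairwise compatible families. Applying the standard cluster expansion (Kotecký–Preiss or the Dobrushin criterion), $\frac{1}{|P_N^+|}\log Z_{\beta,N}$ converges as $N\to\infty$ to a sum over clusters ``anchored'' at a fixed plaquette, and this sum is absolutely convergent and analytic in $\beta$ provided the polymer weights satisfy a smallness condition of the form $\sum_{\gamma \ni p} |w(\gamma)| e^{a|\gamma|} \le a$ for suitable $a$. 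This is precisely where the hypothesis $\real\beta > \beta_0(m) = \tfrac12\log 10(m-2) + \tfrac16$ enters: a counting bound on the number of connected closed surfaces of given size through a fixed plaquette gives a growth rate like $(10(m-2))^{|\gamma|}$ (or similar), and $e^{-\beta}$ times this must be summable, forcing $\beta > \tfrac12\log 10(m-2) + O(1)$; the $\tfrac16$ is the slack needed to close the Kotecký–Preiss inequality. I would state this counting lemma explicitly (number of connected plaquette-sets of size $n$ containing a given plaquette is at most $C\lambda^n$ for an explicit $\lambda$) and verify the convergence criterion.

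Finally, to get the explicit expansion in powers of $e^{-\beta}$, I would expand the convergent cluster sum order by order in the low-temperature activity. The smallest closed surfaces dominate: I enumerate, up to lattice translation, all connected closed $\mathbb{Z}_2$-surfaces with small total plaquette count, compute their exact energies (hence their weights), and count their multiplicities per unit volume (the number of embeddings per positively oriented plaquette, which brings in the factors of $m-1$ from the number of available ``directions'' for a unit cube face and for the next-order configurations). Summing the single-polymer contributions of the two smallest types yields the two displayed terms $\frac{2}{m-1}e^{-8(m-1)\beta}$ and $\frac{12(m-1)-8}{2(m-1)-1}e^{-4(4(m-1)-2)\beta}$, and the cluster expansion's convergence gives that everything else — larger single polymers and all multi-polymer clusters — is $O(e^{-16(m-1)\real\beta})$; I would check that no omitted configuration (in particular no two-cube cluster) has energy below $16(m-1)$. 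The main obstacle, and the part requiring genuine care rather than routine bookkeeping, is the combinatorial geometry: correctly identifying the minimal closed surfaces on $\mathbb{Z}^m$, computing their exact Wilson energies $S(\sigma)$ (not just their plaquette counts), and counting their per-volume multiplicities to pin down the exact rational prefactors and the $(m-1)$-dependence — getting the second term right in particular demands a careful case analysis of the next-smallest surface types and confirming their energies and degeneracies.
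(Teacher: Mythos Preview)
Your overall architecture---polymer model, Koteck\'y--Preiss/Dobrushin convergence, translation-invariance to pass to the anchored-cluster representation of the free energy, then order-by-order extraction of the two smallest excitations---is exactly the paper's strategy. But your opening step sets up the wrong expansion, and this is not a cosmetic issue.

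You propose the character expansion $e^{\beta\rho((d\sigma)_p)}=\cosh\beta\,(1+\tanh\beta\cdot\rho((d\sigma)_p))$, which after summing over $\sigma$ yields closed $2$-surfaces with activity $(\tanh\beta)^{|S|}$. That is the \emph{high}-temperature expansion: it converges for small $\beta$, and its minimal polymer is indeed the $6$-plaquette boundary of a $3$-cell. For the regime $\real\beta>\beta_0(m)$ one has $\tanh\beta\to 1$, so the polymer weights are not small and the Koteck\'y--Preiss criterion cannot be verified. Your remark that $(\tanh\beta)^{|S|}$ is ``comparable to $e^{-2\beta|S|}$'' is simply false for large $\beta$; this is the place where the proposal breaks.

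What the paper does instead is the low-temperature (contour/vortex) expansion. One first passes from edge configurations to plaquette configurations via $Z_{\beta,N}=\sum_{\omega\in\Omega^2_0(B_N,\mathbb{Z}_2)}\phi_\beta(\omega)$, where $\phi_\beta(\omega)=e^{-4\beta|(\support\omega)^+|}$. The polymers are \emph{vortices}: connected components of $\support\omega$ in the plaquette-adjacency graph $\mathcal{G}_2$ (two plaquettes adjacent iff they share a $3$-cell coboundary). The minimal vortex is not the boundary of a $3$-cell; it is $d(g\cdot e)$ for a single edge $e$, whose support is the $2(m-1)$ plaquettes in $\hat\partial e$, with activity $e^{-8(m-1)\beta}$---this is where the exponent $8(m-1)$ you noticed actually comes from. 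The next-smallest vortices have size $4(m-1)-2$ and arise from $d\sigma$ with $\sigma$ supported on two edges sharing a plaquette; these give the second displayed term. The convergence criterion is then checked against the degree bound $M=10(m-2)$ in $\mathcal{G}_2$ (whence $\beta_0=\tfrac12\log M+\tfrac16$), and the tail (all vortices of size $\ge 4(m-1)$ and all multi-vortex clusters) is $O(e^{-16(m-1)\real\beta})$.

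So: replace your character-expansion paragraph with the vortex representation $\omega=d\sigma\in\Omega^2_0$, take connected components in $\mathcal{G}_2$ as polymers with activity $e^{-4\beta|(\support\nu)^+|}$, and re-identify the minimal and next-minimal excitations as above. After that correction, the rest of your outline goes through and matches the paper.
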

We next consider Wilson loop expectations. Given a loop $\gamma$ let \( \ell \coloneqq |\support \gamma|\) be its length, i.e., the number of edges of $\gamma$. Further, let \( \ell_c \coloneqq \ell_c(\gamma)\) denote the number of pairs of non-parallel edges that are both in the boundary of some common plaquette (corners), and let \( \ell_{b}\coloneqq\ell_b(\gamma)\) denote the number of pairs \( (e,e')\) of parallel edges that are both in the boundary of some common plaquette (bottlenecks). Set
\[
v_\beta \coloneqq 2 e^{-8(m-1)\beta}+12(m-1) e^{-4(4(m-1)-2)\beta}.
\]
\begin{theorem}\label{theorem: main theorem}
    Suppose $m \ge 3$ and  \(\beta > \beta_0(m)\). There exists $C < \infty$
 depending only on $m$ such that for any loop $\gamma$ with length \( \ell ,\) \( \ell_c\) corner edges, and \( \ell_b\) bottleneck edges, 
    \begin{equation} \label{eq: main theorem}
             \left|-\frac{1}{\ell}\log \, \langle W_\gamma \rangle_\beta - \left(v_\beta-4\frac{\ell_c+\ell_{b}}{\ell} e^{-4(4(m-1)-2)\beta} \right)\right|
            \leq C e^{-16(m-1)\beta}.
    \end{equation}
\end{theorem}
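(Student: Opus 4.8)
The plan is to run the standard polymer/cluster expansion for Ising lattice gauge theory in the low-temperature (high-$\beta$) regime, keeping track not just of the leading behavior but of the geometry-dependent corrections coming from corners and bottlenecks of the loop $\gamma$. First I would recall the dual description: writing $e^{-\beta S(\sigma)}$ as a product over plaquettes and expanding, a configuration $\sigma$ is encoded by its set of ``frustrated'' plaquettes $P(\sigma) = \{p : (d\sigma)_p = 1\}$, which is always a closed surface (a $\mathbb{Z}_2$-$2$-cycle, $\partial^* P(\sigma)=0$ in the appropriate sense). Up to the gauge freedom, $\sigma \leftrightarrow P(\sigma)$ is essentially a bijection onto closed surfaces, and each frustrated plaquette costs a factor $e^{-2\beta}$ relative to the fully-satisfied reference configuration; more precisely $\mathbb{E}_{\beta,N}[W_\gamma]$ becomes a ratio of partition functions of a polymer gas of connected closed surfaces, where a surface $S$ interacting with $\gamma$ is reweighted by $(-1)^{|S \cap \gamma^\dagger|}$ (the sign counts how the surface links the loop, i.e. how many times it crosses a fixed Seifert-type surface bounded by $\gamma$). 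This is standard and presumably set up earlier in the paper; I would cite that.

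Next, the activities $z(S) = e^{-2\beta |S|}$ (times the sign for surfaces touching $\gamma$) satisfy the Kotecký–Preiss convergence criterion precisely when $\beta > \beta_0(m)$ — the constant $\beta_0(m) = \frac12\log 10(m-2) + \frac16$ is exactly tuned so that the entropy of connected surfaces containing a fixed plaquette is dominated. (The factor $10(m-2)$ is the branching bound for the number of ways a connected surface grows, and the $+1/6$ gives a geometric-series slack.) With convergence in hand, $\log Z_{\beta,N}$ and $\log \mathbb{E}_{\beta,N}[W_\gamma]$ both expand as absolutely convergent sums over clusters of surfaces, and in the quotient $\log \langle W_\gamma \rangle_\beta$ only clusters containing at least one surface that links $\gamma$ survive. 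Thus $-\frac1\ell \log\langle W_\gamma\rangle_\beta = \frac1\ell \sum_{\text{clusters } \mathbf{C} \text{ touching } \gamma} \big(\text{bulk term} - \text{linked term}\big)$, and I would organize the sum by the ``footprint'' of the cluster on $\gamma$.

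The main computation is then to identify the small clusters. The smallest surface linking a single edge $e$ of $\gamma$ is the boundary of a unit cube incident to $e$: it has $6$ faces, contributes at the level $e^{-2\beta\cdot 6}$... wait — one must be careful: a single plaquette is already a legitimate (degenerate, non-closed in the right homology) object? No: the relevant closed surfaces linking $e$ in $\mathbb{Z}^m$ have $|S| = 2(2(m-1)-1)+ \dots$; the paper's exponents $8(m-1)$ and $4(4(m-1)-2)$ dictate that the minimal linking surface has $|S| = 4(m-1)$ plaquettes (a ``tube'' around $e$, whose boundary is a loop of $2(m-1)$ edges doubled), giving $e^{-8(m-1)\beta}$ with multiplicity $2$, and the next one has $|S| = 8(m-1)-4$ giving $e^{-4(4(m-1)-2)\beta}$ with multiplicity $12(m-1)$ per edge — matching $v_\beta$. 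The corner and bottleneck corrections arise because when two edges $e,e'$ of $\gamma$ sit in a common plaquette, a minimal surface can link both simultaneously more cheaply than two independent surfaces, so the per-edge count of the second-order surfaces is reduced; each such pair removes $4e^{-4(4(m-1)-2)\beta}$ from the naive count, which is exactly the $-4(\ell_c+\ell_b)/\ell \cdot e^{-4(4(m-1)-2)\beta}$ term. All remaining clusters — larger surfaces, multi-surface clusters, and higher-order overlap effects — are bounded by the tail of the Kotecký–Preiss series and contribute $O(e^{-16(m-1)\beta})$ uniformly in $\gamma$ after dividing by $\ell$, since each contributes per unit length of $\gamma$. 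The hard part will be the bookkeeping in this last step: proving that \emph{every} configuration of clusters not already accounted for — in particular clusters straddling corners/bottlenecks, and the interference between a linking surface and a nearby non-linking one — is genuinely of size at least $16(m-1)$ in the exponent, and that the sign cancellations don't conspire to produce a larger error. Concretely I would (i) enumerate all connected surfaces $S$ with $|S| \le 8(m-1)-4$ that link some edge of $\gamma$, compute their signed contributions and multiplicities as functions of the local geometry at that edge (isolated edge vs. corner vs. bottleneck), (ii) verify all multi-surface clusters with total linking contribution below the threshold in fact force total size $\ge 16(m-1)$, and (iii) collect the remainder via the standard cluster-expansion tail estimate, which gives the stated $C e^{-16(m-1)\beta}$ bound with $C=C(m)$. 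The error term being expressed with $\beta$ rather than $\operatorname{Re}\beta$ signals we are now on the real axis, so no analyticity subtleties enter here beyond what Theorem~\ref{theorem: logZ} already provides.
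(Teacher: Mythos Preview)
Your plan is essentially the paper's: express $-\log\langle W_\gamma\rangle_\beta$ via the cluster expansion (Proposition~\ref{proposition: the cluster expansion}), verify Koteck\'y--Preiss convergence at $\beta>\beta_0(m)$ (Lemma~\ref{lemma: the assumption on a}), compute explicitly the single-vortex contributions of sizes $2(m-1)$ and $4(m-1)-2$ (Lemmas~\ref{lemma: m1min1 new}--\ref{lemma: m1min2 new}), and bound every remaining cluster---single vortices of size $\ge 4(m-1)$ and all multi-vortex clusters, whose total size is automatically $\ge 2\cdot 2(m-1)=4(m-1)$---by $O(\ell\,e^{-16(m-1)\beta})$ (Propositions~\ref{proposition: m = 1 contribution new}--\ref{proposition: m >=2 contribution new} via Lemma~\ref{lemma: new q bound}). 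Your steps (i)--(iii) match these pieces one-to-one.

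One small correction to your description of the corner/bottleneck mechanism: a second-order vortex $\nu=d\sigma$ with $(\support\sigma)^+=\{e,e'\}$ contributes to the sum precisely when $\nu(q)=\sigma(\gamma)=1$, i.e.\ when \emph{exactly one} of $e,e'$ lies in $\gamma$. At a corner or bottleneck \emph{both} edges are in $\gamma$, so $\sigma(\gamma)=0$ and that vortex simply drops out---it does not ``link both more cheaply''; in $\mathbb{Z}_2$ it does not link at all. The arithmetic you arrive at (a deficit of $4e^{-4(4(m-1)-2)\beta}$ per pair) is correct regardless, so this does not affect the argument.
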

    Notice how the lattice scale geometry of the loop enters into the estimate \eqref{eq: main theorem}. Given a continuum loop, we see that the expansion is sensitive to the way the loop is embedded and discretized. For instance, the term \( (\ell_c + \ell_b)/\ell\) is very different for an axis-parallel square compared to the natural discretization of the same square rotated by \( 45^\circ.\) 
\begin{remark}
    Using the methods of the proof of Theorem~\ref{theorem: main theorem}, it is, in principle, straightforward to obtain estimates with higher precision in terms of the expansion in powers of $e^{-\beta}$. If higher order terms are included in~\eqref{eq: main theorem}, the constants of the corresponding polynomial in \( e^{-\beta}\) will further depend on the lattice scale geometry of the loop. 
\end{remark}
Our next result concerns the static quark potential $V_\beta(R)$. 
\begin{theorem}[Quark potential and perimeter law]\label{theorem: limit of ratio exists 2}
    Suppose $m \ge 3$ and  $\beta > \beta_0(m)$. There exists a function $V_\beta(\cdot)$ and a constant $C < \infty$ such that the following holds. Let \( R \geq 2 \) be an integer and for \( T = 1, 2, \ldots \) let \( \gamma_{R,T}\) be a rectangular loop with side lengths \( R \) and \( T\) and axis-parallel sides.  
    Then, 
    \begin{equation*}
   \left| -\frac{1}{T}\log \, \langle W_{\gamma_{R,T}} \rangle_\beta - V_\beta(R)\right| \le \frac{C}{T}.
    \end{equation*}
    The limit $V_\beta:= \lim_{R \to \infty}V_\beta(R)$ exists and
    \[
    V_\beta = 4 e^{-8(m-1)\beta} + 24 (m-1)e^{-4(4(m-1)-2)\beta} + O(e^{-16(m-1)\beta}).
    \]
\end{theorem}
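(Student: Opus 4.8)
The plan is to deduce everything from Theorem \ref{theorem: main theorem} applied to the rectangular loops $\gamma_{R,T}$. First I would compute the lattice-scale invariants of $\gamma_{R,T}$. Such a loop has length $\ell = 2R + 2T$, exactly four corners (one at each vertex of the rectangle), so $\ell_c = 4$, and no bottlenecks, so $\ell_b = 0$ — here I need $R \ge 2$ and $T \ge 2$ to rule out degenerate coincidences of parallel edges sharing a plaquette; the case $T = 1$ can be absorbed into the constant $C$ separately. Plugging these into \eqref{eq: main theorem} gives
\begin{equation*}
\left| -\log \langle W_{\gamma_{R,T}}\rangle_\beta - \Bigl( (2R+2T)\, v_\beta - 16\, e^{-4(4(m-1)-2)\beta} \Bigr) \right| \le C\,(R+T)\, e^{-16(m-1)\beta}.
\end{equation*}

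Next I would divide by $T$ and isolate the $T$-independent part. Dividing through, the leading term becomes $\frac{2R+2T}{T} v_\beta = 2 v_\beta + \frac{2R}{T} v_\beta$, and the correction $-16 e^{-4(4(m-1)-2)\beta}$ is divided by $T$. So the natural candidate is
\begin{equation*}
V_\beta(R) \coloneqq 2 v_\beta + \frac{1}{T}\Bigl(2R\, v_\beta - 16\, e^{-4(4(m-1)-2)\beta}\Bigr) \quad\text{---}
\end{equation*}
but that still depends on $T$. The correct move is to define $V_\beta(R) := 2 v_\beta$ and check that the remaining $T$-dependent pieces are $O(1/T)$: indeed $\bigl|{-\frac1T \log\langle W_{\gamma_{R,T}}\rangle_\beta} - 2 v_\beta\bigr| \le \frac{2R}{T} v_\beta + \frac{16}{T} e^{-4(4(m-1)-2)\beta} + C\frac{R+T}{T} e^{-16(m-1)\beta}$. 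The last term is not $O(1/T)$ uniformly — it contains a $\frac{R}{T}$ piece and, worse, a genuinely $T$-independent piece $C e^{-16(m-1)\beta}$. This forces the more honest definition $V_\beta(R) := 2 v_\beta + E(R)$, where $E(R)$ collects the $T$-independent remainder hidden inside the error term of Theorem \ref{theorem: main theorem} when specialized to these loops; concretely one re-runs the cluster expansion directly for $\gamma_{R,T}$ to see that $-\frac1T\log\langle W_{\gamma_{R,T}}\rangle_\beta$ has a genuine limit as $T\to\infty$ with a $\tfrac1T$ rate, the limit being $2v_\beta + E(R)$ with $E(R) = -16 e^{-4(4(m-1)-2)\beta}\cdot\frac{1}{\cdots}$... — i.e. the cleanest route is: observe that in the cluster expansion for $\log\langle W_{\gamma_{R,T}}\rangle_\beta$, each cluster touching $\gamma_{R,T}$ contributes an amount depending only on its position relative to $\gamma_{R,T}$, clusters far from the corners contribute a translation-invariant density (total $\propto 2R+2T$) while only finitely many (those near corners) break translation invariance; summing the per-edge densities along the two sides of length $T$ gives the coefficient of $T$, namely $2 v_\beta$ up to $O(e^{-16(m-1)\beta})$, and the corner corrections together with the two sides of length $R$ contribute a $T$-independent quantity $V_\beta(R) - 2v_\beta$, with everything else exponentially small in $T$ uniformly.

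Finally, for the large-$R$ asymptotics I would take $R \to \infty$ in $V_\beta(R)$. From the representation $V_\beta(R) = 2v_\beta + (\text{contribution of the two length-}R\text{ sides}) + (\text{corner corrections})$, the two length-$R$ sides contribute a term linear in $R$ with per-edge density $v_\beta$ plus lower order — wait, that would diverge; the resolution is that the normalization is by $T$ only, so the $R$-sides contribute $R \cdot (\text{something})$, but in fact for a \emph{perimeter law} we expect $V_\beta(R)$ to stay bounded, so the $R$-linear piece must cancel. Re-examining: in $-\frac1T\log\langle W_{\gamma_{R,T}}\rangle_\beta$ the full perimeter $2R+2T$ appears over $T$, so the $R$-part genuinely contributes $\frac{2R}{T}v_\beta \to$ something only after... no: $V_\beta(R)$ is the $T\to\infty$ limit, in which $\frac{2R}{T}v_\beta \to 0$. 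Hence $V_\beta(R) \to 2v_\beta + \lim_{R\to\infty}(\text{corner corrections})$, and the corner corrections converge as $R\to\infty$ (the four corners decouple once $R$ is large). Thus $V_\beta = 2 v_\beta + (\text{limiting corner term}) = 4 e^{-8(m-1)\beta} + 24(m-1) e^{-4(4(m-1)-2)\beta} + O(e^{-16(m-1)\beta})$, after inserting $v_\beta = 2e^{-8(m-1)\beta} + 12(m-1)e^{-4(4(m-1)-2)\beta}$ and checking the corner contribution is itself $O(e^{-16(m-1)\beta})$ so it only affects the error term. The perimeter law statement is then immediate since $V_\beta < \infty$.

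**Main obstacle.** The delicate point is the very first reduction: Theorem \ref{theorem: main theorem} as stated controls $-\frac1\ell\log\langle W_\gamma\rangle_\beta$ with an error $C e^{-16(m-1)\beta}$ that, after multiplying by $\ell = 2R+2T$ and dividing by $T$, is $O(1)$ rather than $O(1/T)$ — so Theorem \ref{theorem: main theorem} alone does \emph{not} give the $\tfrac{C}{T}$ convergence rate. One must go back into the cluster expansion machinery developed for Theorem \ref{theorem: main theorem} and extract the stronger statement that, for these specific highly symmetric loops, the non-translation-invariant part of $\log\langle W_{\gamma_{R,T}}\rangle_\beta$ is concentrated near the four corners and hence $T$-independent up to errors exponentially small in $T$. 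This "surgery on the cluster expansion near the corners", and verifying uniformity of all estimates in $R$ and $T$, is where the real work lies; the subsequent $R\to\infty$ limit and the bookkeeping of the explicit constants are routine by comparison.
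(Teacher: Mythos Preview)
Your diagnosis in the ``Main obstacle'' paragraph is exactly right, and it is also where your proposal stops being a proof: Theorem~\ref{theorem: main theorem} cannot yield the $C/T$ rate, and the several false starts above that paragraph (defining $V_\beta(R)=2v_\beta$, then $2v_\beta+E(R)$, then worrying about $R$-linear divergences) never converge to a definition of $V_\beta(R)$ or an argument for the rate. The paper fills this gap with a concrete construction that you only gesture at. It introduces a bi-infinite line $\gamma^0$ (with half-plane $q^0$) and a bi-infinite strip $\gamma^R$ of width $R$ (with $q^R$), and \emph{defines} $\hat V_\beta(R)=V_\beta(R)/2$ as an explicit absolutely convergent cluster sum: per edge of $\gamma^R$, one sums $\Psi_\beta(\mathcal{V})\bigl(1-\rho(\mathcal{V}(q^R))\bigr)$ over clusters touching that edge, weighted by the reciprocal of the number of edges of $\gamma^R$ the cluster touches. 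The key technical step is Lemma~\ref{lemma: general limit bound 0}, which compares the cluster expansion for the finite rectangle $\gamma_{R,T}$ to this per-edge density along $\gamma^R$: away from the four corners the two agree exactly by translation invariance, and the discrepancy is bounded by $\sum_j |\gamma_{c,j}|\sum_{\mathcal{V}\in\Xi_{1^+,j,e_0}}|\Psi_\beta(\mathcal{V})|$, where $|\gamma_{c,j}|\le 8j$ counts edges within distance $j$ of a corner. This sum is finite and $T$-independent, which is precisely the $O(1/T)$ rate after dividing by $|\gamma|$. The convergence $V_\beta(R)\to V_\beta$ then falls out by comparing $\gamma^R$ to $\gamma^0$ (clusters of size $<R$ do not see the second line), and only at the very end is Theorem~\ref{theorem: main theorem} invoked, solely to read off the explicit polynomial in $e^{-\beta}$.

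So your heuristic (``translation-invariant bulk plus finitely many corner terms'') is the right picture and matches the paper's strategy, but the missing idea is to realise the bulk density via an auxiliary \emph{infinite} loop rather than trying to extract it implicitly from the finite-loop error terms; without that device you have no candidate for $V_\beta(R)$ and no mechanism to show the corner contribution is $T$-independent rather than merely $o(T)$.
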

By the theorem, $V_\beta(R)$ exists for all sufficiently large $\beta$ and is bounded as $R \to \infty$ so we obtain a proof of the perimeter law. Moreover, using the convergence rate estimate we also obtain the up-to-constants estimate
\[
 \langle W_{\gamma_{R,T}} \rangle_\beta \asymp e^{-T V_\beta(R)}, \quad T \to \infty.
\]
\begin{remark}
    At fixed $R$, we have the expansion as $\beta \to \infty$
    \[
    V_\beta(R) =  4 e^{-8(m-1)\beta} + 24(m-1) e^{-4(4(m-1)-2)\beta} + O_R(1)e^{-16(m-1)\beta}.
    \]
\end{remark}

\begin{remark}
    We use a cluster expansion to prove Theorem~\ref{theorem: limit of ratio exists 2}, including the existence part. Alternatively, one could prove the existence of \( V_\beta(R) \) using  Griffith's second inequality to deduce subadditivity and then appeal to Fekete's lemma. However, this method would give no quantitative information as in the theorem. Moreover, it cannot be used to obtain Propositions~\ref{eq: proposition rectangle limit 1}~and~\ref{eq: proposition rectangle limit 2}, which shows the existence of the limit in some generality and is needed for Theorem~\ref{theorem: limit of ratio exists} below.
\end{remark}

Our next result is a version of Theorem~\ref{theorem: limit of ratio exists 2} in the setting where the two sides of the loop grow uniformly. 
\begin{theorem}\label{theorem: limit of ratio exists}
    Suppose $m \ge 3$ and  $\beta > \beta_0(m)$, let \( r,t \geq 1 \) be integers and for \( n = 1, 2, \ldots \) let \( \gamma_{n}\) be an axis-parallel rectangular loop with side lengths \( R_n = rn \) and \( T_n =tn.\)  
    Then 
    \begin{equation*}
       \lim_{n\to \infty}- \frac{1}{R_n+T_n} \log \, \langle W_{\gamma_{n}} \rangle_\beta = V_\beta,
    \end{equation*}
    where $V_\beta=\lim_{R\to \infty }V_\beta(R)$.
\end{theorem}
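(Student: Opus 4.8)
The plan is to deduce Theorem~\ref{theorem: limit of ratio exists} from Theorem~\ref{theorem: limit of ratio exists 2} together with control on the corner and bottleneck counts, exactly as in the remark preceding it which points to Propositions~\ref{eq: proposition rectangle limit 1} and~\ref{eq: proposition rectangle limit 2}. First I would observe that for the axis-parallel rectangle $\gamma_n$ with side lengths $R_n = rn$ and $T_n = tn$, the length is $\ell = 2(R_n + T_n) = 2n(r+t)$, the number of corners $\ell_c$ is a fixed constant (four corners, each contributing a bounded number of corner-edge pairs, independent of $n$), and the number of bottlenecks $\ell_b$ is likewise bounded by a constant independent of $n$ — there are no bottlenecks along the straight sides of a rectangle, only possibly near the four corners, so $\ell_c + \ell_b = O(1)$.

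Then I would apply Theorem~\ref{theorem: main theorem} to $\gamma_n$. This gives
\begin{equation*}
\left| -\frac{1}{\ell}\log\langle W_{\gamma_n}\rangle_\beta - \left( v_\beta - 4\frac{\ell_c + \ell_b}{\ell}e^{-4(4(m-1)-2)\beta}\right)\right| \le C e^{-16(m-1)\beta}.
\end{equation*}
Since $\ell = 2(R_n + T_n)$ and $\ell_c + \ell_b = O(1)$, the middle correction term is $O(1/n)$ and vanishes in the limit. Hence
\begin{equation*}
\lim_{n\to\infty} -\frac{1}{2(R_n+T_n)}\log\langle W_{\gamma_n}\rangle_\beta = v_\beta + O(e^{-16(m-1)\beta}),
\end{equation*}
so that $\lim_{n\to\infty} -\frac{1}{R_n+T_n}\log\langle W_{\gamma_n}\rangle_\beta = 2 v_\beta + O(e^{-16(m-1)\beta})$. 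Recalling $v_\beta = 2e^{-8(m-1)\beta} + 12(m-1)e^{-4(4(m-1)-2)\beta}$, we get $2v_\beta = 4e^{-8(m-1)\beta} + 24(m-1)e^{-4(4(m-1)-2)\beta}$, which matches the expansion of $V_\beta$ in Theorem~\ref{theorem: limit of ratio exists 2} up to the stated error. To conclude that the limit equals $V_\beta = \lim_{R\to\infty} V_\beta(R)$ exactly (not merely up to $O(e^{-16(m-1)\beta})$), I would instead argue more carefully: combine the quantitative estimate of Theorem~\ref{theorem: limit of ratio exists 2} for each fixed $R = R_n$ — namely $|-\frac{1}{T_n}\log\langle W_{\gamma_n}\rangle_\beta - V_\beta(R_n)| \le C/T_n$ — with the fact that $V_\beta(R) \to V_\beta$ as $R\to\infty$, but this controls $-\frac{1}{T_n}\log\langle W_{\gamma_n}\rangle_\beta$, not $-\frac{1}{R_n+T_n}\log\langle W_{\gamma_n}\rangle_\beta$; these differ by the factor $T_n/(R_n+T_n) = t/(r+t)$, which does not tend to $1$. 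So the clean route really is through Theorem~\ref{theorem: main theorem}, and the identification of the constant requires checking that the cluster expansion underlying both theorems produces consistent values — i.e. that $2v_\beta$ and the expansion of $V_\beta$ agree to all orders, which should follow from the fact that both are extracted from the same polymer model, the difference being only which geometric features of the loop are being tracked.

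The main obstacle is precisely this last point: reconciling the normalization. Theorem~\ref{theorem: main theorem} normalizes by $\ell = 2(R+T)$ and Theorem~\ref{theorem: limit of ratio exists 2} normalizes by $T$; passing to the $(R+T)$-normalization and getting the exact value $V_\beta$ (with the honest $O(e^{-16(m-1)\beta})$ error rather than an $R$-dependent one) requires that the per-edge ``bulk'' contribution to $-\log\langle W_\gamma\rangle_\beta$ be the same whether extracted along a long thin rectangle ($T\to\infty$, $R$ fixed) or along a rectangle growing in both directions. I would therefore prove that $V_\beta = 2 v_\beta$ as an identity of convergent power series in $e^{-\beta}$ (valid for $\beta > \beta_0(m)$) by tracking, in the cluster expansion, the total contribution of polymers that intersect the support of $\gamma$: in the $T\to\infty$ limit at fixed $R$, the per-$T$ rate picks up contributions from polymers near the two long sides plus a bounded end-effect, while for $\gamma_n$ the per-$(R_n+T_n)$ rate picks up the analogous contributions from all four sides; since corner and bottleneck corrections are $O(1/n)$ they disappear, leaving $2v_\beta$ in both cases, hence the common value $V_\beta$. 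With that identity in hand, Theorem~\ref{theorem: limit of ratio exists} follows immediately from Theorem~\ref{theorem: main theorem} applied to $\gamma_n$ and the observation $\ell_c(\gamma_n) + \ell_b(\gamma_n) = O(1)$.
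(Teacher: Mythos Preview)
Your approach has a genuine gap: Theorem~\ref{theorem: main theorem} alone cannot deliver either the existence of the limit or its exact value. For fixed $\beta$, the error term $Ce^{-16(m-1)\beta}$ in~\eqref{eq: main theorem} is a \emph{constant}, not something that shrinks as $n\to\infty$. So after sending $n\to\infty$ (and letting the $(\ell_c+\ell_b)/\ell$ term vanish) you only learn that every accumulation point of $-\frac{1}{R_n+T_n}\log\langle W_{\gamma_n}\rangle_\beta$ lies in the fixed interval $[2v_\beta - 2Ce^{-16(m-1)\beta},\, 2v_\beta + 2Ce^{-16(m-1)\beta}]$. That establishes neither convergence nor the identity with $V_\beta$. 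Your attempted repair, proving ``$V_\beta = 2v_\beta$ as an identity of convergent power series'', is false as written: $v_\beta$ is by definition the two-term polynomial $2e^{-8(m-1)\beta}+12(m-1)e^{-4(4(m-1)-2)\beta}$, and the paper explicitly records $V_\beta = 2v_\beta + O(e^{-16(m-1)\beta})$, not equality.

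The paper instead goes through Proposition~\ref{eq: proposition rectangle limit 1} (which you cite in your opening sentence but then abandon). That proposition, proved directly from the cluster expansion via Lemma~\ref{lemma: general limit bound 0}, shows that $-\frac{1}{|\gamma_n|}\log\langle W_{\gamma_n}\rangle_\beta$ converges to the explicit cluster sum $\hat V_\beta$, with a quantitative rate. Proposition~\ref{eq: proposition rectangle limit 2} then identifies $\hat V_\beta = \lim_{R\to\infty}\hat V_\beta(R)$, and since $V_\beta(R)=2\hat V_\beta(R)$ by definition there, one gets $V_\beta = 2\hat V_\beta$. The normalization issue you flagged is handled automatically because both $\hat V_\beta$ and $\hat V_\beta(R)$ are defined as per-edge quantities from the same cluster expansion. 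Theorem~\ref{theorem: main theorem} is invoked only at the very end, to read off the first terms of the expansion, not to prove existence or to identify the constant.
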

Note that $V_\beta = 2v_\beta +  O(e^{-16(m-1)\beta})$. We have chosen to state Theorem~\ref{theorem: limit of ratio exists} for a rectangle but with small modifications the proof is also valid for any fixed loop for which the proportion of corners and bottlenecks in the scaled loop tend to zero as \( n \to \infty\) and the corresponding $V_\beta$ is the same. 

\begin{remark}
It would be interesting to relate the confinement phase transition to analyticity properties of the functions $\beta \mapsto F(\beta)$ and $\beta \mapsto V_\beta$. 
\end{remark}

\subsection{Related work and further comments}\label{sec: other work}
  We refer to \cite{c2018} for a thorough discussion of classical works on area-/perimeter law estimates in various settings, including \cite{os1978, g1980, g1980b, fs1982, gm1982, seiler}. Among more recent results, we mention \cite{gs2023}, which considers the 4-dimensional $U(1)$ theory with Villain action. In the perimeter law regime, for sufficiently regular loops, it was shown that
    \[
    \frac{C_{0}}{2\beta}(1+C \beta e^{-2\pi^2\beta})(1 + o(1)) \le - \frac{1}{|\gamma|}\log |\mathbb{E}_{\beta}^{Vil}[W_\gamma]| \le  \frac{C_{0}}{2\beta}(1+\epsilon(\beta))(1 + o(1)),
    \]
    where the upper bound was due to Fr\"ohlich and Spencer \cite{fs1982}. Here $C_0$ is a constant related to the discrete Gaussian free field. The infinite volume free energy for this model was also considered in \cite{gs2023}, and an upper bound was obtained for the ``internal energy'', i.e., its derivative with respect to $\beta$.  
    
In the important paper \cite{c2019}, Chatterjee studied 4-dimensional Ising lattice gauge theory. Using a resampling argument, it was proved that  
\begin{equation}\label{eq: easy upper bound}
    \langle W_\gamma \rangle_\beta \leq e^{-\frac{1-\ell_c/\ell}{1 + e^{-4(m-1)\beta}} \, 2\ell e^{-4(m-1)\beta}}.
\end{equation} 
(We caution that \( 2\beta\) in the present paper is equal to the parameter \( \beta\) used in \cite{c2019}.) This estimate is valid for all \( \beta > 0\).
Using~\eqref{eq: easy upper bound}, the inequalities
\begin{equation}\label{eq: ci1}
    \bigl| \langle W_\gamma \rangle_\beta - e^{-2\ell e^{-8(m-1) \beta}} \bigr| \leq
    Ce^{\frac{- 2}{1 + e^{-16(m-1)\beta}} \,\ell e^{-8(m-1)\beta}}\bigl( e^{-8\beta} + \sqrt{\ell_c/\ell} \bigr)
\end{equation}
and
\begin{equation*}
    \bigl| \langle W_\gamma \rangle_\beta - e^{-2\ell e^{-8(m-1) \beta}} \bigr| \leq C_1 \bigl( e^{-8\beta} + \sqrt{\ell_c/\ell} \bigr)^{C_2}
\end{equation*}
were obtained. The ideas introduced in \cite{c2019} spurred several recent works and analogous estimates have now been given in more general settings, including for arbitrary finite structure groups and for corresponding lattice Higgs models, see~\cite{a2021, sc2019, flv2021, f2021b, flv2020}.
The methods used in these papers produce error terms that will generally be larger than the estimate for \( \langle W_\gamma \rangle \) if one does not have a relation of the type \( \ell e^{-2\ell e^{-8(m-1)\beta}} \ll \infty.\) That is, one needs the size of the loop to tend to infinity at a rate tuned to $\beta \to \infty$. (Of course, one sees different exponents for different choices of structure group $G$; this case corresponds to Ising lattice gauge theory.) As a consequence, it is not clear (to us) how to use those methods to prove a perimeter law (lower bound) estimate at fixed $\beta$ or, e.g., how to analyze limits such as the one defining the quark potential. Moreover, we do not know how they can easily be modified to obtain higher precision even if the loop grows with $\beta$ at an appropriate rate. 

Here we instead carry out the analysis based on the cluster expansion of the partition function, which provides information on $ \log \, \langle W_\gamma \rangle_\beta$. One still needs $\beta$ to be sufficiently large, but it does not need to grow with \( \ell \) for the error bounds to be small, and the drawbacks discussed above can be circumvented. The method yields, in principle, arbitrary precision for the logarithm of the Wilson loop expectation and also allows to quantify the behavior of \( \langle W_\gamma \rangle \) when \( \ell e^{-8(m-1)\beta} \ll \infty.\) This partly resolves one of the open problems in~\cite{c2019}. However, the work here does not directly imply the results of~\cite{c2019} but do give alternative proofs of several of the key lemmas therein. 
 
The use of the cluster expansions in the context of lattice gauge theories context is certainly not new, see in particular Seiler's monograph \cite{seiler} (and the references therein) where, e.g., perimeter law estimates to first order for large $\beta$ were obtained. However, besides basic facts about the cluster expansion as presented in the recent textbook of Friedly and Velenik \cite{fv2017} and some results from~\cite{flv2020, f2021b}, our discussion is self-contained, and we carry out all the needed estimates here.

We only consider $G = \mathbb{Z}_2$ in this paper. We expect that one can extend the results to any finite group $G = \mathbb{Z}_k, k \ge 3,$ without much additional effort, as well as to the corresponding lattice Higgs models. It also seems plausible that, with more work, any finite $G$ can be analyzed similarly. The cluster expansion based on vortices uses crucially that gauge field configurations can be split into discrete components. Therefore we do not expect the methods in this paper to work in the general case of compact subgroups of $U(N)$.

\subsection{Acknowledgements} M.P.F. acknowledges support from the Ruth and Nils-Erik Stenb\"ack Foundation.  F.V. acknowledges support from the Knut and Alice Wallenberg Foundation and the Swedish Research Council. We thank Juhan Aru and Christophe Garban for interesting discussions.

\section{Preliminaries}
Even though we later work with $G = \mathbb{Z}_2$, in this section we allow $G$ to be a general finite abelian group since this entails no additional work.
\subsection{Notation and standing assumptions}
In the rest of this paper, we assume that \( m \geq 3 \) is given. We define the dimension dependent constant
\[
M = M(m) \coloneqq10(m-2).
\]
We note that with this notation, we have 
\begin{equation*} 
        \beta_0 = \beta_0(m) = 2^{-1} \log M + 6^{-1}   .
\end{equation*}

\subsection{Discrete exterior calculus}
In order to keep the presentation short, and since these definitions have appeared in several recent papers, we will refer to \cite{flv2020} for details on some of the basic notions of discrete exterior calculus that is useful in the present context. 

\begin{itemize}
\item  We will work with the square lattice $\mathbb{Z}^m$, where we assume that the dimension $m \ge 3$ throughout. We write $B_N = [-N,N]^m \cap \mathbb{Z}^m$.
\item{We write $C_k(B_N)$ and $C_k(B_N)^+$ for the set of unoriented and positively oriented $k$-cells, respectively (see \cite[Sect. 2.1.2]{flv2020}). Note that in the introduction, we used \( V_N = C_0(B_N),\) \( E_N = C_1(B_N) ,\) and \( P_N = C_2(B_N) . \) An oriented $2$-cell is called a plaquette.}
\item{Formal sums of positively oriented \( k \)-cells with integer coefficients are called $k$-chains, and the space of $k$-chains is denoted by \( C_k(B_N,\mathbb{Z}) \), (see \cite[Sect. 2.1.2]{flv2020})}
\item{Let \( k \geq 2 \) and \( c = \frac{\partial}{\partial x^{j_1}}\big|_a \wedge \dots \wedge \frac{\partial}{\partial x^{j_k}}\big|_a \in C_k(B_N)\). The \emph{boundary} of $c$ is the $(k-1)$-chain \(\partial c \in C_{k-1}(B_N, \mathbb{Z})\) defined as the formal sum of the \( (k-1)\)-cells in the (oriented) boundary of \( c.\) 
The definition is extended to $k$-chains by linearity. See \cite[Sect.~2.1.4]{flv2020}..}
\item{If \( k \in \{ 0,1, \ldots, n-1 \} \) and \( c \in C_k(B_N)\) is an oriented \( k \)-cell, we define the \emph{coboundary} \( \hat \partial c \in C_{k+1}(B_N)\) of \( c \) as the \( (k+1) \)-chain $\hat \partial c \coloneqq \sum_{c' \in C_{k+1}(B_N)} \bigl(\partial c'[c] \bigr) c'.$ See \cite[Sect.~2.1.5]{flv2020}.}

\item{We let $\Omega^k(B_N, G)$ denote the set of $G$-valued (discrete differential) $k$-forms (see \cite[Sect 2.3.1]{flv2020}); the exterior derivative $d : \Omega^k(B_N, G)  \to \Omega^{k+1}(B_N, G)$ is defined for $0 \le k \le m-1$ (see \cite[Sect. 2.3.2]{flv2020}) and $\Omega^k_0(B_N, G)$ denotes the set of closed $k$-forms, i.e., $\omega \in \Omega^k(B_N, G)$ such that $d\omega = 0$.}
\item{We write $\support \omega = \{c \in C_k(B_N): \omega(c) \neq 0\}$ for the support of a $k$-form $\omega$. Similarly, we write $(\support \omega)^+ = \{c \in C_k(B_N)^+: \omega(c) \neq 0\}$}
\item{
    A 1-chain \( \gamma \in C_1(B_N,\mathbb{Z}) \) with finite support \( \support \gamma \) is called a \emph{loop} if 
    for all \( e \in \Omega^1(B_N) \), we have that \( \gamma[e] \in \{ -1,0,1 \} \), and \( \partial \gamma = 0. \) We write \( |\gamma| = |\support \gamma|.\) (In \cite{flv2020} this object was called a generalized loop.)
}

\item  Let \( \gamma \in C_1(B_N,\mathbb{Z}) \) be a loop. A \( 2 \)-chain \( q \in C_2(B_N,\mathbb{Z}) \) is an \emph{oriented surface} with \emph{boundary} \( \gamma \) if \( \partial q = \gamma. \) 
\end{itemize}

\subsection{Plaquette adjacency graph}\label{sec: the graphs}

Let \( \mathcal{G}_2 \) be the graph with vertex set \( C_2(B_N)^+\) and an edge between two distinct vertices \( p_1,p_2 \in C_2(B_N)^+\) if and only if \( \support \hat \partial p_1 \cap \support \hat \partial p_2 \neq \emptyset.\) 

Since any plaquette \( p \in C_2(B_N)^+\) in \( B_N\) is in the boundary of at most \( 2(m-2)\) 3-cells, and any such 3-cell has exactly five plaquettes in its boundary that are not equal to \( p,\) it follows that there are at most \(  5 \cdot 2(m-2) = 10(m-2) = M \) plaquettes \( p' \in C_2(B_N)^+\smallsetminus \{ p \} \) with \( {\support \hat \partial p \cap \support \hat \partial p' \neq \emptyset.}\) Therefore it follows that each vertex in \( \mathcal{G}_2 \) has degree at most \( M.\)

\subsection{Vortices}

\begin{definition}[Vortex]
    A closed 2-form \( \nu \in \Omega^2_0(B_N,G)\) is said to be a \emph{vortex} if \( (\support \nu)^+\) induces a connected subgraph of \( \mathcal{G}_2\). 
\end{definition}

The set of all vortices in \( \Omega^2(B_N,G)\) is denoted by \( \Lambda.\) 
We note that the definition of vortex we use here is not exactly the same as the definition used in~\cite{f2021,f2021b,flv2020,flv2021}, but agrees with the definition used in~\cite{sc2019, c2019}.

When \( \omega,\nu \in \Omega^2(B_N,G),\) we say that \( \nu \) is a vortex in \( \omega \) if  \(  \nu\) is a vortex and \( \support \nu \) corresponds to a connected subgraph of the subgraph of \( \mathcal{G}_2\) induced by \( \support \omega. \) 

\begin{lemma}[The Poincar\'e lemma, Lemma 2.2 in~\cite{c2019}]\label{lemma: poincare}
    Let \( k \in \{ 1, \ldots, m\} \) and let \( B \) be a box in \( \mathbb{Z}^m \). Then the exterior derivative \( d \) is a surjective map from the set \( \Omega^{k-1}(B \cap \mathbb{Z}^m, G) \) to \( \Omega^k_0(B \cap \mathbb{Z}^m, G) \).
    Moreover, if \(G\) is finite, then this map is an \( \bigl| \Omega^{k-1}_0(B \cap \mathbb{Z}^m, G)\bigr|\)-to-\(1\) correspondence.
    Lastly, if \( k \in \{ 1,2, \ldots, m-1 \} \) and \(\omega \in \Omega^k_0(B \cap \mathbb{Z}^m, G)\) vanishes on the boundary of \(B\), then there is a \((k-1)\)-form \( \omega' \in \Omega^{k-1}(B \cap \mathbb{Z}^m, G)\) that also vanishes on the boundary of \(B\) and satisfies \(d\omega' = \omega\). 
\end{lemma}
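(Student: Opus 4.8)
The plan is to reduce the fiber-counting claim to plain surjectivity and then prove surjectivity by a discrete contracting-homotopy argument with induction on $m$. Since $G$ is abelian, $d \colon \Omega^{k-1}(B, G) \to \Omega^k(B, G)$ is a homomorphism of abelian groups with $d \circ d = 0$; hence its image is contained in $\Omega^k_0(B, G)$ and its kernel is exactly $\Omega^{k-1}_0(B, G)$. When $G$ is finite it follows that $d$ is $|\Omega^{k-1}_0(B, G)|$-to-$1$ onto its image, so both the surjectivity assertion and the counting assertion reduce to showing that the image is all of $\Omega^k_0(B, G)$ --- and this I would argue for any abelian $G$.

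For surjectivity I would induct on $m$. Write $B = I \times B'$, where $I \subseteq \mathbb{Z}$ is the range of the first coordinate and $B' \subseteq \mathbb{Z}^{m-1}$ the range of the remaining ones, and introduce the discrete prism (homotopy) operator $K \colon \Omega^k(B,G) \to \Omega^{k-1}(B,G)$ that integrates a form along the first coordinate direction: $K\omega$ vanishes on every $(k-1)$-cell containing an edge in direction $1$, and on a cell $\{j\} \times \bar c$ sitting in the slice with first coordinate $j$ it equals the sum of the values $\omega([i,i+1]_1 \times \bar c)$ over those $i \in I$ lying below $j$. A direct computation with the boundary operator gives the homotopy identity $dK\omega + Kd\omega = \omega - R\omega$, where $R\omega$ is the $k$-form that vanishes on cells containing a direction-$1$ edge and on every slice agrees with the restriction of $\omega$ to the first slice. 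For closed $\omega$ this reads $\omega = d(K\omega) + R\omega$ with $R\omega$ closed; moreover $R\omega$ is independent of the first coordinate and has no direction-$1$ component, so it is canonically the image of a closed $k$-form $\bar\omega \in \Omega^k_0(B',G)$ on a box in $\mathbb{Z}^{m-1}$. By the inductive hypothesis $\bar\omega = d\bar\eta$; lifting $\bar\eta$ to the slice-independent, direction-$1$-free $(k-1)$-form $\eta'$ on $B$ gives $d\eta' = R\omega$, whence $\omega = d(K\omega + \eta')$. The base case $m = 1$ is immediate, since on an interval every $1$-form is closed and equals $d$ of its running sum, and there are no $k$-forms for $k \ge 2$.

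For the final assertion, suppose $k \le m-1$ and $\omega \in \Omega^k_0(B,G)$ vanishes on $\partial B$. Using surjectivity, pick $\omega'' \in \Omega^{k-1}(B,G)$ with $d\omega'' = \omega$, and then correct it by an exact form so that it, too, vanishes on $\partial B$. As $\partial B$ is a subcomplex, restriction commutes with $d$, so $\tau \coloneqq \omega''|_{\partial B}$ is a closed $(k-1)$-form on $\partial B$, which is combinatorially an $(m-1)$-sphere. When $2 \le k \le m-1$, so that $0 < k-1 < m-1$, one has $\tau = d\rho$ on $\partial B$ by the Poincar\'e lemma for the sphere --- itself provable by a Mayer--Vietoris induction, writing $S^{m-1}$ as two hemispherical boxes glued along an equatorial box; extending $\rho$ by zero to a form $\tilde\rho$ on $B$, the form $\omega' \coloneqq \omega'' - d\tilde\rho$ satisfies $d\omega' = \omega$ and $\omega'|_{\partial B} = \tau - d\rho = 0$. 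The case $k = 1$ is handled separately: $\tau$ is then a locally constant $G$-valued function on the connected complex $\partial B$, hence a constant $g_0$, and $\omega' \coloneqq \omega'' - g_0$ works.

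I expect the main obstacle to be this last step --- the boundary bookkeeping, and in particular establishing the vanishing cohomology of the boundary sphere in the relevant degrees in a self-contained way rather than quoting it. A secondary, routine-but-error-prone task is fixing the signs and orientation conventions in the homotopy identity $dK\omega + Kd\omega = \omega - R\omega$; the remaining topological inputs about boxes --- connectedness and simple connectedness --- are standard and can be folded into the same induction on $m$.
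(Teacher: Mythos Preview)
The paper does not supply its own proof of this lemma; it is quoted verbatim as Lemma~2.2 of~\cite{c2019} and left unproved, so there is nothing in the paper to compare your argument against.

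That said, your outline is the standard one and is correct. The reduction of the fiber-counting statement to surjectivity via $\ker d = \Omega^{k-1}_0$ is immediate. The prism operator $K$ and the homotopy identity $dK + Kd = \mathrm{id} - R$ are the discrete analogue of the classical contracting-homotopy proof; the point that $R\omega$ has no direction-$1$ component and is slice-independent, hence descends to a closed form on the $(m-1)$-dimensional box $B'$, is exactly what drives the induction, and the base case $m=1$ is as trivial as you say.

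The boundary step is also sound. Restriction to $\partial B$ commutes with $d$ because $\partial B$ is a full subcomplex, so $\tau$ is closed; your extension-by-zero works because every face of a $(k-1)$-cell $c \subset \partial B$ again lies in $\partial B$, giving $(d\tilde\rho)|_{\partial B} = d\rho$. The only genuinely nontrivial input you are deferring is the vanishing of the discrete cohomology of the combinatorial $(m-1)$-sphere in degrees $1,\dots,m-2$; the Mayer--Vietoris induction you sketch (two hemispherical boxes glued along an equatorial box) is the right way to do this self-containedly, but making it precise in the cellular setting does require some care with the overlap. You have correctly identified this and the sign bookkeeping in the homotopy identity as the two places where a full write-up would need attention.
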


\begin{lemma}[Lemma~2.4 of~\cite{f2021}]\label{lemma: small vortices}
    Let \( \omega \in \Omega_0^2(B_N,G). \)  If \( \omega \neq 0 \) and the support of \( \omega \) does not contain any boundary plaquettes of \( B_N, \) then either \( \bigl| (\support \omega)^+ \bigr| = 2(m-1) \), or \( \bigl|(\support \omega)^+| \geq 4(m-1)-2. \)
\end{lemma}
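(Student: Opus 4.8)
The goal is a purely combinatorial dichotomy for the size of the support of a nonzero closed $2$-form $\omega$ on $B_N$ whose support avoids the boundary plaquettes. The plan is to work entirely on the plaquette adjacency graph $\mathcal{G}_2$ and exploit the constraint $d\omega = 0$. Recall that for a $2$-form $\omega$, $d\omega = 0$ means that for every $3$-cell $c$, the sum $\sum_{p \in \partial c} \omega(p) = 0$ in $G$ (with appropriate orientation signs). First I would fix a plaquette $p_0 \in (\support \omega)^+$ and pick a coordinate $3$-cell $c$ containing $p_0$; since $\omega(p_0) \neq 0$ and the signed sum over the six faces of $c$ vanishes, at least one more face of $c$ must lie in $\support \omega$. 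In fact, on any single $3$-cell the configuration of faces in $\support\omega$ cannot be a single plaquette, which already forces $|(\support\omega)^+| \geq 2$ and more. The quantity $2(m-1)$ is exactly the number of plaquettes in the coboundary $\hat\partial e$ of a single edge $e$ (an edge is in the boundary of $2(m-1)$ plaquettes in $\mathbb{Z}^m$), and the minimal vortices are precisely those of the form $\omega = \widehat{d\eta}$ for $\eta$ supported on a single edge, i.e. the ``elementary'' closed forms $\hat\partial e$ viewed with $G$-coefficients.

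The key steps I would carry out, in order: (1) Show that if $\omega \neq 0$ is closed and small, then after a translation its support, as a subgraph of $\mathcal{G}_2$, is connected — or rather, reduce to a connected component, since the bound is about a single vortex and any closed form is a sum of vortices on disjoint plaquette-clusters; each nonzero vortex itself satisfies $d(\text{restriction}) = 0$. (2) Localize: take an edge $e$ in the ``$1$-skeleton boundary'' of $\support\omega$, meaning an edge lying in the coboundary of some $p \in \support\omega$ but chosen extremally (e.g. maximizing some coordinate, or lexicographically extremal among all vertices/edges touched by $\support\omega$). Because $\support\omega$ avoids $\partial B_N$, the full coboundary $\hat\partial e$ (all $2(m-1)$ plaquettes) sits inside $B_N$. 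The closedness condition applied to the $3$-cells containing $e$ will force a rigidity statement: at such an extremal edge, the values $\omega(p)$ for $p \in \hat\partial e$ cannot all vanish, and the local equations $d\omega=0$ at the $(m-1)$ three-cells through $e$ that ``point inward'' constrain them. (3) Case split according to whether $\support\omega$ contains the \emph{entire} coboundary $\hat\partial e$ of such an extremal edge. If it does, then $|(\support\omega)^+| \geq 2(m-1)$ trivially, and one argues that if it is \emph{exactly} $2(m-1)$ then $\support\omega = (\support\hat\partial e)^+$ and $\omega = c\,\hat\partial e$ for a constant $c$ (using that any plaquette outside $\hat\partial e$ would be forced in by the same local argument). (4) If $\support\omega$ does \emph{not} contain a full coboundary of any extremal edge, show the support must be strictly larger, and carefully count to get the bound $\geq 4(m-1) - 2$. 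The number $4(m-1)-2$ should arise as roughly ``two overlapping elementary coboundaries sharing one plaquette'': $2 \cdot 2(m-1) - 2$.

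The main obstacle I expect is step (4): ruling out all intermediate sizes strictly between $2(m-1)$ and $4(m-1)-2$. This requires a genuinely combinatorial argument showing that the local constraint $d\omega = 0$ is ``stiff'' — once you are forced to have a plaquette in $\support\omega$ that is not part of a complete single-edge coboundary, a cascade of forced plaquettes appears and you overshoot past the intermediate range. Concretely, I would argue: suppose $2(m-1) < |(\support\omega)^+| < 4(m-1)-2$. Pick an extremal vertex $v$ of $\support\omega$ (extremal in, say, the first coordinate, then second, etc.) and examine the plaquettes of $\support\omega$ incident to $v$. Extremality means $\omega$ vanishes on all plaquettes ``beyond'' $v$, which via the $3$-cell relations at $v$ forces the plaquettes at $v$ within $\support\omega$ to themselves form (a $G$-multiple of) a complete coboundary-like configuration — but a complete such configuration at a vertex already has $2(m-1)$ plaquettes, and any \emph{additional} plaquette then propagates a second such configuration at a neighboring vertex, the two overlapping in at most the plaquettes spanned by the shared edge, which forces $|(\support\omega)^+| \geq 2(m-1) + (2(m-1) - (\text{overlap}))$; checking that the maximal overlap of two distinct ``vertex stars'' or ``edge coboundaries'' of this type is exactly $2$ plaquettes then yields the clean bound $4(m-1) - 2$. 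Making ``extremal vertex / extremal edge forces a complete local coboundary'' precise — this is essentially a discrete Poincaré-type statement in a half-space, and one may want to invoke the last part of Lemma~\ref{lemma: poincare} (the vanishing-on-boundary version) applied to a suitable sub-box — is where the real care is needed; everything else is bookkeeping.
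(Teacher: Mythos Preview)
The paper does not actually supply a proof of this lemma: it is quoted as Lemma~2.4 of~\cite{f2021}, and immediately after the statement the authors write that the proof (given there only for \(m=4\)) is omitted because the general case is analogous. So there is no in-paper argument to compare against; one can only assess whether your plan is plausible and whether it matches the spirit of the cited external proof.

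Your outline is the right shape: reduce to a connected component, pick an extremal plaquette or edge, and use \(d\omega=0\) on the adjacent \(3\)-cells to force either the full edge-coboundary \(\hat\partial e\) (giving \(2(m-1)\)) or a second nearby edge-coboundary (giving the next size). One point to tighten: your accounting for the ``\(-2\)'' is slightly off. Two distinct edge-coboundaries \(\hat\partial e\) and \(\hat\partial e'\) with \(e,e'\) sharing a plaquette overlap in exactly \emph{one} plaquette, not two; the \(4(m-1)-2\) arises because on that shared plaquette the two contributions cancel in \(G\), so the support is the symmetric difference, of size \(2\cdot 2(m-1)-2\cdot 1\). This is consistent with the paper's Figure~\ref{table: minimal configurations}(b)--(c) and with Lemma~\ref{lemma: minimal 2}, which says the size-\((4(m-1)-2)\) vortices are exactly \(d\sigma\) for \(\sigma\) supported on two edges whose coboundaries meet. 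Your identification of the main obstacle (making the extremality/cascade argument precise to rule out intermediate sizes) is accurate, and invoking the boundary-vanishing clause of Lemma~\ref{lemma: poincare} on a well-chosen half-box is a sound way to do it; this is presumably also how~\cite{f2021} proceeds.
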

In~\cite{f2021}, we proved Lemma~\ref{lemma: small vortices} only in the case \( m = 4,\) but since the proof for general \( m \geq 2\) is analogous we do not include it here.

In Figure~\ref{table: minimal configurations}, we illustrate the only two possibilities for \( (\support \omega)^+\) if \( \bigl| (\support \omega)^+ \bigr| = 4(m-1)-1\) when \( m = 4.\) For general \( m \geq 2,\) the situation is analogous.
\begin{figure}[ht]
    \centering
    \begin{tabular}{m{1cm} p{2.5cm} p{4cm} m{2cm}}
         & \( (\support \sigma)^+ \) & \(  (\support d\sigma)^+ \) &  
         \( \bigl| (\support \omega)^+ \bigr| \)
        \\[0.5ex] \toprule \\[-1ex]
        \hfil (a) \hfil \vspace{7.5ex} & \begin{tikzpicture}
            \draw[white] (-0.5,1.3) -- (0.5,-0.3);
            \draw[thick, detailcolor00] (0,0) -- (0,1);
        \end{tikzpicture} & 
        \begin{tikzpicture}
            \filldraw[fill=detailcolor07, fill opacity=0.18] (0,0) -- (1,0) -- (1,1) -- (0,1) -- (0,0);
            \filldraw[fill=detailcolor07, fill opacity=0.18] (0,0) -- (0,1) -- (-1,1) -- (-1,0) -- (0,0);
            \filldraw[fill=detailcolor07, fill opacity=0.18] (0,0) -- (0.55,-0.3) -- (0.55,0.7) -- (0,1) -- (0,0);
            \filldraw[fill=detailcolor07, fill opacity=0.18] (0,0) -- (0,1) -- (-0.55,1.3) -- (-0.55,0.3) -- (0,0);
            \filldraw[fill=detailcolor07, fill opacity=0.18] (0,0) -- (0.7,0.25) -- (0.7,1.25) -- (0,1) -- (0,0);
            \filldraw[fill=detailcolor07, fill opacity=0.18] (0,0) -- (0,1) -- (-0.7,0.75) -- (-0.7,-0.25) -- (0,0);
            \draw[thick, detailcolor00] (0,0) -- (0,1);
        \end{tikzpicture} &  
        \hfil \(2(m-1)\) \hfil \vspace{8.5ex}
        \\  
        \hfil (b) \hfil \vspace{7.5ex} & 
        \begin{tikzpicture}
            \draw[white] (0,1.3) -- (0,-0.3);
            
            \draw[thick, detailcolor00] (0,0) -- (0,1);
            \draw[thick, detailcolor00] (1,0) -- (1,1);
        \end{tikzpicture} & 
        \begin{tikzpicture}  
            
            \filldraw[fill=detailcolor07, fill opacity=0.18] (0,0) -- (0,1) -- (-1,1) -- (-1,0) -- (0,0);
            \filldraw[fill=detailcolor07, fill opacity=0.18] (0,0) -- (0.55,-0.3) -- (0.55,0.7) -- (0,1) -- (0,0);
            \filldraw[fill=detailcolor07, fill opacity=0.18] (0,0) -- (0,1) -- (-0.55,1.3) -- (-0.55,0.3) -- (0,0);
            \filldraw[fill=detailcolor07, fill opacity=0.18] (0,0) -- (0.7,0.25) -- (0.7,1.25) -- (0,1) -- (0,0);
            \filldraw[fill=detailcolor07, fill opacity=0.18] (0,0) -- (0,1) -- (-0.7,0.75) -- (-0.7,-0.25) -- (0,0);
            
            \filldraw[fill=detailcolor07, fill opacity=0.18] (1,0) -- (2,0) -- (2,1) -- (1,1) -- (1,0);
            \filldraw[fill=detailcolor07, fill opacity=0.18] (1,0) -- (1.55,-0.3) -- (1.55,0.7) -- (1,1) -- (1,0);
            \filldraw[fill=detailcolor07, fill opacity=0.18] (1,0) -- (1,1) -- (0.45,1.3) -- (0.45,0.3) -- (1,0);
            \filldraw[fill=detailcolor07, fill opacity=0.18] (1,0) -- (1.7,0.25) -- (1.7,1.25) -- (1,1) -- (1,0);
            \filldraw[fill=detailcolor07, fill opacity=0.18] (1,0) -- (1,1) -- (0.3,0.75) -- (0.3,-0.25) -- (1,0);
            
            \draw[thick, detailcolor00] (0,0) -- (0,1);
            \draw[thick, detailcolor00] (1,0) -- (1,1);
            
        \end{tikzpicture} &  
        \hfil \(4(m-1)-2\) \hfil \vspace{5.5ex}
        \\  
        \hfil (c) \hfil \vspace{14.5ex} & 
        \begin{tikzpicture}
            \draw[white] (0.,1.8) -- (0.,-0.9);
            
            \draw[thick, detailcolor00] (1,0) -- (0,0) -- (0,1);
        \end{tikzpicture} &  
        \begin{tikzpicture}
         
            \draw[white] (0,1.3) -- (0,-1.3);  
            
            \filldraw[fill=detailcolor07, fill opacity=0.18] (0,0) -- (0,1) -- (-1,1) -- (-1,0) -- (0,0);
            \filldraw[fill=detailcolor07, fill opacity=0.18] (0,0) -- (0.55,-0.3) -- (0.55,0.7) -- (0,1) -- (0,0);
            \filldraw[fill=detailcolor07, fill opacity=0.18] (0,0) -- (0,1) -- (-0.55,1.3) -- (-0.55,0.3) -- (0,0);
            \filldraw[fill=detailcolor07, fill opacity=0.18] (0,0) -- (0.7,0.25) -- (0.7,1.25) -- (0,1) -- (0,0);
            \filldraw[fill=detailcolor07, fill opacity=0.18] (0,0) -- (0,1) -- (-0.7,0.75) -- (-0.7,-0.25) -- (0,0); 
            
            \filldraw[fill=detailcolor07, fill opacity=0.18] (0,0) -- (1,0) -- (1,-1) -- (0,-1) -- (0,0);
            \filldraw[fill=detailcolor07, fill opacity=0.18] (0,0) -- (0.55,-0.3) -- (1.55,-0.3) -- (1,0) -- (0,0);
            \filldraw[fill=detailcolor07, fill opacity=0.18] (0,0) -- (-0.55,0.3) -- (0.45,0.3) -- (1,0) -- (0,0);
            \filldraw[fill=detailcolor07, fill opacity=0.18] (0,0) -- (-0.7,-0.25) -- (0.3,-0.25) -- (1,0) -- (0,0);
            \filldraw[fill=detailcolor07, fill opacity=0.18] (0,0) -- (0.7,0.25) -- (1.7,0.25) -- (1,0) -- (0,0);
            
            \draw[thick, detailcolor00] (1,0) -- (0,0) -- (0,1);
            
        \end{tikzpicture} &
        \hfil  \(4(m-1)-2\) \hfil \vspace{14.5ex}
\end{tabular} 
\vspace{-8ex}
\caption{The above table shows projections of the supports of the  non-trivial and irreducible plaquette configurations in \( \mathbb{Z}^4 \) which has the smallest support (up to translations and rotations).%
}\label{table: minimal configurations}
\end{figure}

\begin{lemma}[Lemma~4.6 in~\cite{flv2020}]\label{lemma: minimal vortices}
    Let \( \omega \in \Omega_0^2(B_N,G) \).  If the support of \( \omega \) does not contain any boundary plaquettes of \( B_N \) and \( \bigl| (\support \omega)^+ \bigr| = 2(m-1)\), then there is an edge \( e \in C_1(B_N) \) and \( g \in G\smallsetminus \{ 0 \} \) such that 
        \begin{equation}\label{eq: minimal vortex}
            \omega = d\bigl(g \, e \bigr). 
        \end{equation} 
\end{lemma}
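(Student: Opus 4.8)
The plan is to use the Poincaré lemma (Lemma~\ref{lemma: poincare}, last part) to write $\omega = d\omega'$ for some $1$-form $\omega' \in \Omega^1(B_N,G)$ vanishing on the boundary of $B_N$, and then argue that the support of $\omega'$ can be taken to be a single edge. First I would invoke Lemma~\ref{lemma: poincare}: since $\omega \in \Omega^2_0(B_N,G)$ vanishes on the boundary plaquettes of $B_N$ (because $\support \omega$ contains no boundary plaquettes), there exists $\omega' \in \Omega^1(B_N,G)$ vanishing on $\partial B_N$ with $d\omega' = \omega$. The content of the lemma is then that among all such primitives, one can be chosen supported on a single edge.

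The key geometric input is the structure of $(\support \omega)^+$ when $|(\support \omega)^+| = 2(m-1)$, which is exactly configuration (a) in Figure~\ref{table: minimal configurations}: the $2(m-1)$ plaquettes are precisely the plaquettes containing a fixed edge $e$ in their boundary (there are $2(m-1)$ such plaquettes in $\mathbb{Z}^m$, namely $\hat\partial e$). So the natural candidate is $\omega = d(g\,e)$ for a suitable $g \in G \smallsetminus\{0\}$. To see that $\support \omega$ must have this form, I would argue as follows: the support of $\omega$, being a connected vortex of minimal size, must "close up" — since $\omega$ is closed, $d\omega = 0$ forces a local constraint at every $3$-cell, namely that the signed sum of $\omega$ over the plaquettes in the boundary of each $3$-cell vanishes. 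A $3$-cell meeting $\support\omega$ in a single plaquette would violate this; hence each $3$-cell either misses $\support\omega$ or meets it in at least two plaquettes. Combined with connectivity in $\mathcal{G}_2$ and the count $2(m-1)$, one deduces that the plaquettes all share a common edge $e$ and form exactly $\hat\partial e$. This is essentially the classification already used to prove Lemma~\ref{lemma: small vortices} (the minimal case), so I would either cite that classification or reproduce the short argument.

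Once the support is pinned down to be $\hat\partial e$ for a single edge $e$, it remains to determine the values. Orient the $2(m-1)$ plaquettes consistently around $e$; the closedness condition $d\omega = 0$ at each of the $(m-1)\binom{}{}$... more precisely at each $3$-cell containing $e$ (each such $3$-cell has exactly two plaquettes from $\hat\partial e$ in its boundary, with opposite incidence signs relative to $e$), forces $\omega$ to take a single common value $g := \pm\omega(p)$ on all plaquettes of $\hat\partial e$ when read with the orientation induced by $\hat\partial e$, i.e. $\omega(p) = (\hat\partial e)[p]\cdot g$ for every plaquette $p$. Since $\omega \neq 0$, $g \neq 0$. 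But $(\hat\partial e)[p] = (\partial p)[e]$, so this says exactly $\omega(p) = (\partial p)[e]\, g = \bigl(d(g\,e)\bigr)(p)$ for all $p$, i.e. $\omega = d(g\,e)$, as claimed.

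The main obstacle I anticipate is the bookkeeping in the second paragraph: rigorously extracting from "$\omega$ closed, connected, $|(\support\omega)^+| = 2(m-1)$, no boundary plaquettes" that the support is precisely $\hat\partial e$. This requires the local analysis at $3$-cells (how many plaquettes of a $3$-cell's boundary can lie in a minimal closed support) together with a connectivity/counting argument, and is the step where one genuinely uses the hypothesis on the cardinality rather than just $\ge 2(m-1)$. If the classification underlying Lemma~\ref{lemma: small vortices} is taken as known, though, this reduces to reading off case (a) of Figure~\ref{table: minimal configurations} and the proof becomes short.
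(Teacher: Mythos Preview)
The paper does not give its own proof of this lemma; it is stated as Lemma~4.6 of~\cite{flv2020} and simply cited. So there is nothing to compare against directly, and your proposal should be judged on its own.

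Your overall strategy is sound and would constitute a correct proof once the support classification is filled in. Two comments. First, the opening paragraph invoking the Poincar\'e lemma is a detour you never return to: you do not use the primitive \( \omega' \) anywhere, and instead construct \( g\,e \) directly from the structure of \( \omega \). You can drop that paragraph entirely. Second, the real work, as you correctly identify, is showing that \( (\support \omega)^+ = (\support \hat\partial e)^+ \) for some edge \( e \). Your sketch (every 3-cell meeting \( \support\omega \) meets it in at least two plaquettes, then combine with connectivity and the count \( 2(m-1) \)) is the right idea, but note that connectivity is not part of the hypotheses: the lemma is stated for arbitrary \( \omega \in \Omega^2_0(B_N,G) \), not for vortices. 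You would need to first observe that any nontrivial connected component of \( \omega \) is itself closed and hence, by Lemma~\ref{lemma: small vortices}, has at least \( 2(m-1) \) plaquettes, forcing \( \omega \) to have a single component. After that your argument goes through; the final paragraph determining the values via the 3-cell constraints is correct as written.
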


If \( \omega \in \Omega^2(B_N,G) \) is such that~\eqref{eq: minimal vortex} holds for some \( e \in C_1(B_N)\) and \( g \in G \smallsetminus \{ 0 \},\) then we say that \( \omega\) is a \emph{minimal vortex around \( e .\)}

\begin{lemma}\label{lemma: minimal 2}
    Let \(\omega \in \Omega_0^2(B_N,G),\) and assume that the support of \( \omega \) does not contain any boundary plaquettes of \( B_N \) and \(\bigl|(\support \omega)^+ \bigr| = 4(m-1)-2.\) Then there are two distinct edges \( e,e' \in C_1(B_N)\) with \( (\hat \partial e)^+ \cap (\hat \partial e')^+ \neq \emptyset\) and \( \sigma \in \Omega^1(B_N,G)\) with \( (\support \sigma)^+  =\{ e,e' \} \) such that \( d \sigma = \omega. \)
\end{lemma}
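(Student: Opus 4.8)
The plan is to follow the strategy behind Lemma~\ref{lemma: minimal vortices}: first pin down the combinatorial shape of $(\support\omega)^+$, and then read off an explicit primitive supported on exactly two edges. Since $\omega$ vanishes on the boundary plaquettes of $B_N$, the Poincar\'e lemma (Lemma~\ref{lemma: poincare}) provides a $1$-form $\sigma \in \Omega^1(B_N,G)$, vanishing on the boundary of $B_N$, with $d\sigma = \omega$. Among all such $\sigma$ --- equivalently, over the gauge orbit $\{\sigma + df\}$ where $f$ ranges over $0$-forms vanishing on the boundary of $B_N$ --- I would fix one minimizing $\bigl|(\support\sigma)^+\bigr|$. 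If this minimum is $0$ then $\omega = 0$, contradicting $\bigl|(\support\omega)^+\bigr| = 4(m-1)-2 > 0$; if it is $1$ then $\sigma = g\,e$ for a single edge $e$ and some $g \in G\smallsetminus\{0\}$, so $\omega$ is a minimal vortex and $\bigl|(\support\omega)^+\bigr| = \bigl|(\hat\partial e)^+\bigr| = 2(m-1) \ne 4(m-1)-2$ since $m \ge 3$. Hence the minimum is at least $2$. (One may also note, via Lemma~\ref{lemma: small vortices} and the fact that the six plaquettes bounding a common $3$-cell are pairwise adjacent in $\mathcal{G}_2$, that $\omega$ is necessarily a single vortex, since two or more connected components would already force $\bigl|(\support\omega)^+\bigr| \ge 4(m-1)$.)

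The heart of the argument is the structural claim that a gauge-minimal $\sigma$ with at least two edges in its support and with $\bigl|(\support d\sigma)^+\bigr| = 4(m-1)-2$ must have $(\support\sigma)^+ = \{e,e'\}$ for two distinct edges $e,e'$ lying on a common plaquette, i.e.\ with $(\hat\partial e)^+ \cap (\hat\partial e')^+ \ne \emptyset$. I would prove this by the local counting analysis underlying Lemma~\ref{lemma: small vortices} and the classification summarized in Figure~\ref{table: minimal configurations}: any plaquette $q$ with exactly one edge of $\support\sigma$ in its boundary automatically lies in $(\support d\sigma)^+$, because then $(d\sigma)_q$ is a single nonzero group element; gauge-minimality of $\sigma$ bounds the number of $\support\sigma$-edges incident to any given vertex, hence controls the number of plaquettes on which cancellation between edges of $\support\sigma$ can occur; and counting the surviving nonzero plaquettes starting from an extremal edge of $\support\sigma$ shows that $k := \bigl|(\support\sigma)^+\bigr|$ edges always produce $\bigl|(\support d\sigma)^+\bigr| \ge 4(m-1)-2$, with the inequality strict whenever $k \ge 3$ or $k = 2$ with the two edges non-adjacent. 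Therefore equality forces exactly the two configurations (b) and (c) of Figure~\ref{table: minimal configurations}, up to lattice symmetry, each of which is the coboundary of a $1$-form supported on two edges sharing a plaquette. As in the proof of Lemma~\ref{lemma: small vortices}, the analysis is insensitive to $m$, so it suffices to check the pictured case $m = 4$.

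Combining the two steps, the gauge-minimal $\sigma$ produced above already has all the required properties: $d\sigma = \omega$, and $(\support\sigma)^+ = \{e,e'\}$ with $e \ne e'$ and $(\hat\partial e)^+ \cap (\hat\partial e')^+ \ne \emptyset$. (Alternatively, once $e$ and $e'$ are identified from the shape of the support, one may directly set $\sigma = g\,e + g'\,e'$ with $g,g' \in G\smallsetminus\{0\}$ determined by the values of $\omega$ on the two ``books'' $(\hat\partial e)^+$ and $(\hat\partial e')^+$, whose unique common plaquette $p$ is the plaquette shared by $e$ and $e'$; the identity $d\sigma = \omega$ then holds on each book minus $p$ by construction, and on $p$ the cancellation $(d\sigma)_p = 0 = \omega_p$ follows from $d\omega = 0$ evaluated on a $3$-cell containing $p$.) The main obstacle is the structural claim of the second paragraph: ruling out gauge-minimal primitives with three or more edges, and those with two non-adjacent edges, that nevertheless yield a support of size exactly $4(m-1)-2$. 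This is a finite but somewhat delicate case analysis of the local configuration around $\support\sigma$, entirely parallel to the proofs of Lemmas~\ref{lemma: small vortices} and~\ref{lemma: minimal vortices} quoted from~\cite{f2021,flv2020}, and is precisely what Figure~\ref{table: minimal configurations} records.
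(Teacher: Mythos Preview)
The paper does not give its own proof of this lemma; it simply refers the reader to the proof of \cite[Lemma~2.4]{f2021}. Your outline is consistent with that strategy: pass to a gauge-minimal primitive $\sigma$ via the Poincar\'e lemma, rule out $|(\support\sigma)^+|\le 1$, and then classify the remaining possibilities by the local counting that underlies Lemma~\ref{lemma: small vortices} and Figure~\ref{table: minimal configurations}. In that sense your proposal and the paper agree, and both defer the actual case analysis to the cited reference.

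Two small comments. First, your sentence ``gauge-minimality of $\sigma$ bounds the number of $\support\sigma$-edges incident to any given vertex'' is a little loose for general finite abelian $G$: a gauge transformation at a vertex shifts all incident edge values by the same group element, which for $G=\mathbb{Z}_2$ indeed gives a clean bound, but for general $G$ one has to argue more carefully (as in \cite{f2021}) to exclude three-edge primitives. Second, in your alternative construction at the end, the claim that $\omega_p=0$ on the unique shared plaquette $p$ is correct but does not follow from evaluating $d\omega=0$ on a single $3$-cell containing $p$; rather, once you know $(\support\omega)^+\subset(\hat\partial e)^+\cup(\hat\partial e')^+$ and that this union has exactly $4(m-1)-1$ plaquettes, the hypothesis $|(\support\omega)^+|=4(m-1)-2$ forces exactly one plaquette to be missing, and closedness (applied to $3$-cells containing $e$ or $e'$ but not both) shows $\omega$ is constant on each book $\hat\partial e\smallsetminus\{p\}$, $\hat\partial e'\smallsetminus\{p\}$, so the missing plaquette must be $p$. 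None of this affects the main line of your argument.
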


For a proof of Lemma~\ref{lemma: minimal 2}, see the proof of~\cite[Lemma~2.4]{f2021}.

    



 
    

\begin{lemma}\label{lemma: polymer and box}
    Let \( q\) be an oriented surface with \( \partial q = \gamma.\) Further, let \( \omega \in \Omega^2(B_N,G)\) be such that \( d\omega = 0 \) and \( \omega(q) \neq 0.\) Then any box which contains \( \support \omega\) must intersect an edge in \( \support \gamma.\) 
\end{lemma}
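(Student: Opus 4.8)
The plan is to argue by contradiction, using the Poincar\'e lemma (Lemma~\ref{lemma: poincare}) together with the discrete Stokes identity $\langle d\omega', q\rangle = \langle \omega', \partial q\rangle$, which expresses that the exterior derivative and the boundary operator are adjoint with respect to the natural pairing of forms and chains. Suppose, for contradiction, that some box $B$ with $\support\omega \subseteq B$ is disjoint, as a subset of $\mathbb{R}^m$, from every edge of $\support\gamma$. Since every box containing $\support\omega$ contains the bounding box of $\support\omega$, we may take $B$ to be that bounding box.

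First I would enlarge $B$ by one lattice unit in each coordinate direction to a box $B'$, so that $\support\omega$ lies in the interior of $B'$, $B' \subseteq B_N$, and $B'$ is still disjoint from every edge of $\support\gamma$. (The existence of such a $B'$ is the one delicate point; see the final paragraph.) Then $\omega$, regarded as an element of $\Omega^2_0(B', G)$, vanishes on the boundary of $B'$, so by the last assertion of Lemma~\ref{lemma: poincare} there is a $1$-form $\omega' \in \Omega^1(B', G)$ with $d\omega' = \omega$ that vanishes on $\partial B'$. Extend $\omega'$ by zero to $B_N$; because $\omega'$ vanishes on $\partial B'$ and $\support\omega$ lies in the interior of $B'$, the identity $d\omega' = \omega$ persists on all of $B_N$.

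Now, since $\partial q = \gamma$, the discrete Stokes identity yields
\[
    \omega(q) = \langle d\omega', q\rangle = \langle \omega', \partial q\rangle = \langle \omega', \gamma\rangle = \sum_{e \in \support\gamma} \gamma[e]\,\omega'(e).
\]
Every edge $e \in \support\gamma$ lies outside $B'$ (as $B'$ meets no edge of $\gamma$), while $\support\omega' \subseteq B'$; hence $\omega'(e) = 0$ for all such $e$, and the right-hand side vanishes. This contradicts $\omega(q) \neq 0$, completing the proof.

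The step I expect to require the most care is the passage to the enlarged box $B'$. The one-unit enlargement can fail to stay inside $B_N$ if $\support\omega$ touches $\partial B_N$, and can fail to stay disjoint from $\support\gamma$ if an edge of $\gamma$ sits at $\ell^\infty$-distance exactly one from $B$. In the situations where this lemma is invoked, $\omega$ is a finite vortex supported away from $\partial B_N$, so the first difficulty does not occur; the second is a genuine borderline case, which I would handle by a homological argument that in fact reproves the lemma with no case distinction. Namely, since $\support\omega$, and hence the box $B$, lies in the interior of $B_N$, the subcomplex $K$ consisting of all cells of $B_N$ disjoint from $B$ is homotopy equivalent to a sphere $S^{m-1}$, whose first homology vanishes for $m \ge 3$; thus the $1$-cycle $\gamma$, which is supported in $K$, equals $\partial q'$ for some $2$-chain $q'$ in $K$. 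Then $q'$ is disjoint from $B \supseteq \support\omega$, so $\omega(q') = 0$, and $\omega(q) = \omega(q')$ because $d\omega = 0$ and $B_N$ is contractible, so $q - q'$ is a boundary on which the closed form $\omega$ vanishes. Again $\omega(q) = 0$, a contradiction.
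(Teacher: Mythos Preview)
Your argument is correct and follows the same route as the paper: apply the Poincar\'e lemma to obtain a primitive $1$-form supported in the box, then use the Stokes identity $\omega(q)=\sigma(\partial q)=\sigma(\gamma)$ to derive a contradiction. The paper's proof is shorter because it invokes the Poincar\'e lemma directly on $B$ to produce $\sigma\in\Omega^1(B_N,G)$ with $\support\sigma\subseteq B$ and $d\sigma=\omega$, skipping the enlargement to $B'$ and the extension-by-zero discussion; consequently the ``borderline case'' you single out never arises in the paper's formulation. Your homological alternative in the last paragraph is a valid independent argument, but it is not needed once one accepts the Poincar\'e lemma in the form the paper uses.
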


\begin{proof}
    Let \( B\) be a box that contains \( \support \omega.\) Since  \( d\omega = 0,\)  by the Poincaré lemma (see e.g.~\cite[Lemma 2.2]{flv2020} there is \( \sigma \in \Omega^1(B_N,G)\) whose support is contained in \( B \) such that \( d \sigma = \omega.\) Moreover, we have \( \omega(q) = \sigma(\gamma)\) (see e.g. \cite[Section 2.4]{flv2020}. Consequently, if \( B \) does not intersect \( \support \gamma, \) then \( \omega(q) = \sigma(\gamma) = 0\), a contradiction. 
\end{proof}

\begin{lemma}\label{lemma: plaquette at boundary}
        Let \( \nu \in \Omega^2(B_N,G)\) satisfy \( d\nu = 0,\) let \( B \) be a box that contains the support of \( \nu \) and let \( p \in \support \nu.\) Then there is at least one 1-cell in \( \support \partial p\) that is not in the boundary of \( B.\)
    \end{lemma}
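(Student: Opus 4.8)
The statement is a separation-type fact: if a closed $2$-form $\nu$ has all of its support inside a box $B$, then no plaquette $p \in \support\nu$ can have its entire boundary $\partial p$ lying on $\partial B$. The plan is to argue by contradiction, using the Poincar\'e lemma to represent $\nu$ as $d\sigma$ for some $1$-form $\sigma$ supported in $B$, and then exploit the fact that a plaquette whose boundary edges all sit on the boundary of the box is a ``corner'' plaquette in a degenerate position — effectively there is no room inside $B$ to detect it with a closed form. More precisely, suppose toward a contradiction that every $1$-cell in $\support\partial p$ lies on the boundary of $B$.

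First I would invoke the Poincar\'e lemma (Lemma~\ref{lemma: poincare}, or the cited \cite[Lemma 2.2]{flv2020}) to obtain $\sigma \in \Omega^1(B_N,G)$ with $\support\sigma \subseteq B$ and $d\sigma = \nu$. Next, I would construct a $2$-chain or a loop that pairs nontrivially with $\nu$ precisely at $p$ but whose boundary (after applying Stokes/the adjunction $d\sigma(q) = \sigma(\partial q)$) only involves edges of $p$ — hence, by assumption, edges on $\partial B$. The key point is then dimensional/combinatorial: since $p$ is a $2$-cell with all four boundary edges on the boundary hyperplanes of the box $B = \prod_i[a_i,b_i]$, the plaquette $p$ itself must lie in a face of $B$ where two coordinates are simultaneously extremal — but a $2$-cell spans exactly two coordinate directions, so this forces a contradiction with $p$ being a genuine $2$-cell in the interior family, OR one shows directly that one can push a dual edge out of $B$ to contradict $\support\sigma \subseteq B$. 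Concretely, I would pick an edge $e^*$ dual to $p$ (i.e., $p \in \support\hat\partial e^*$) lying just outside $B$ and observe that $\nu = d\sigma$ cannot be nonzero on $p$ unless $\sigma$ is nonzero on some edge of $p$; tracing which edges can carry $\sigma$ and using that $d\sigma$ restricted near $\partial B$ must vanish off $\support\sigma$ yields the contradiction.

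An alternative and probably cleaner route: apply Lemma~\ref{lemma: polymer and box} (or its proof) in reverse. If all edges of $\partial p$ were on $\partial B$, take the $1$-chain $\gamma_0 = \partial p'$ for a neighboring plaquette $p'$ sharing a dual $3$-cell with $p$, chosen so that $p'$ pokes outside $B$; more simply, consider the ``cube'' (the $3$-cell) $c$ one of whose boundary $2$-cells is $p$. Since $\partial c$ is a closed $2$-chain and $\nu$ is closed, $\nu(\partial c) = 0$ (as $\partial\partial = 0$ gives $d\sigma(\partial c)=\sigma(\partial\partial c)=0$, or directly since $\nu$ closed means it pairs trivially with boundaries of $3$-cells), so $\nu(p) = -\sum_{p'' \in \partial c,\, p'' \ne p}\nu(p'')$; iterating this around the face structure of $B$ and using that $\nu$ vanishes outside $B$ lets one propagate the nonzero value of $\nu$ on $p$ to a plaquette outside $B$, a contradiction.

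The main obstacle I anticipate is making the combinatorial geometry of ``all boundary edges of $p$ lie on $\partial B$'' fully precise: one needs to rule out that $p$ sits flush in a codimension-$1$ face of $B$ in a way that only touches $\partial B$ along its edges but whose ``flux-carrying'' dual direction still points inward. Handling this cleanly requires either the explicit Poincar\'e-lemma gauge construction (tracking exactly which edges $\sigma$ is supported on near the corner of the box) or a careful chase through the face lattice of $B$; the rest — invoking surjectivity of $d$ and the adjunction $d\sigma(q)=\sigma(\partial q)$ — is routine given the cited lemmas.
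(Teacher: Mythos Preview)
Your ``alternative and probably cleaner route'' via a $3$-cell $c$ with $p \in \partial c$ is exactly the paper's argument, but you overcomplicate its execution. No iteration or propagation is needed: the key geometric observation you are missing is that if \emph{all} edges of $\partial p$ lie on $\partial B$, then $p$ itself lies entirely in a face of $B$, so one of the two $3$-cells in $\hat\partial p$ is \emph{not} contained in $B$. Take that $c$. Because $B$ is a box and $c$ sits just outside it, the \emph{only} plaquette in $\partial c$ that lies in $B$ is $p$; all five other faces of $c$ are outside $B$ and hence outside $\support\nu$. Thus $d\nu(c) = \sum_{p'\in\partial c}\nu(p') = \nu(p) \neq 0$, contradicting $d\nu = 0$ in one step.

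Your first route through the Poincar\'e lemma and a $1$-form $\sigma$ is unnecessary and, as you note yourself, runs into awkward combinatorics near the corner of the box; the paper avoids this entirely by working directly with $d\nu$ on a single well-chosen $3$-cell.
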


    \begin{proof}
        Assume for contradiction that all edges in \( \support \partial p\) are in the boundary of \( B.\) Then there is a 3-cell \( c \in \hat \partial p\) that is not contained in \( B.\) Since \( B\) is a box, \( p \) must be the only  plaquette in \( \support \partial c\) that is in \( B.\) Since the support of \( \nu\) is contained in \( B,\) it follows that
        \[
        d\nu(c) = \sum_{p' \in \partial c} \nu(p') = \nu(p) \neq 0.
        \]
        Since this contradicts the assumption that \( d\nu=0,\) the desired conclusion follows.
    \end{proof}

The following lemma is elementary.
\begin{lemma}\label{lemma: plaquettes on a given distance}
    There is a constant \( C_m>0 \) such that for any \( e \in C_1(B_N)^+ \) and   \( j \geq 0, \) we have
    \begin{equation*}
        \bigl| \{ p \in C_2(B_N)^+ \colon \dist(p,e) = j \} \bigr| \leq C_m \max(1,j)^{m-1}.
    \end{equation*}
\end{lemma}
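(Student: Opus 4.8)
The plan is to reduce this to the elementary fact that an $\ell^\infty$-sphere of radius $\approx j$ and bounded thickness in $\mathbb{Z}^m$ contains $O_m(j^{m-1})$ lattice points.

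First I would use translation invariance of the lattice to assume that $e$, regarded as a unit segment in $\mathbb{R}^m$, lies inside the cube $[-1,1]^m$. Next I would parametrise each positively oriented plaquette $p \in C_2(B_N)^+$ by its ``base corner'' $v \in \mathbb{Z}^m$ together with the unordered pair of coordinate directions $\{j_1,j_2\}$ spanning it, so that $p = (v + \mathbf{e}_{j_1}) \wedge (v + \mathbf{e}_{j_2})$ and $p \subseteq v + [0,1]^m$. Since exactly $\binom m2$ plaquettes share a given base corner, it suffices to bound the number of base corners $v$ arising from plaquettes $p$ with $\dist(p,e) = j$.

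The key geometric step: because $p \subseteq v + [0,1]^m$ and $e \subseteq [-1,1]^m$ both have $\ell^\infty$-diameter at most a fixed absolute constant, the condition $\dist(p,e) = j$ forces $j - c \le \|v\|_\infty \le j + c$ for an absolute constant $c$ (depending only on the convention used to define the distance between cells, and in particular not on $m$, $j$, or $N$). Hence $v$ ranges over the $\ell^\infty$-shell $\{v \in \mathbb{Z}^m : \max(0,j-c) \le \|v\|_\infty \le j+c\}$. Finally I would bound the number of lattice points in this shell: it is the difference of two $\ell^\infty$-balls with radii differing by $O(1)$, i.e.\ at most $(2(j+c)+1)^m - (2\max(0,j-c-1)+1)^m$, which is $O_m(j^{m-1})$ for $j > c$ by the mean value theorem applied to $r \mapsto r^m$ over an interval of length $O(1)$, and is bounded by the constant $(2c+3)^m$ for $0 \le j \le c$. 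Combining the two regimes, multiplying by $\binom m2$, and absorbing the small-$j$ case into the constant yields the claim with a suitable $C_m$; the factor $\max(1,j)^{m-1}$ rather than $j^{m-1}$ is precisely what accounts for $j \le c$.

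As the paper notes, there is no real obstacle here. The only points that need a little care are verifying that the shell thickness $c$ genuinely depends only on the fixed shapes of a unit plaquette and a unit edge together with the chosen definition of $\dist$ — and not on the ambient dimension or the box size $N$ — and handling the regime $j \le c$ separately so that the stated form $\max(1,j)^{m-1}$ comes out correctly.
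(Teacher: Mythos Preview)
The paper gives no proof of this lemma, merely flagging it as elementary; your shell-counting argument is exactly the kind of verification the reader is expected to supply, and it is correct.

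One small caveat on your parenthetical claim that the shell thickness $c$ is independent of $m$: this holds if $\dist$ is the $\ell^\infty$ distance between cells, but for other natural conventions (graph distance or $\ell^1$) the relevant diameters of a unit plaquette and a unit edge scale with $m$, so $c$ would too. This is harmless for the lemma, since $C_m$ is allowed to depend on $m$, but the assertion ``in particular not on $m$'' is not justified without fixing the convention. If you want to keep the argument metric-agnostic, simply replace $\|v\|_\infty$ by the same norm used to define $\dist$, note that the corresponding shell of bounded thickness still contains $O_m(j^{m-1})$ lattice points, and let $c$ depend on $m$.
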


\begin{lemma}\label{lemma: box}
    Let \( j \geq 1,\) let \( p \in C_0(B_N)\) and let \( B\) be a box with side lengths \( s_1,s_2,\dots,s_m \) that contains \( p \) and is such that every face of the box contains at least one point on distance at least \( j\) from \( p .\) Then \(   \sum_{i=1}^m s_i   \geq jm/(m-1) .\)
\end{lemma}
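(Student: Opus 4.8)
The plan is to introduce coordinates on the box, read off one linear inequality from each of its $2m$ faces, and then add these $m$ pairs of inequalities together; the factor $m/(m-1)$ will emerge precisely from this averaging step. Concretely, I would write $B = \prod_{i=1}^m [a_i,b_i] \cap \mathbb{Z}^m$ with $s_i = b_i - a_i$, and $p = (p_1,\dots,p_m)$; since $B$ contains $p$ we have $a_i \le p_i \le b_i$ for every $i$. Set $u_i \coloneqq b_i - p_i \ge 0$ and $\ell_i \coloneqq p_i - a_i \ge 0$, so that $s_i = u_i + \ell_i$, and write $M_i \coloneqq \max(u_i,\ell_i)$, $m_i \coloneqq \min(u_i,\ell_i)$, so $s_i = M_i + m_i$. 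Here $\dist$ is the graph ($\ell^1$) distance on $\mathbb{Z}^m$, $\dist(p,x) = \sum_{k=1}^m |p_k - x_k|$.

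Next I would identify, for each face of $B$, the point of that face which is farthest from $p$. For the face $F_i^{+} = \{x \in B : x_i = b_i\}$ and $x \in F_i^{+}$ one has $\dist(p,x) = u_i + \sum_{k \ne i} |p_k - x_k|$, and each term $|p_k - x_k|$ with $k \ne i$ is maximized independently over $a_k \le x_k \le b_k$ by sending $x_k$ to whichever endpoint is farther from $p_k$, which gives $\max(u_k,\ell_k) = M_k$. Hence $\max_{x \in F_i^{+}} \dist(p,x) = u_i + \sum_{k \ne i} M_k$, and symmetrically $\max_{x \in F_i^{-}} \dist(p,x) = \ell_i + \sum_{k \ne i} M_k$ for the opposite face. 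The hypothesis that each face contains a point at distance $\ge j$ from $p$ therefore yields, for every $i$,
\[
\min(u_i,\ell_i) + \sum_{k \ne i} M_k \ge j, \qquad\text{equivalently}\qquad \sum_{k=1}^m M_k \ge j + (M_i - m_i).
\]

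Summing this last inequality over $i = 1,\dots,m$ gives $m\sum_k M_k \ge mj + \sum_i M_i - \sum_i m_i$, i.e.\ $(m-1)\sum_k M_k \ge mj - \sum_i m_i$, and hence
\[
\sum_{i=1}^m s_i = \sum_i M_i + \sum_i m_i \ \ge\ \frac{mj - \sum_i m_i}{m-1} + \sum_i m_i \ =\ \frac{mj + (m-2)\sum_i m_i}{m-1},
\]
whose right-hand side is $\ge mj/(m-1)$ because $m \ge 3$ and each $m_i \ge 0$. That would complete the proof.

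The argument is largely routine, so I do not expect a genuine obstacle; the only steps where a little care is needed are the identification of the farthest point on each face (it is a corner of that face, obtained by pushing every coordinate other than the fixed one to the extreme of its interval farther from $p$) and the bookkeeping in the summation step, which is exactly what produces the coefficient $m/(m-1)$ rather than the trivial bound $\sum_i s_i \ge j$ that a single face would give.
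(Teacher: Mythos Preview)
Your proof is correct and follows essentially the same route as the paper's: the paper first reduces by symmetry to the case $0\le x_i\le s_i/2$ (so in your notation $m_i=x_i$, $M_i=s_i-x_i$), writes the face condition as $x_i+\sum_{k\ne i}(s_k-x_k)\ge j$, sums over $i$, and obtains $(m-1)\sum_i s_i\ge mj+(m-2)\sum_i x_i$, which is exactly your final inequality. The only difference is cosmetic---you handle the two faces in each direction via $M_i,m_i$ instead of the WLOG reduction.
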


\begin{proof}
    Without loss of generality we can assume that the box \( B \) has corners at \( (0,0,\dots 0)\) and \( (s_1,s_2,\dots,s_m),\) that \( p=(x_1,x_2,\dots,x_m),\) and that \( 0 \leq x_i \leq s_i/2\) for \( i = 1,2, \dots, m. \) Then the assumption on \( B\) is equivalent to that
    \[
    x_i + \sum_{k \neq i} (s_k-x_k) \geq j
    \]
    for each \( i \in \{  1,2, \dots, m \}. \) Summing over \( i,\) we obtain
    \begin{align*}
        &\sum_{i = 1}^m \Bigl( x_i + \sum_{k \neq i} (s_k-x_k)\Bigr) \geq mj
        \Leftrightarrow
        \sum_{i = 1}^m x_i + (m-1) \sum_{i=1}^m s_i - (m-1) \sum_{i = 1}^m x_i  \geq mj
        \\&\qquad \Leftrightarrow
        (m-1) \sum_{i=1}^m s_i   \geq mj + (m-2) \sum_{i = 1}^m x_i.
    \end{align*}
    From this the desired conclusion immediately follows.
\end{proof}

\begin{lemma}\label{lemma: minimal size if at distance}
    There is a constant \( \hat C_m >0 \) such that for any oriented surface \( q ,\) any let \( j \geq 1, \) and any \( \nu \in \Lambda \) with \( \nu(q)=1 \) and  \( \dist(\support \nu ,\gamma)=j, \) we have \( |(\support \nu )^+| \geq  \hat C_m(j+1). \) 
\end{lemma}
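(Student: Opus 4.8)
The plan is to play two facts off each other: a vortex with few plaquettes has small diameter (its support is connected in $\mathcal G_2$, and adjacency in $\mathcal G_2$ entails bounded lattice distance), while Lemma~\ref{lemma: polymer and box} forces any box enclosing $\support\nu$ to reach an edge of $\gamma$, which by hypothesis lies at distance $j$. So if $\support\nu$ were small, its bounding box would be small, yet it would still have to stretch distance $j$ to meet $\gamma$ — a contradiction unless $\support\nu$ is large.

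In detail: if $p_1,p_2$ are adjacent in $\mathcal G_2$ then $\support\hat\partial p_1\cap\support\hat\partial p_2\neq\emptyset$, so they lie in the boundary of a common $3$-cell and hence $\dist(p_1,p_2)\le c_0$ for an absolute constant $c_0$. Since $\nu$ is a vortex, $(\support\nu)^+$ induces a connected subgraph of $\mathcal G_2$, so any two of its plaquettes are joined by a path with at most $|(\support\nu)^+|-1$ edges; the triangle inequality gives $\operatorname{diam}(\support\nu)\le c_0\,|(\support\nu)^+|$. Fix $p_0\in\support\nu$ and let $B$ be the $\ell^\infty$-cube centred at $p_0$ with half-side $c_0\,|(\support\nu)^+|$ (intersected with $B_N$ if needed, which keeps it a box containing $\support\nu$). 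Applying Lemma~\ref{lemma: polymer and box} with $\omega=\nu$ — valid since $d\nu=0$ and $\nu(q)=1\neq0$ — the box $B$ must meet some edge $e^*\in\support\gamma$; pick $x$ on $e^*\cap B$. Then $\dist(x,p_0)\le c_1\,|(\support\nu)^+|$ because $x,p_0\in B$ (with $c_1$ depending only on $m$ and the fixed metric), while $\dist(x,p_0)\ge\dist(\support\nu,\gamma)=j$ because $x$ lies on an edge of $\gamma$ and $p_0\in\support\nu$. Hence $|(\support\nu)^+|\ge j/c_1$, and since $\nu\neq0$ gives $|(\support\nu)^+|\ge1$, we obtain $|(\support\nu)^+|\ge\max(1,\,j/c_1)\ge(c_1+1)^{-1}(j+1)$, so the lemma holds with $\hat C_m=(c_1+1)^{-1}$.

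There is no real obstacle here: Lemma~\ref{lemma: polymer and box} supplies the essential geometry and the rest is bookkeeping. The two points needing a little care are the diameter-versus-cardinality bound for connected subgraphs of $\mathcal G_2$ (which is where the constant $c_0$, and ultimately the dependence of $\hat C_m$ on $m$, enters) and fixing one distance convention throughout — working with the $\ell^\infty$ distance so that ``box'' and ``ball'' coincide makes the estimates transparent. One could shave the constant using the more precise Lemma~\ref{lemma: box}, but it is not needed for this statement.
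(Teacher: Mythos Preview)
Your proof is correct and takes a genuinely different, more streamlined route than the paper's. The paper works with the \emph{minimal} box $B$ containing $\support\nu$, then invokes Lemma~\ref{lemma: plaquette at boundary} (no plaquette of a closed $2$-form lies in the boundary of its minimal box) and Lemma~\ref{lemma: box} (a combinatorial lower bound on the side-length sum of a box whose faces all contain points far from a fixed interior point) to show that $\sum_i s_i \gtrsim j$; it then appeals to connectivity in $\mathcal G_2$ to convert that side-length sum into a lower bound on the number of plaquettes. You bypass both auxiliary lemmas: instead of bounding the minimal box from below, you build a box from above whose half-side is controlled by the diameter bound $\operatorname{diam}(\support\nu)\le c_0\,|(\support\nu)^+|$ coming directly from connectivity in $\mathcal G_2$, then apply Lemma~\ref{lemma: polymer and box} once. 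The paper's route, via Lemma~\ref{lemma: box}, extracts the dimensional factor $m/(m-1)$ and could in principle yield a sharper constant $\hat C_m$, but since the statement only asks for existence of some $\hat C_m>0$, your argument is both sufficient and cleaner.
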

 
\begin{proof}
    Let \( B\) be the (unique) smallest box that contains the support of \( \nu,\) and assume that the side lengths of \( B\) are \( s_1,\dots,s_m.\) Since \( \nu(q) = 1, \) it follows from Lemma~\ref{lemma: polymer and box} that \( B\) intersects an edge of \(\support \gamma.\) Consequently, there is some edge in \( \gamma\) whose both endpoints are contained in \( B.\) Fix one such edge \( e.\) Note that, by assumption, we have \( \dist (\support \nu,e) \geq j.\)
    Since \( B\) is a minimal box containing \( \support \nu,\) there must be one edge on each face of the box which is contained in the boundary of some plaquette in \( \support \nu.\) 
    At the same time, by Lemma~\ref{lemma: plaquette at boundary}, since \( d\nu = 0,\) no plaquette in \( \support \nu \) can be in the boundary of \( B .\) This implies in particular that each plaquette in \( p \in (\support \nu)^+\) must have an edge in its boundary that is not in the boundary of \( B.\) 
    Since for each such plaquette we must have \( \dist (p,e)\geq j,\) it follows from Lemma~\ref{lemma: box} that 
    \[
    \sum_{i=1}^m (s_i-2) \geq \frac{(j+1)m}{m-1} \Leftrightarrow \sum_{i=1}^m s_i  \geq \frac{(j+1)m}{m-1} + 2m.
    \]
    Since \( \omega \in \Lambda,\) the set \( (\support \omega)^+\) induces a connected subgraph of \( G_2.\) Since each face of \( B\) contains at least one edge that is in the boundary of some plaquette in \( (\support \omega)^+,\) the desired conclusion immediately follows.
\end{proof}

\subsection{Vortex clusters}
Recall that $\Lambda$ denotes the set of vortices in $\Omega^2_0(B_N,G)$. 

If $\nu_1,\nu_2 \in \Lambda$, we write $\nu_1 \sim \nu_2$ if there is \( p_1\in (\support \nu_1)^+\) and \( p_2 \in (\support \nu_2)^+\) such that \( p_1 \sim p_2\) in \( \mathcal{G}_2.\)

Consider a multiset 
\begin{align*}
    &\mathcal{V} = \{ \underbrace{\nu_1,\dots, \nu_1}_{n_{\mathcal{V}}(\nu_1) \text{ times}}, \underbrace{\nu_2, \dots, \nu_2}_{n_{\mathcal{V}}(\nu_2) \text{ times}},\dots, \underbrace{\nu_k,\dots,\nu_k}_{n_{\mathcal{V}}(\nu_k) \text{ times}} \} = \{\nu_1^{n(\nu_1)}, \ldots, \nu_k^{n(\nu_k)}\}, 
\end{align*}
where \( \nu_1,\dots,\nu_k \in \Lambda \) are distinct and $n(\nu)=n_{\mathcal{V}}(\nu)$ denotes the number of times $\nu$ occurs in~$\mathcal{V}$. Following \cite[Chapter 3]{fv2017}, we say that $\mathcal{V}$ is \emph{decomposable} if there exist non-empty and disjoint multisets $\mathcal{V}_1,\mathcal{V}_2 \subset \mathcal{V}$ such that $\mathcal{V} = \mathcal{V}_1 \cup \mathcal{V}_2$ and such that for each pair $(\nu_1,\nu_2) \in \mathcal{V}_1 \times \mathcal{V}_2$, we have \( \nu_1 \nsim \nu_2.\)
%
%
If $\mathcal{V}$ is not decomposable, it is by definition a \emph{vortex cluster}. We stress that a vortex cluster is unordered and may contain several copies of the same vortex. 

Given a vortex cluster $\mathcal{V} $, let us define
\begin{equation*}
    |\mathcal{V}| = \sum_{\nu \in \Lambda} n_{\mathcal{V}} (\nu) \bigl| (\support \nu )^+ \bigr|,\quad n(\mathcal{V}) = \sum_{\nu \in \Lambda} n_{\mathcal{V}} (\nu),\quad \text{and} \quad
    \support \mathcal{V} = \bigcup_{\nu \in \mathcal{V}} \support \nu.
\end{equation*}
For a $2-$chain \( q \in C_2(B_N,\mathbb{Z}),\) we define 
\[ \mathcal{V}(q) = \sum_{\nu \in \Lambda} n_{\mathcal{V}}(\nu) \nu(q).\] 

We write \( \Xi \) for the set of all vortex clusters of $\Lambda$.

To simplify notation, for \( p \in C_2(B_N),\) \( q \in C^2(B_N,\mathbb{Z}), \) \( i \geq 1,\) and \( j\geq 1,\) we define
\begin{equation*}
    \Xi_{i} \coloneqq \bigl\{ \mathcal{V} \in \Xi \colon 
    n(\mathcal{V}) = i \bigr\},
\end{equation*} 
\begin{equation*}
    \Xi_{i,j} \coloneqq \bigl\{ \mathcal{V} \in \Xi \colon 
    n(\mathcal{V}) = i,\, |\mathcal{V}| = j \bigr\},
\end{equation*} 
\begin{equation*}
    \Xi_{i,j,p} \coloneqq \bigl\{ \mathcal{V} \in \Xi \colon 
    n(\mathcal{V}) = i,\, |\mathcal{V}| = j, p \in \support \mathcal{V} \bigr\},
\end{equation*} 
and
\begin{equation*}
    \Xi_{i,j,q} \coloneqq \bigl\{ \mathcal{V} \in \Xi \colon 
    n(\mathcal{V}) = i,\, |\mathcal{V}| = j, \mathcal{V}(q) \neq 0 \bigr\}.
\end{equation*} 
Further, we let
\begin{equation*}
    \Xi_{i^+ } \coloneqq \bigl\{ \mathcal{V} \in \Xi \colon 
    n(\mathcal{V}) \geq i\} \quad \text{and} \quad \Xi_{i^- } \coloneqq \Xi_{1^+ } \smallsetminus \Xi_{i^+ }  
\end{equation*} 
and define \( \Xi_{i^+,j}, \) \( \Xi_{i,j^+}, \) \( \Xi_{i^+,j^+}, \) etc. analogously.
We note that the sets defined above depend on \( N\) but usually suppress this in the notation. When we want to emphasize this we write \( \Xi_i(B_N),\) \( \Xi_{i,j}(B_N), \) etc.

The following lemma gives an upper bound on the number of vortices of a gives size that contains a given plaquette \( p. \)
\begin{lemma}\label{lemma: the number of vortices}
    Let \( k \geq 1\) and let \( p \in C_2(B_N)^+.\) Then
    \begin{equation*}
        |\Xi_{1,k,p}|\leq M^{2k-1}.
    \end{equation*}
\end{lemma}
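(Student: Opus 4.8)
The plan is to bound $|\Xi_{1,k,p}|$, i.e., the number of vortices $\nu$ (single vortices, since $n(\mathcal V)=1$) with $|(\support \nu)^+| = k$ and $p \in \support \nu$, by a two-step counting argument: first count the possible supports $(\support \nu)^+ \subset C_2(B_N)^+$, and then, for each fixed support, count the number of closed $2$-forms supported there. For the first step, recall from Section~\ref{sec: the graphs} that $\mathcal G_2$ has maximum degree at most $M$. Since a vortex has connected support in $\mathcal G_2$ and contains $p$, the support is a connected subgraph of $\mathcal G_2$ on $k$ vertices containing $p$; the standard bound on the number of connected subgraphs of a bounded-degree graph through a fixed vertex gives at most something like $(eM)^{k-1}$ or, more crudely, $M^{k-1}$ up to constants — here I would use the clean bound that the number of such vertex sets is at most $M^{k-1}$ (obtained, e.g., by a spanning-tree/exploration argument: a connected subgraph on $k$ vertices has a spanning tree, and trees through $p$ can be encoded by a walk of length $2(k-1)$, or more efficiently one gets $\le (M-1)^{k-1} \cdot \binom{\text{something}}{\cdot}$; in any case the paper's stated target $M^{2k-1}$ is generous enough that a loose bound suffices).

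For the second step, I fix a connected set $S \subset C_2(B_N)^+$ with $|S| = k$ and bound the number of $\nu \in \Omega^2_0(B_N, \mathbb Z_2)$ with $(\support\nu)^+ = S$. Since such $\nu$ is determined by its values $\nu(p') \in \mathbb Z_2 \setminus\{0\}$ on the plaquettes of $S$, a trivial bound is $1^k = 1$ for $G = \mathbb Z_2$ — but the lemma is stated for general finite abelian $G$ in this section, so for general $G$ there are at most $(|G|-1)^k$ such forms, and one must also account for the closedness constraint $d\nu = 0$ (which only helps, reducing the count). Actually, re-examining: the lemma as stated has no $|G|$ in it, so it is implicitly being applied with $G = \mathbb Z_2$ where the per-plaquette factor is $1$; then $|\Xi_{1,k,p}|$ is just the number of connected $k$-vertex subgraphs of $\mathcal G_2$ through $p$ whose induced form can be made closed, which is at most the number of such subgraphs, bounded by $M^{2k-1}$ with room to spare.

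So concretely the steps are: (1) observe $\mathcal V \in \Xi_{1,k,p}$ means $\mathcal V = \{\nu\}$ with $\nu$ a vortex, $|(\support\nu)^+|=k$, $p \in (\support\nu)^+$; (2) the map $\nu \mapsto (\support\nu)^+$ is at most $(|G|-1)^k$-to-one (and $1$-to-one for $G=\mathbb Z_2$); (3) count connected $k$-vertex subsets of the max-degree-$M$ graph $\mathcal G_2$ containing the fixed vertex $p$ — here use a Breadth/Depth-first exploration: each such set is the vertex set of a spanning tree rooted at $p$, and such a rooted tree is encoded by its DFS walk, which visits each of the $k-1$ non-root vertices via an edge, giving at most $M^{k-1}$ choices if done carefully, or at most $(2M)^{k-1} \le M^{2k-2}$ crudely; (4) combine to get $|\Xi_{1,k,p}| \le M^{k-1}\cdot 1 \le M^{2k-1}$ (the exponent $2k-1$ absorbs any slack from a cruder tree-counting bound or from the general-$G$ factor when $|G| \le M$, which holds here). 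The main obstacle is getting the subgraph-counting bound into the required form $M^{2k-1}$ cleanly — this is where one must be a little careful about whether to count via spanning trees, via the Kesten-type bound on self-avoiding-ish walks, or via an inductive peeling argument — but since the target exponent $2k-1$ is quite loose compared to the true growth rate $\sim M^{k}$, essentially any of these standard arguments will close the gap with margin to spare.
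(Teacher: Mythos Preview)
Your approach is essentially the same as the paper's: the paper encodes each vortex $\nu$ by a spanning walk of length $2k-1$ in $\mathcal{G}_2$ starting at $p$ (exactly your DFS-walk idea), notes that the map $\nu \mapsto T_\nu$ is injective (which, as you observe, uses that for $G=\mathbb{Z}_2$ the support determines the vortex), and bounds the number of such walks by $M^{2k-1}$. Your two-step decomposition (supports, then forms on a support) is just a slightly more explicit packaging of the same argument; the one caveat is that your aside ``$M^{k-1}$ if done carefully'' is too optimistic, but since you also record the cruder walk bound $M^{2k-2}\le M^{2k-1}$, the argument goes through.
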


\begin{proof}
    Let $\{\nu\}\in \Xi_{1,k,p}$ and let \( \mathcal{P}\) be the set of all paths in \( \mathcal{G}_2 \) that starts at \( p \) and has length \( 2\bigl| (\support \nu)^+ \bigr|-1 = 2k-1.\) Since each vertex in \( \mathcal{G}_2\) has degree at most \(10(m-2)=M,\) we have  \( |\mathcal{P}| \leq M^{2k-1}.\) 

    For \( \{ \nu \} \in \Xi_{1,k,p},\) let \( G_\nu\) be the subgraph of \( \mathcal{G}_2 \) induced by the set \( (\support \nu)^+.\)
    Then \( G_\nu\) is connected, and hence \( G_\nu \) has a spanning path \( T_\nu \in \mathcal{P}\) of length \( 2\bigl| (\support \nu)^+ \bigr|-1\) which starts at \( p .\) 
    Since the map \( \nu \mapsto T_\nu\) is an injective map from \( \Xi_{1,k,p}\) to \( \mathcal{P}\) and \( |\mathcal{P}| \leq  M^{2k-1},\) the desired conclusion immediately follows.
\end{proof}

\subsection{The activity}\label{sec: activity}

For \( \beta \geq 0 \) and \( g \in G \) with a unitary, one-dimensional representation $\rho$, we set
\begin{equation*}
    \phi_{\beta}(g) \coloneqq  e^{\beta \real  (\rho(g)-\rho(0))} .
\end{equation*} 
Since \( \rho \) is unitary, for any \( g \in G \) we have \( \rho(g) = \overline{\rho(-g)} \), and hence \( \real  \rho(g) = \real  \rho(-g) \). In particular,  for any \( g \in G \) 
\begin{equation} \label{eq: phi is symmetric}
    \phi_{\beta}(g)
    =
    e^{ \beta (\real  \rho(g)-\rho(0))  }
    =
    e^{\beta (\real  \rho(-g)-\rho(0)) }
    =
    \phi_{\beta}(-g).
\end{equation}
For \( \omega \in \Omega^2(B_N,G) \) and \( \beta \geq 0 \) we define the \emph{activity} of $\omega$ by
\begin{equation*}
    \phi_{\beta}(\omega) \coloneqq \prod_{p \in C_2(B_N)}\phi_a\bigl(\omega(p)\bigr).
\end{equation*}

Note that for \( \sigma \in \Omega^1(B_N,G), \) the Wilson action lattice gauge theory probability measure can be written
\begin{equation}\label{eq: mubetakappaphi}
    \mu_{\beta,N}(\sigma) = \frac{\phi_{\beta} (d\sigma)}{\sum_{\sigma \in \Omega^1(B_N,G)} \phi_{\beta}(d\sigma)}.
\end{equation}
Moreover, in the case when $G = \mathbb{Z}^2$ for \( \omega \in \Omega^2(B_N,\mathbb{Z}_2),\) we have
\begin{equation*}
    \begin{split}
        &\phi_{\beta}(\omega) = \prod_{p \in C_2(B_N)} \phi_{\beta}\bigl( \omega(p) \bigr) = \prod_{p \in C_2(B_N)} e^{-2\beta \mathbb{1}\bigl( \omega(p) = 1\bigr)}
        =
        e^{-2\beta \sum_{p \in C_2(B_N)} \mathbb{1}\bigl( \omega(p) = 1\bigr)}
        =
        e^{-2\beta |\support \omega|}.
    \end{split}
\end{equation*}
We note that, by definition, if \( \omega,\nu \in \Omega^2(B_N,G)\) and \( \nu \) is a vortex in \( \omega,\) then \( \phi_\beta (\omega) = \phi_\beta(\nu) \phi_\beta(\omega-\nu).\)

We extend the notion of activity to vortex clusters \( \mathcal{V} \in \Xi\) by letting
\begin{equation*}
    \phi_\beta(\mathcal{V}) = \prod_{\nu \in \Lambda} \phi_\beta(\nu)^{n_{\mathcal{V}}(\nu)} = e^{-4\beta|\mathcal{V}|}.
\end{equation*}

\section{Low temperature cluster expansion}\label{sec: cluster expansions}
In this section we review the cluster expansion for the relevant Ising lattice gauge theory partition functions defined on a finite box $B_N$. The material here is for the most part well-known. See~\cite{fv2017} for a text book presentation for the standard Ising model and \cite{seiler} for a discussion in the context of lattice gauge theories.  

\subsection{Ursell function}
We will work with the  Ursell function corresponding to the choice of vortices as polymers and hard core interaction: two vortices are compatible if and only if they correspond to separate components in the graph $\mathcal{G}_2$.
Before defining the Ursell function, we need some additional notation. 
For \( k \geq 1,\) we write \( G \in \mathcal{G}^k\) if \( G\) is a connected graph with vertex set \( V(G) = \{ 1,2,\dots, k\}.\)  Let \( E(G)\) be the (undirected) edge set of \( G.\) 
Recall that we write $\nu_1 \sim \nu_2$ if there is \( p_1\in (\support \nu_1)^+\) and \( p_2 \in (\support \nu_2)^+\) such that \( p_1 \sim p_2\) in \( \mathcal{G}_2,\) where $\mathcal{G}_2$ was defined in Section~\ref{sec: the graphs}.

\begin{definition}[The Ursell function]\label{def:ursell}
    For \( k \geq 1 \) and  \( \nu_1,\nu_2,\dots , \nu_k \in \Lambda,\) we define
    \begin{equation*}
        U(  \nu_1, \ldots, \nu_k  )\coloneqq \frac{1}{k!} \sum_{G \in \mathcal{G}^k} (-1)^{|E(\mathcal{G} )|} \prod_{(i,j) \in E(\mathcal{G})} \mathbb{1}( \nu_i \sim \nu_j).
    \end{equation*} 
    Note that this definition is invariant under permutations of the vortices \( \nu_1,\nu_2,\dots,\nu_k.\)

   For \( \mathcal{V} \in \Xi_k,\) and any enumeration \( \nu_1,\dots, \nu_k \) (with multiplicities) of the vortices in \( \Xi_k,\) we define
   \begin{equation}\label{eq: ursell functions} 
        U(\mathcal{V}) = k! \, U(\nu_1,\dots, \nu_k).
   \end{equation}
\end{definition} 
Note that for any \( \mathcal{V} \in \Xi_1,\) we have \( U(\mathcal{V})=1,\) and for any \( \mathcal{V} \in \Xi_2,\)  we have
\( U(\mathcal{V}) = -1. \)

\subsection{Partition functions}
The partition function for Ising lattice gauge theory, viewed as a model for plaquette configurations ,can be written as follows:
\[
Z_{\beta,N} = \sum_{\omega \in \Omega^2_0(B_N,G)} e^{\beta \sum_{p \in C_2(B_N)} \real  (\rho(\omega(p))-\rho(0))} = \sum_{\omega \in \Omega^2_0(B_N,G)} \phi_\beta(\omega). 
\]
See, e.g., Section~3 of \cite{flv2020}. This is a finite sum and the definition extends to $\beta \in \mathbb{C}$.
An alternative representation for $ Z_{\beta,N}$ is given by the \emph{vortex partition function} which is defined by the following (formal) expression:
\begin{equation}\label{eq: vortex-partition}
     Z_{\beta,N}^v = \exp\left(
    \sum_{\mathcal{V}\in \Xi} \Psi_\beta(\mathcal{V})\right),
\end{equation}
where for \( \mathcal{V} \in \Xi, \) we define
\begin{equation*}
    \begin{split}
        \Psi_\beta(\mathcal{V}) \coloneqq U(\mathcal{V})
        \phi_\beta(\mathcal{V}),
    \end{split}
\end{equation*}
and $U$ is the Ursell function defined in Definition~\ref{def:ursell}.

It is not obvious that the series in the exponent of~\eqref{eq: vortex-partition} is convergent but this follows from the next lemma, assuming $\real  \beta > \beta_0(m)=\frac{1}{4} \log 30(m-2)$, and we verify below that in this case $\log Z_{\beta,N} = \log Z_{\beta,N}^v$.

\begin{lemma}\label{lemma: the assumption on a}Let $G=\mathbb{Z}_2$.
    Suppose $\real  \beta > \beta_0(m) $. Then, for any \( \nu \in \Lambda,\) we have 
    \begin{equation*}
        \sum_{\mathcal{V} \in \Xi \colon \nu \in \mathcal{V}} \bigl|\Psi_\beta(\mathcal{V})\bigr| \leq e^{|\support \nu|/3} \phi_\beta(\nu) 
    \end{equation*} 
    Moreover, the series in \eqref{eq: vortex-partition} is absolutely convergent. 
\end{lemma}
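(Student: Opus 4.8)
The plan is to invoke the standard Kotecký–Preiss / Friedli–Velenik convergence criterion for polymer models with hard-core interaction, applied to the polymer system whose polymers are vortices $\nu \in \Lambda$ with incompatibility relation $\sim$ and activities $\phi_\beta(\nu) = e^{-4\beta|(\support\nu)^+|}$. Concretely, it suffices to exhibit a weight function $a(\cdot)$ — here $a(\nu) = |\support\nu|/3$ will do — such that for every $\nu \in \Lambda$,
\[
\sum_{\nu' \in \Lambda \colon \nu' \nsim \nu \text{ is false}} |\phi_\beta(\nu')|\, e^{a(\nu')} \le a(\nu),
\]
i.e. $\sum_{\nu' \sim \nu} e^{-4\real\beta|\support\nu'|}\, e^{|\support\nu'|/3} \le |\support\nu|/3$. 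Granting this, the cited theorem from \cite[Chapter 5]{fv2017} gives both the stated bound $\sum_{\mathcal V \ni \nu}|\Psi_\beta(\mathcal V)| \le e^{|\support\nu|/3}\phi_\beta(\nu)$ and the absolute convergence of the series in \eqref{eq: vortex-partition} (sum over all clusters, after fixing e.g. a plaquette). So the real content is the displayed one-vortex estimate, which is a counting bound.

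First I would reduce to a sum over plaquettes. Since $\nu \sim \nu'$ requires a plaquette $p \in (\support\nu)^+$ adjacent in $\mathcal{G}_2$ to some plaquette of $\nu'$, and each $p$ has at most $M$ neighbours in $\mathcal{G}_2$ (Section~\ref{sec: the graphs}), every $\nu'$ with $\nu' \sim \nu$ contains some plaquette $p'$ lying within graph-distance $\le 1$ of $(\support\nu)^+$; there are at most $(M+1)|(\support\nu)^+| \le (M+1)|\support\nu|$ such plaquettes $p'$. Then I bound, for a fixed plaquette $p'$,
\[
\sum_{k \ge 2(m-1)} \#\{\nu' \in \Lambda : p' \in (\support\nu')^+,\ |(\support\nu')^+| = k\}\, e^{-4\real\beta k}e^{k/3} \le \sum_{k \ge 2(m-1)} M^{2k-1} e^{-4\real\beta k} e^{k/3},
\]
using Lemma~\ref{lemma: the number of vortices} ($|\Xi_{1,k,p'}| \le M^{2k-1}$) and Lemma~\ref{lemma: small vortices} (the smallest vortex not touching the boundary has $k \ge 2(m-1) \ge 4$). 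The geometric series converges precisely when $M^2 e^{-4\real\beta}e^{1/3} < 1$, i.e. $\real\beta > \tfrac14\log M^2 + \tfrac1{12} = \tfrac12\log M + \tfrac1{12}$ — and more room than that is available since $\beta_0(m) = \tfrac12\log M + \tfrac16$. After summing the geometric series and multiplying by the factor $(M+1)|\support\nu|$, one checks that the total is $\le |\support\nu|/3$; this is where the precise numerical value of $\beta_0$ and the factor $1/6$ (versus $1/12$) are spent, giving the needed slack to absorb the polynomial/linear prefactors $M+1$ and $M^{-1}$.

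The main obstacle is purely bookkeeping: making the constants line up so that the linear-in-$|\support\nu|$ prefactor coming from summing over plaquettes of $\nu$ (and their $\mathcal{G}_2$-neighbours) is exactly compensated, leaving $\le |\support\nu|/3$ and not merely $\le c|\support\nu|$ for some larger $c$. The key points to be careful about are: (i) the adjacency $\sim$ is via \emph{coboundaries} $\support\hat\partial p$, so "distance $\le 1$ in $\mathcal{G}_2$" must be handled through the degree bound $M$ rather than geometric distance; (ii) one must include $\nu' = \nu$ itself in the sum, but this single term is trivially controlled; (iii) the bound $M^{2k-1}$ from Lemma~\ref{lemma: the number of vortices} already counts \emph{all} vortices of size $k$ through $p'$, so no separate translation argument is needed. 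Once the geometric series is summed, the inequality $\real\beta > \tfrac12\log M + \tfrac16$ should comfortably close the estimate, and the convergence of \eqref{eq: vortex-partition} then follows from the general polymer expansion theorem without further work.
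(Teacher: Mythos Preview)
Your proposal is correct and follows essentially the same route as the paper: verify the Koteck\'y--Preiss hypothesis of \cite[Theorem~5.4]{fv2017} with weight \(a(\nu)=|\support\nu|/3\), reduce the sum \(\sum_{\nu'\sim\nu}|\phi_\beta(\nu')|e^{a(\nu')}\) to a per-plaquette count using the degree bound \(M\) in \(\mathcal{G}_2\), invoke Lemma~\ref{lemma: the number of vortices} and the minimum vortex size \(2(m-1)\), and sum the resulting geometric series. The only cosmetic difference is that the paper sums over genuine \(\mathcal{G}_2\)-neighbours \(p'\sim p\) of \(p\in(\support\nu)^+\) (at most \(M\) each) rather than over plaquettes at graph distance \(\le 1\) (your \(M+1\)), which makes the constants slightly cleaner; also be careful that \(|\support\nu|=2|(\support\nu)^+|\), so \(e^{a(\nu')}=e^{2k/3}\) rather than \(e^{k/3}\) when \(k=|(\support\nu')^+|\).
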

\begin{proof}
Let $\alpha = 1/3$. We will prove that for each \( \nu \in \Lambda\) we have 
    \begin{equation*}
        \sum_{\nu' \in \Lambda} |\phi_\beta(\nu')| e^{\alpha| \support \nu'|} \mathbb{1}(\nu \sim \nu') \leq \alpha |\support \nu| .
    \end{equation*}
    Given this, the conclusion follows from Theorem~5.4 of \cite{fv2017} by choosing ${a(\nu) \coloneqq \alpha |\support \nu |}$.
    Note that since \( M^2e^{-4 \real  \beta} < 1/3, \) we have
    \begin{equation*}
        M^2e^{-2(2 \real  \beta-\alpha)} < 1 
    \end{equation*}
    and
    \begin{equation*}
        \frac{\bigl(M^2e^{-2(2 \real  \beta-\alpha)} \bigr)^{2(m-1)}}{1-M^2e^{-2(4 \real  \beta-\alpha)} } \leq \alpha.
    \end{equation*}
    Thus, for any \( \nu \in \Lambda,\) we have 
    \begin{align*}
        &\sum_{\nu' \in \Lambda} |\phi_\beta(\nu')| e^{\alpha |\support \nu'|} \mathbb{1}(\nu \sim \nu') 
        =
        \sum_{\nu' \in \Lambda \colon \nu \sim \nu'} |\phi_\beta(\nu')| e^{\alpha |\support \nu'|}
        \\&\qquad=
        \sum_{\nu' \in \Lambda \colon \nu \sim \nu'} e^{-(2 \real \beta-\alpha) |\support \nu'|}
        \\&\qquad \leq 
        \sum_{p \in (\support \nu)^+} 
        \sum_{\substack{p'\in C_2(B_N)^+ {\colon}\\ p' \sim p}} \;
        \sum_{j=2(m-1)}^\infty
        |\Xi_{1,j,p'}|
        e^{-(2 \real \beta-\alpha) 2j}.
    \end{align*}
    Using Lemma~\ref{lemma: the number of vortices} and the definition of \(M,\) it follows that 
    \begin{align*}
        &\sum_{\nu' \in \Lambda} |\phi_\beta(\nu')| e^{\alpha |\support \nu'|} \mathbb{1}(\nu \sim \nu') \leq 
        \bigl| (\support \nu)^+\bigr| \sum_{j=2(m-1)}^\infty M^{2j}  e^{-(2 \real \beta-\alpha) 2j}
        \\&\qquad=
        |(\support \nu)^+| \frac{\bigl(M^2e^{-2(2\real \beta-\alpha)} \bigr)^{2(m-1)}}{1-M^2e^{-2(2\beta-\alpha)} }.
    \end{align*}
    The desired conclusion now follows from the choice of \( \alpha.\)
\end{proof}

\begin{lemma}\label{lem:partition-functions}
Let $G=\mathbb{Z}_2$. Suppose $\real \beta > \beta_0(m)$. Then
\begin{equation}\label{eq: log Z expansion}
    \log Z_{\beta,N} = \log Z_{\beta,N}^v = \sum_{\mathcal{V} \in \Xi} \Psi_\beta(\mathcal{V}),
\end{equation}
and \( \log Z_{\beta,N} \) is an analytic function of $\beta$.
\end{lemma}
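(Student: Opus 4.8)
The plan is to establish the identity $\log Z_{\beta,N} = \log Z_{\beta,N}^v$ first for real $\beta > \beta_0(m)$ via a purely combinatorial expansion, and then extend analyticity to complex $\beta$ by an analytic continuation argument. The starting point is the plaquette representation
\[
Z_{\beta,N} = \sum_{\omega \in \Omega^2_0(B_N,G)} \phi_\beta(\omega).
\]
First I would observe that every nonzero $\omega \in \Omega^2_0(B_N,\mathbb{Z}_2)$ decomposes uniquely into vortices: its support, viewed as a subgraph of $\mathcal{G}_2$, splits into connected components, and restricting $\omega$ to each component gives a vortex (one must check each such restriction is still closed, which holds because distinct components share no plaquette that is a face of a common $3$-cell). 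Since the activity is multiplicative over vortices, $\phi_\beta(\omega) = \prod_i \phi_\beta(\nu_i)$ where $\nu_1,\dots,\nu_r$ are the vortices of $\omega$. Hence $Z_{\beta,N}$ is the hard-core polymer partition function with polymer set $\Lambda$, activities $\phi_\beta(\nu)$, and compatibility relation ``not $\sim$''. In the language of Chapter 5 of~\cite{fv2017}, this is exactly $\Xi(\Lambda)$ with these weights (keeping in mind the subtlety that polymers here carry no ``geometric'' hard-core but the abstract compatibility $\nu_1 \not\sim \nu_2$).

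Next I would invoke the Mayer/cluster expansion (Theorem~5.4 of~\cite{fv2017}): once the Kotecký–Preiss-type criterion $\sum_{\nu'} |\phi_\beta(\nu')| e^{\alpha|\support\nu'|}\mathbb{1}(\nu\sim\nu') \le \alpha|\support\nu|$ holds — which is precisely what Lemma~\ref{lemma: the assumption on a} gives for $\real\beta > \beta_0(m)$ with $\alpha = 1/3$ — the logarithm of the polymer partition function equals the absolutely convergent series $\sum_{\mathcal{V}\in\Xi}\Psi_\beta(\mathcal{V})$ over clusters (multisets), with $\Psi_\beta(\mathcal{V}) = U(\mathcal{V})\phi_\beta(\mathcal{V})$. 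This yields $\log Z_{\beta,N} = \log Z_{\beta,N}^v = \sum_{\mathcal{V}\in\Xi}\Psi_\beta(\mathcal{V})$ for real $\beta > \beta_0(m)$. Here a small point to check is that the ``$\log$'' on the left is unambiguous: for real $\beta>\beta_0(m)$ one should argue $Z_{\beta,N} > 0$ (indeed $\ge 1$, since $\omega = 0$ contributes $1$ and all activities are positive), so $\log Z_{\beta,N}$ is the ordinary real logarithm.

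For the analyticity claim, I would argue as follows. Each term $\Psi_\beta(\mathcal{V}) = U(\mathcal{V})e^{-4\beta|\mathcal{V}|}$ is entire in $\beta$, and by Lemma~\ref{lemma: the assumption on a} the series $\sum_{\mathcal{V}}|\Psi_\beta(\mathcal{V})|$ converges locally uniformly on the open half-plane $\{\real\beta > \beta_0(m)\}$ (the bound $\sum_{\mathcal{V}\ni\nu}|\Psi_\beta(\mathcal{V})| \le e^{|\support\nu|/3}\phi_\beta(\nu)$ together with summing over $\nu$ and the fact that each cluster contains at least one vortex of size $\ge 2(m-1)$ gives a geometric-type bound uniform on compact subsets). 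Hence $\sum_{\mathcal{V}}\Psi_\beta(\mathcal{V})$ defines a holomorphic function there by Weierstrass's theorem. On the real ray it equals $\log Z_{\beta,N}$; since $Z_{\beta,N}$ is a finite sum of exponentials $e^{-2\beta k}$ with nonnegative integer coefficients and constant term $1$, it is entire and nonvanishing in a complex neighborhood of the real ray $\{\real\beta>\beta_0(m)\}$ (shrinking if necessary), so $\log Z_{\beta,N}$ admits a holomorphic branch there; the two holomorphic functions agree on a set with a limit point, hence coincide, giving $\log Z_{\beta,N} = \sum_{\mathcal{V}}\Psi_\beta(\mathcal{V})$ and analyticity on all of $\{\real\beta > \beta_0(m)\}$.

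The main obstacle I anticipate is not the cluster-expansion machinery itself — that is handed to us by~\cite{fv2017} once Lemma~\ref{lemma: the assumption on a} is in place — but rather carefully justifying the unique vortex decomposition of closed $2$-forms and verifying that the combinatorial model genuinely matches the abstract polymer setup of~\cite{fv2017} (in particular that the set of clusters $\Xi$ as defined via the Ursell function indexes exactly the terms produced by Theorem~5.4, and that ``decomposable'' corresponds to the polymer incompatibility graph being disconnected). A secondary technical point requiring care is the passage to complex $\beta$: one must ensure $Z_{\beta,N}$ stays away from the branch cut, which is where the observation that $Z_{\beta,N}$ has constant term $1$ and small higher terms (for $\real\beta$ large) does the work.
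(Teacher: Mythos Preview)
Your approach is essentially the same as the paper's: both rewrite $Z_{\beta,N}$ as a hard-core polymer partition function via the unique vortex decomposition of closed $2$-forms, then invoke the cluster expansion from~\cite{fv2017} with the convergence supplied by Lemma~\ref{lemma: the assumption on a}. The paper cites Proposition~5.3 of~\cite{fv2017} for the identity $Z_{\beta,N}=\exp\bigl(\sum_{\mathcal{V}}\Psi_\beta(\mathcal{V})\bigr)$, whereas you cite Theorem~5.4; strictly speaking both are needed (5.3 gives the formal identity, 5.4 the convergence), and Lemma~\ref{lemma: the assumption on a} already rests on Theorem~5.4, so this is only a minor citation slip.

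One simplification: your separate analytic-continuation step is unnecessary. Lemma~\ref{lemma: the assumption on a} establishes the Koteck\'y--Preiss bound for all complex $\beta$ with $\real\beta>\beta_0(m)$, and Proposition~5.3 of~\cite{fv2017} is purely combinatorial and applies to complex weights. Thus $Z_{\beta,N}=\exp\bigl(\sum_{\mathcal{V}}\Psi_\beta(\mathcal{V})\bigr)$ holds directly on the whole half-plane, so $Z_{\beta,N}$ is automatically nonvanishing there and $\log Z_{\beta,N}=\sum_{\mathcal{V}}\Psi_\beta(\mathcal{V})$ is analytic as a locally uniform limit of entire functions. You do not need to first restrict to real $\beta$ and then argue about branches near the real ray.
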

\begin{proof}
The set $\Omega^2_0(B_N, G)$ is in bijection with the set of subsets of $\Lambda$ with the property that the vortices in each subset correspond to sets that are not connected in \( \mathcal{G}_2.\)
Therefore, we can write
\[
Z_{\beta,N} = \sum_{\Lambda' \subset \Lambda} \phi_{\beta}(\Lambda') \prod_{\{\nu,\nu'\} \subset \Lambda'} \mathbb{1}( \nu \nsim \nu')
\]
and this holds for any choice of $\beta$. On the other hand, if $\real \beta > \beta_0(m)$, we can apply Proposition~5.3 of \cite{fv2017} to see that the right-hand side in the last display equals $\log Z_{\beta,N}^v$ as defined in \eqref{eq: vortex-partition}. 
\end{proof}
We now assume \( \beta \) is real, and when $\beta > \beta_0$ we write $Z_{\beta,N}$ also for the vortex partition function.
We wish to express the Wilson loop expectation using the logarithm of the partition function. For this, we fix a loop $\gamma$ and an oriented surface $q$ such that $\gamma = \partial q$ and recall the following fact, see \cite[Section 3]{flv2020}. 
\begin{lemma}\label{lem:dec5}
    Let $G=\mathbb{Z}_2$. Let \( \beta \geq 0\) and let \( q\) be an oriented surface with \( \partial q = \gamma.\) Then for all $N$ such that $\support q \subseteq B_N$
    \begin{equation*}
       \mathbb{E}_{\beta,N}[W_\gamma] 
        = 
        Z^{-1}_{\beta,N} \sum_{\omega \in \Omega_0^2(B_N,G)}  \phi_\beta(\omega) \rho\bigl(\omega(q) \bigr).
    \end{equation*}
\end{lemma}

Consider now the weighted vortex partition function
\begin{equation}\label{eq:dec5.1}
   Z_{\beta,N}[q] \coloneqq \exp\left(\sum_{\mathcal{V} \in \Xi} \Psi_{\beta,q}(\mathcal{V})\right),
\end{equation}
where
\[
    \Psi_{\beta,q}(\mathcal{V}) \coloneqq \Psi_\beta(\mathcal{V}) \rho\bigl( \mathcal{V}(q) \bigr) =  U(\mathcal{V}) \phi_\beta(\mathcal{V})  
    \rho\bigl( \mathcal{V}(q) \bigr).
\]
The series on the right-hand side of~\eqref{eq:dec5.1} is absolutely convergent when $\beta > \beta_0(m)$ by the proof of Lemma~\ref{lemma: the assumption on a} since $ \bigl|\rho\bigl(\mathcal{V}(q)\bigr)\bigr| = 1$ for each \( \mathcal{V} \in \Xi.\) As in the proof of Lemma~\ref{lem:partition-functions}, using \cite[Proposition 5.3]{fv2017}, replacing the weight $\phi_\beta(\mathcal{V})$ by $\phi_\beta(\mathcal{V})\rho\bigl(\mathcal{V}(q)\bigr)$, we have
\[
\log Z_{\beta,N}[q] = \sum_{\omega \in \Omega_0^2(B_N,G)}  \phi_\beta(\omega) \rho\bigl(\omega(q) \bigr).
\] 
\begin{proposition}\label{proposition: the cluster expansion}
    Let $G=\mathbb{Z}_2$. Let $\beta > \beta_0(m)$ and let \( q \) be an oriented surface with \( \partial q = \gamma.\) 
    Then for all $N$ such that $\support q \subseteq B_N$,
    \begin{equation}\label{eq: expansion equation}
        -\log \mathbb{E}_{\beta,N}[ W_\gamma ] = \sum_{\mathcal{V} \in \Xi} \bigl( \Psi_\beta(\mathcal{V})-\Psi_{\beta,q}(\mathcal{V})\bigr) = 
        \sum_{\mathcal{V} \in \Xi} \Psi_\beta(\mathcal{V})  \pigl( 1-\rho \bigl(\mathcal{V}(q)\bigr) \pigr) .
    \end{equation}
\end{proposition}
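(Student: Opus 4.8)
The plan is to combine the two cluster expansions already established in the excerpt. By Lemma~\ref{lem:dec5}, for all $N$ with $\support q \subseteq B_N$,
\[
\mathbb{E}_{\beta,N}[W_\gamma] = Z^{-1}_{\beta,N} \sum_{\omega \in \Omega_0^2(B_N,G)} \phi_\beta(\omega)\rho\bigl(\omega(q)\bigr) = \frac{Z_{\beta,N}[q]}{Z_{\beta,N}},
\]
where the last equality uses the identity $\log Z_{\beta,N}[q] = \sum_{\omega} \phi_\beta(\omega)\rho(\omega(q))$ derived right before the proposition (via \cite[Proposition~5.3]{fv2017} applied to the weight $\phi_\beta(\mathcal{V})\rho(\mathcal{V}(q))$), together with Lemma~\ref{lem:partition-functions} which gives $\log Z_{\beta,N} = \sum_{\mathcal V} \Psi_\beta(\mathcal V)$.

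Taking logarithms,
\[
-\log \mathbb{E}_{\beta,N}[W_\gamma] = \log Z_{\beta,N} - \log Z_{\beta,N}[q] = \sum_{\mathcal{V}\in \Xi} \Psi_\beta(\mathcal{V}) - \sum_{\mathcal{V}\in\Xi}\Psi_{\beta,q}(\mathcal{V}).
\]
To combine these into a single sum I need that both series converge absolutely so that term-by-term subtraction is legitimate; this is exactly what was noted after \eqref{eq:dec5.1}, namely that $\sum_{\mathcal V}|\Psi_{\beta,q}(\mathcal V)| = \sum_{\mathcal V}|\Psi_\beta(\mathcal V)\rho(\mathcal V(q))| = \sum_{\mathcal V}|\Psi_\beta(\mathcal V)| < \infty$ for $\beta > \beta_0(m)$, since $|\rho(\mathcal V(q))| = 1$, and absolute convergence of $\sum_{\mathcal V}|\Psi_\beta(\mathcal V)|$ is Lemma~\ref{lemma: the assumption on a}. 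Hence
\[
-\log \mathbb{E}_{\beta,N}[W_\gamma] = \sum_{\mathcal{V}\in\Xi}\bigl(\Psi_\beta(\mathcal{V}) - \Psi_{\beta,q}(\mathcal{V})\bigr) = \sum_{\mathcal{V}\in\Xi}\Psi_\beta(\mathcal{V})\bigl(1 - \rho(\mathcal{V}(q))\bigr),
\]
using the definition $\Psi_{\beta,q}(\mathcal V) = \Psi_\beta(\mathcal V)\rho(\mathcal V(q))$.

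There is essentially no hard part: the proposition is a bookkeeping consequence of facts stated earlier. The only point requiring a moment's care is the one subtlety hidden in the first display, namely that $\mathbb{E}_{\beta,N}[W_\gamma] = Z_{\beta,N}[q]/Z_{\beta,N}$ with $Z_{\beta,N}[q]$ identified with the weighted vortex partition function via the polymer-expansion correspondence; I would spell this out by noting that the bijection between $\Omega^2_0(B_N,G)$ and admissible subsets of $\Lambda$ (as in the proof of Lemma~\ref{lem:partition-functions}) carries $\phi_\beta(\omega)\rho(\omega(q))$ to $\phi_\beta(\Lambda')\prod\mathbb 1(\nu\nsim\nu')\cdot\prod\rho(\nu(q))$, which is the polymer model with weights $\phi_\beta(\nu)\rho(\nu(q))$, and then invoke \cite[Proposition~5.3]{fv2017}. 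One should also remark that $\mathbb{E}_{\beta,N}[W_\gamma] > 0$ for $\beta > \beta_0(m)$ (it equals $\exp$ of a convergent real series), so the logarithm is well-defined, and that the identity is independent of the choice of oriented surface $q$ since the left-hand side is.
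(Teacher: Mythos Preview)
Your proof is correct and follows exactly the paper's approach: combine Lemma~\ref{lem:dec5} with the cluster expansions of Lemma~\ref{lem:partition-functions} and~\eqref{eq:dec5.1} to obtain $\log \mathbb{E}_{\beta,N}[W_\gamma] = \log Z_{\beta,N}[q] - \log Z_{\beta,N}$, and then subtract termwise using absolute convergence. One small slip (inherited from a typo in the text just before the proposition): the identity you quote should read $Z_{\beta,N}[q] = \sum_{\omega} \phi_\beta(\omega)\rho(\omega(q))$, not $\log Z_{\beta,N}[q] = \ldots$; this is in fact what your first display actually uses.
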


\begin{proof}
    Using Lemma~\ref{lem:dec5} and then Lemma~\ref{lem:partition-functions} and \eqref{eq:dec5.1} we conclude that
    \[
        \log \mathbb{E}_{\beta,N}[W_\gamma] = \log \frac{Z_{\beta,N}[q]}{Z_{\beta,N}},
    \]
    which is what we wanted to prove.
\end{proof}
\begin{remark}
   Notice that Proposition~\ref{proposition: the cluster expansion} implies that $\mathbb{E}_{\beta,N}[W_\gamma] \in (0,1]$ when $\beta > \beta_0(m)$. This fact is not clear from the start since $W_\gamma \in \{-1,1 \}$ for every $\sigma \in \Omega^1(B_N,\mathbb{Z}_2) $. The positivity of $\mathbb{E}_{\beta,N}[W_\gamma]$ was pointed out in \cite{c2019} and proved there as a consequence of duality. Here we obtain the conclusion as a result of convergence of the cluster expansion.
\end{remark}

\section{Estimates for the cluster expansion}

Throughout this section, we assume that \( \gamma\) is a simple loop, and that \( q\) is an oriented surface with \( \partial q = \gamma.\) Recall that we use the notation  \( \ell = |\gamma| \). Recall also that \( \ell_c\) denotes the number of corners of $\gamma$, i.e., pairs of non-parallel edges in $\gamma$ that are both in the boundary of some common plaquette, similarly recall that \( \ell_{b}\) denotes the number of bottlenecks in $\gamma$, i.e., pairs \( (e,e')\) of parallel edges in $\gamma$ that are both in the boundary of some common plaquette. 
From now on, we also assume $G = \mathbb{Z}_2.$

The main goal of this section is to provide proofs of the following three propositions.
\begin{proposition}\label{proposition: m = 1 contribution new}
    Let $\beta >  \beta_0(m) .$  Further, let \( \beta^* \in (\beta_0(m) , \beta). \) Then 
    \begin{equation*}
        \begin{split}
            0 \leq \sum_{\mathcal{V} \in \Xi_1 } \bigl( \Psi_\beta(\mathcal{V})-\Psi_{\beta,q}(\mathcal{V}) \bigr) - \ell H(\gamma)  \leq D_1^* \ell e^{-16(m-1)\beta},
        \end{split}
    \end{equation*}
    where
    \[
    H_{m,\beta}(\gamma)=e^{-8(m-1)\beta}
            +
            \bigl(6(m-1)-\frac{2\ell_c-2\ell_{b}}{\ell}\bigr) e^{-4(4(m-1)-2)\beta}
    \]
    and \( D_1^* = D_1(\beta^*) = O_\beta(1)\) is defined in~\eqref{eq: D1 new}.
\end{proposition}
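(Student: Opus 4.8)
The plan is to isolate the contribution of single-vortex clusters $\mathcal{V}\in\Xi_1$ to the cluster expansion~\eqref{eq: expansion equation} and to identify its leading terms by organizing vortices according to their support size. Recall that for $\mathcal{V}=\{\nu\}\in\Xi_1$ we have $U(\mathcal{V})=1$, so $\Psi_\beta(\mathcal{V})-\Psi_{\beta,q}(\mathcal{V})=\phi_\beta(\nu)\bigl(1-\rho(\nu(q))\bigr)=e^{-4\beta|\support\nu|}\bigl(1-\rho(\nu(q))\bigr)$, and this quantity is $0$ when $\nu(q)=0$ and equals $2e^{-4\beta|\support\nu|}$ when $\nu(q)=1$ (since $\rho$ takes values in $\{-1,1\}$). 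In particular every term is nonnegative, which immediately gives the lower bound $0$ in the proposition. So the sum over $\Xi_1$ becomes $2\sum_{\nu\in\Lambda:\,\nu(q)=1}e^{-4\beta|\support\nu|}$.

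Next I would stratify this sum by $k=|(\support\nu)^+|$, i.e. $|\support\nu|=2k$, writing the sum as $2\sum_{k\ge 2(m-1)}e^{-8\beta k}\,\#\{\nu\in\Lambda:\nu(q)=1,\ |(\support\nu)^+|=k\}$. By Lemma~\ref{lemma: small vortices} the possible values of $k$ are $2(m-1)$ and then $\ge 4(m-1)-2$, so the two leading strata are $k=2(m-1)$, contributing at order $e^{-8(m-1)\beta}$, and $k=4(m-1)-2$, contributing at order $e^{-4(4(m-1)-2)\beta}$; everything with $k\ge 4(m-1)-1$ is lumped into the error, which after using a crude counting bound (Lemma~\ref{lemma: the number of vortices}, giving $\le M^{2k-1}$ vortices through a fixed plaquette, summed over the $\le \ell$ plaquettes touching $\gamma$ that a vortex with $\nu(q)=1$ must be near) is geometrically dominated and is $O_\beta(\ell e^{-4(4(m-1)-1)\beta})=O_\beta(\ell e^{-16(m-1)\beta})$ for $m\ge 3$ — this is where the explicit constant $D_1^*=D_1(\beta^*)$ enters, obtained by replacing $\beta$ by $\beta^*$ in the geometric tail and absorbing $e^{-(\beta-\beta^*)(\cdot)}$ factors. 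The key combinatorial input is thus an \emph{exact} count of minimal and next-to-minimal vortices $\nu$ with $\nu(q)=1$: by Lemma~\ref{lemma: minimal vortices} each vortex with $|(\support\nu)^+|=2(m-1)$ is $d(g\,e)$ for a unique edge $e$ and $g=1$, and by Lemma~\ref{lemma: minimal 2} each vortex with $|(\support\nu)^+|=4(m-1)-2$ is $d\sigma$ with $(\support\sigma)^+=\{e,e'\}$ for a pair of distinct edges sharing a plaquette. Since $\nu(q)=d\sigma(q)=\sigma(\gamma)=\sum_{e\in\support\sigma}\mathbb{1}(e\in\gamma)\bmod 2$, the condition $\nu(q)=1$ translates into: for minimal vortices, $e\in\support\gamma$, giving exactly $\ell$ of them; for the next size, exactly one of $e,e'$ lies in $\gamma$.

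Counting the $4(m-1)-2$ vortices with $\nu(q)=1$ is the main bookkeeping step. For a fixed edge $e\in\support\gamma$, the partner edge $e'$ ranges over edges sharing a plaquette with $e$; there are $4(m-1)-2$ such edges total (the standard count: $2(m-1)$ plaquettes contain $e$, each contributing $3$ further edges, minus double-counting of the $2(m-2)$ collinear continuations... I would recompute this carefully). Among these, the ones to \emph{exclude} from the count (because then $e'\in\gamma$ too, making $\nu(q)=0$) are precisely the corners and bottlenecks of $\gamma$ at $e$; summing over $e\in\gamma$ double-counts each corner/bottleneck pair, so we must subtract $2\ell_c+2\ell_b$, but corners where $e'\notin\gamma$ are what we keep. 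Assembling: the number of size-$2(m-1)$ vortices with $\nu(q)=1$ is $\ell$, and the number of size-$(4(m-1)-2)$ ones is $\bigl(4(m-1)-2\bigr)\ell - (2\ell_c+2\ell_b)$ up to a correction I must verify — this should produce the coefficient $6(m-1)-\tfrac{2\ell_c-2\ell_b}{\ell}$ after multiplying by $2$ and dividing by $\ell$, since $2\cdot\tfrac{1}{2}(4(m-1)-2)=4(m-1)-2$... so in fact the correct pairing count must give $3(m-1)\ell$ minus corrections to yield $6(m-1)$ after the factor $2$; the discrepancy between $4(m-1)-2$ and $6(m-1)$ will be resolved by the fact that $d(g\,e)=d(g\,e')$ can coincide, i.e. each size-$(4(m-1)-2)$ vortex corresponds to an unordered pair, and some such $\nu$ arise from $\sigma$ with $\support\sigma$ of size $1$ — I would track this via Lemma~\ref{lemma: minimal 2} precisely. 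The main obstacle is getting these multiplicities exactly right (the factor-of-two conventions between oriented/unoriented plaquettes, ordered/unordered edge pairs, and the mod-$2$ reduction defining $\nu(q)$), since the final coefficients in $H_{m,\beta}(\gamma)$ are asserted to be exact; once the counts are pinned down, the estimate follows by comparing with the geometric series in $e^{-4\beta}$ controlled by Lemma~\ref{lemma: the number of vortices}.
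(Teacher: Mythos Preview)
Your overall architecture is the same as the paper's: reduce to $2\sum_{\{\nu\}\in\Xi_{1,1^+,q}}\phi_\beta(\nu)$, split by Lemma~\ref{lemma: small vortices} into sizes $2(m-1)$, $4(m-1)-2$, and $\ge 4(m-1)$, compute the first two strata exactly via Lemmas~\ref{lemma: minimal vortices}--\ref{lemma: minimal 2}, and bound the tail. The nonnegativity and the identification of the leading term are fine.

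The genuine gap is in your tail estimate. You assert that a vortex $\nu$ with $\nu(q)=1$ ``must be near'' $\gamma$, and propose to sum Lemma~\ref{lemma: the number of vortices} over ``the $\le\ell$ plaquettes touching $\gamma$''. This is false: a vortex can link $\gamma$ from arbitrary distance (think of a long thin tube encircling the loop). The only a priori consequence of $\nu(q)\neq 0$ is that $\support\nu$ meets $\support q$, which would give you a factor of $|q|$ (area), not $\ell$ (perimeter), and that is too weak for the stated bound. The paper closes this gap with an additional geometric input, Lemma~\ref{lemma: minimal size if at distance}: if $\nu(q)=1$ and $\dist(\support\nu,\gamma)=j$, then $|(\support\nu)^+|\ge \hat C_m(j+1)$. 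This distance--size tradeoff is what allows one to sum over plaquettes at all distances from $\gamma$ while keeping the total proportional to $\ell$; it is packaged as Lemma~\ref{lemma: new q bound}, which the paper invokes with $k=4(m-1)$ to produce the constant $D_1^*$ in~\eqref{eq: D1 new}. Your sketch does not contain this idea, and without it the tail bound does not go through with the required $\ell$ prefactor.

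A secondary point: your count of the $4(m-1)-2$ stratum is left unresolved. The paper carries this out as Lemma~\ref{lemma: m1min2 new}, splitting into the parallel case (Figure~\ref{table: minimal configurations}(b), giving $2(m-1)\ell-2\ell_b$ vortices) and the corner case (Figure~\ref{table: minimal configurations}(c), giving $4(m-1)\ell-2\ell_c$), for a total of $6(m-1)\ell-2\ell_c-2\ell_b$. There is no subtlety with coinciding $d\sigma$'s here; each such vortex corresponds to a unique unordered pair $\{e,e'\}$, and the factor $2$ you are looking for is the one already present from $1-\rho(\nu(q))=2$.
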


\begin{proposition}\label{proposition: m >=2 contribution new}
    Let $\beta >  \beta_0(m) .$  Further, let \( \beta^* \in \bigl(\beta_0(m) , \beta\bigr). \) Then 
     \begin{equation*}
        \begin{split}
            &
            \sum_{\mathcal{V} \in \Xi_{2^+} } \Bigl|  \Psi_\beta(\mathcal{V}) - \Psi_{\beta,q}(\mathcal{V}) \Bigr|
            \leq
            2D_1^* \ell e^{-16(m-1)\beta },
        \end{split}
    \end{equation*} 
    where \( D_1^* = D_1(\beta^*) = O_\beta(1)\) is defined in~\eqref{eq: D1 new}.
\end{proposition}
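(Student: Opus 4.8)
The plan is to reduce the estimate to the cluster-expansion convergence bound of Lemma~\ref{lemma: the assumption on a}, applied at the auxiliary temperature $\beta^*$, exploiting that every cluster in $\Xi_{2^+}$ has total mass $|\mathcal V|\ge 4(m-1)$ to gain the extra factor $e^{-16(m-1)\beta}$ over the naive $e^{-8(m-1)\beta}$. First, since $G=\mathbb Z_2$ and $\rho$ is $\{-1,1\}$-valued, $\Psi_\beta(\mathcal V)-\Psi_{\beta,q}(\mathcal V)=\Psi_\beta(\mathcal V)\bigl(1-\rho(\mathcal V(q))\bigr)$ has modulus $\le 2|U(\mathcal V)|\phi_\beta(\mathcal V)$ and vanishes unless $\mathcal V(q)\neq 0$. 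Because $\mathcal V(q)=\sum_\nu n_{\mathcal V}(\nu)\nu(q)$ in $\mathbb Z_2$, the condition $\mathcal V(q)\neq 0$ forces at least one vortex $\nu^*\in\mathcal V$ with $\nu^*(q)\neq 0$; a union bound over such $\nu^*$ gives
\[
\sum_{\mathcal V\in\Xi_{2^+}}\bigl|\Psi_\beta(\mathcal V)-\Psi_{\beta,q}(\mathcal V)\bigr|
\le 2\sum_{\nu^*\colon\nu^*(q)\neq 0}\ \sum_{\mathcal V\in\Xi_{2^+}\colon \nu^*\in\mathcal V}|U(\mathcal V)|\,\phi_\beta(\mathcal V).
\]

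For the inner sum I would use that $n(\mathcal V)\ge 2$ together with $|(\support\nu)^+|\ge 2(m-1)$ for every vortex (Lemma~\ref{lemma: small vortices}; clusters whose support meets $\partial B_N$ have mass $\gtrsim\dist(\gamma,\partial B_N)$, hence contribute $O(e^{-cN})$ and are disregarded, which suffices since the relevant theorems take $N\to\infty$). Thus $|\mathcal V|\ge 4(m-1)$ on $\Xi_{2^+}$, and writing $\phi_\beta(\mathcal V)=\phi_{\beta^*}(\mathcal V)\,e^{-4(\beta-\beta^*)|\mathcal V|}\le e^{-16(m-1)(\beta-\beta^*)}\phi_{\beta^*}(\mathcal V)$, then enlarging the sum by dropping the restriction to $\Xi_{2^+}$ and invoking Lemma~\ref{lemma: the assumption on a} at $\beta^*$ (valid since $\beta^*>\beta_0(m)$), one gets
\[
\sum_{\mathcal V\in\Xi_{2^+}\colon \nu^*\in\mathcal V}|U(\mathcal V)|\,\phi_\beta(\mathcal V)
\le e^{-16(m-1)(\beta-\beta^*)}\sum_{\mathcal V\colon\nu^*\in\mathcal V}|\Psi_{\beta^*}(\mathcal V)|
\le e^{-16(m-1)(\beta-\beta^*)}\,e^{|\support\nu^*|/3}\,\phi_{\beta^*}(\nu^*).
\]

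It remains to sum over $\nu^*$ with $\nu^*(q)\neq 0$. I would anchor $\nu^*$ at a plaquette $p\in\support\nu^*$ realising $\dist(\support\nu^*,\gamma)=j$: the number of plaquettes at distance $j$ from $\gamma$ is at most $\ell\,C_m\max(1,j)^{m-1}$ (Lemma~\ref{lemma: plaquettes on a given distance} summed over the $\ell$ edges of $\gamma$), the number of vortices of a given size $k:=|(\support\nu^*)^+|$ through $p$ is at most $M^{2k-1}$ (Lemma~\ref{lemma: the number of vortices}), and by Lemma~\ref{lemma: minimal size if at distance} — using Lemma~\ref{lemma: polymer and box} and Lemma~\ref{lemma: small vortices} for the case $j=0$ — one has $k\ge\max\bigl(2(m-1),\hat C_m(j+1)\bigr)$. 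Since $|\support\nu^*|=2k$, the $\nu^*$-summand is $M^{2k-1}e^{2k/3}e^{-4\beta^*k}=M^{-1}x^k$ with $x:=M^2e^{2/3-4\beta^*}<1$ (because $\beta^*>\beta_0(m)=\tfrac12\log M+\tfrac16$), so the $k$-sum is geometric with leading power $x^{\max(2(m-1),\hat C_m(j+1))}$, and the $j$-sum converges since the geometric decay in $j$ dominates $\max(1,j)^{m-1}$. Hence $\sum_{\nu^*\colon\nu^*(q)\neq 0}e^{|\support\nu^*|/3}\phi_{\beta^*}(\nu^*)\le C\,\ell$ for a constant $C=C(m,\beta^*)$. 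Combining the three displays and absorbing $e^{16(m-1)\beta^*}$ and $C$ into the constant $D_1^*=D_1(\beta^*)$ of~\eqref{eq: D1 new} gives the claimed bound $2D_1^*\ell e^{-16(m-1)\beta}$.

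I expect the main obstacle to be precisely the factor-of-two improvement in the exponent: applying Lemma~\ref{lemma: the assumption on a} directly at $\beta$ only yields $e^{-8(m-1)\beta}$, and the second $e^{-8(m-1)\beta}$ must be extracted from the ``second vortex'' present in every cluster of $\Xi_{2^+}$. This is exactly what the split $\phi_\beta=\phi_{\beta^*}\cdot e^{-4(\beta-\beta^*)|\cdot|}$ combined with the mass bound $|\mathcal V|\ge 4(m-1)$ accomplishes, at the harmless cost of a $\beta^*$-dependent multiplicative constant; the remaining work is the same kind of geometric bookkeeping already carried out in the proof of Proposition~\ref{proposition: m = 1 contribution new}, together with the routine boundary-of-box caveat mentioned above.
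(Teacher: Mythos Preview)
Your proposal is correct and follows essentially the same strategy as the paper: both use that $\mathcal V\in\Xi_{2^+}$ forces $|\mathcal V|\ge 4(m-1)$ (via Lemma~\ref{lemma: small vortices}), extract $e^{-16(m-1)\beta}$ through the split $\phi_\beta(\mathcal V)=e^{-4(\beta-\beta^*)|\mathcal V|}\phi_{\beta^*}(\mathcal V)$, and then control the remaining sum by the geometric bookkeeping of Lemmas~\ref{lemma: the number of vortices}, \ref{lemma: plaquettes on a given distance}, and~\ref{lemma: minimal size if at distance}. The paper's proof is shorter only because it has already packaged these steps: it observes the inclusion $\Xi_{2^+,1^+,q}\subset\Xi_{1^+,4(m-1)^+,q}$ and then invokes Lemma~\ref{lemma: new q bound} with $k=4(m-1)$ as a black box, whereas you anchor at a vortex $\nu^*$ and apply Lemma~\ref{lemma: the assumption on a} directly---effectively reproving Lemma~\ref{lemma: new q bound} inline. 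One small caveat: the constant your argument produces is not literally the $D_1^*$ of~\eqref{eq: D1 new}, which is tailored to the plaquette-anchored bound of Lemma~\ref{lemma: new q bound}; if you want the stated constant verbatim, simply replace your last two paragraphs by a citation of Lemma~\ref{lemma: new q bound} after the inclusion step.
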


\begin{proposition}\label{proposition: uniformity in m' new}
    Let $\beta > \beta_0(m) $ and \( k \geq 1.\)  Further, let \( \beta^* \in \bigl(\beta_0(m) , \beta\bigr). \)  Then
    \begin{equation*}
        \begin{split}
            &
            \sum_{\mathcal{V} \in \Xi_{1^+,k^+}} \bigl| \Psi_\beta(\mathcal{V}) - \Psi_{\beta, q}(\mathcal{V}) \bigr|
            \leq 
            C_m  C_{\beta^*} \ell \sum_{j=0}^\infty 
            \max(1,j)^{m-1}  e^{-4(\beta-\beta^*) \max(k,\hat C_m(j+1))},
        \end{split}
    \end{equation*}  
    where \( C_\beta^* \) is defined in~\eqref{eq: cbeta*}.
\end{proposition}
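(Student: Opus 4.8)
\textbf{Proof plan for Proposition~\ref{proposition: uniformity in m' new}.}

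The plan is to bound the tail sum $\sum_{\mathcal{V} \in \Xi_{1^+,k^+}} |\Psi_\beta(\mathcal{V}) - \Psi_{\beta,q}(\mathcal{V})|$ by splitting the clusters according to their distance $j = \dist(\support \mathcal{V}, \gamma)$ from the loop. First I would note that the summand vanishes unless $\mathcal{V}(q) \neq 0$ (otherwise $\rho(\mathcal{V}(q)) = 1$ and the two activities coincide); and by Lemma~\ref{lemma: polymer and box} applied to some vortex in $\mathcal{V}$ with nonzero pairing, having $\mathcal{V}(q) \neq 0$ forces at least one vortex of $\mathcal{V}$ to reach the box containing $\support \gamma$. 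Using $|\Psi_\beta(\mathcal{V}) - \Psi_{\beta,q}(\mathcal{V})| \le 2|\Psi_\beta(\mathcal{V})| = 2|U(\mathcal{V})| e^{-4\beta|\mathcal{V}|}$ and then splitting the exponent as $e^{-4\beta|\mathcal{V}|} = e^{-4\beta^*|\mathcal{V}|} e^{-4(\beta-\beta^*)|\mathcal{V}|}$, the factor $e^{-4\beta^*|\mathcal{V}|}$ together with $|U(\mathcal{V})|$ is absorbed by the convergence estimate of Lemma~\ref{lemma: the assumption on a} at parameter $\beta^*$: this is exactly the kind of bookkeeping where the constant $C_{\beta^*}$ of~\eqref{eq: cbeta*} is designed to control $\sum_{\mathcal{V} \ni \nu} |U(\mathcal{V})| e^{-4\beta^*|\mathcal{V}|}$ summed over clusters through a fixed vortex.

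Next I would organize the remaining sum by choosing an ``anchor'': a plaquette $p \in \support \mathcal{V}$ that lies in the box bordering $\gamma$, or more precisely a vortex $\nu \in \mathcal{V}$ with $\dist(\support \nu, \gamma) = j$ realizing the minimal distance of the cluster to $\gamma$. On the one hand, $|\mathcal{V}| \ge k$ since $\mathcal{V} \in \Xi_{k^+}$ in the relevant regime — wait, more carefully, $\Xi_{1^+,k^+}$ means $n(\mathcal{V}) \ge 1$ and $|\mathcal{V}| \ge k$, so directly $|\mathcal{V}| \ge k$. On the other hand, that anchor vortex $\nu$ satisfies, by Lemma~\ref{lemma: minimal size if at distance} (with the surface $q$ and using $\nu(q) \neq 0$ if $\nu$ is the one carrying the pairing, or arguing via connectedness of the cluster and Lemma~\ref{lemma: polymer and box} otherwise), $|\support \nu|^+ \ge \hat C_m(j+1)$, hence $|\mathcal{V}| \ge \hat C_m(j+1)$. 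Combining, $|\mathcal{V}| \ge \max(k, \hat C_m(j+1))$, which produces the factor $e^{-4(\beta-\beta^*)\max(k,\hat C_m(j+1))}$ in the bound. The sum over possible anchor plaquettes $p$ on distance exactly $j$ from a fixed edge of $\gamma$ is controlled by Lemma~\ref{lemma: plaquettes on a given distance}, contributing $C_m \max(1,j)^{m-1}$, and summing the anchor edge over $\gamma$ gives the overall factor $\ell$. Assembling these pieces yields
\[
\sum_{\mathcal{V} \in \Xi_{1^+,k^+}} | \Psi_\beta(\mathcal{V}) - \Psi_{\beta, q}(\mathcal{V}) |
\leq
C_m  C_{\beta^*} \ell \sum_{j=0}^\infty
\max(1,j)^{m-1}  e^{-4(\beta-\beta^*) \max(k,\hat C_m(j+1))}.
\]

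The main obstacle I anticipate is the careful accounting to avoid overcounting: a single cluster $\mathcal{V}$ may touch $\gamma$ near several edges and may have several vortices at the minimal distance, so the ``anchor'' must be pinned down by a deterministic rule (e.g.\ lexicographically smallest eligible edge of $\gamma$, then lexicographically smallest plaquette of $\support \mathcal{V}$ at that distance), and one must check that with this rule the map $\mathcal{V} \mapsto (\text{anchor edge},\, \text{anchor plaquette})$ lets us dominate the sum by (number of anchor pairs) $\times$ (sup over clusters through a fixed plaquette of the absorbed weight). The other delicate point is invoking Lemma~\ref{lemma: minimal size if at distance} correctly: that lemma is stated for a single vortex $\nu$ with $\nu(q) = 1$, so if the anchor vortex is not the one pairing nontrivially with $q$, I would instead use that the cluster is connected in $\mathcal{G}_2$ and that \emph{some} vortex of $\mathcal{V}$ reaches the box meeting $\gamma$, deducing a diameter lower bound on $\support \mathcal{V}$ of order $j$ and hence $|\mathcal{V}| \gtrsim j$ by connectivity in a bounded-degree graph — this reproduces the $\hat C_m(j+1)$ lower bound up to adjusting $\hat C_m$. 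Everything else is the now-standard cluster-expansion tail estimate with a distance-stratified exponential decay, and is routine given Lemmas~\ref{lemma: the assumption on a}, \ref{lemma: the number of vortices}, \ref{lemma: plaquettes on a given distance}, and~\ref{lemma: minimal size if at distance}.
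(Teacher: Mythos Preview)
Your approach is essentially the same as the paper's: the paper reduces to $\sum_{\mathcal{V}\in\Xi_{1^+,k^+}}|\Psi_\beta-\Psi_{\beta,q}| = 2\sum_{\mathcal{V}\in\Xi_{1^+,k^+,q}}|\Psi_\beta(\mathcal{V})|$ and then invokes Lemma~\ref{lemma: new q bound}, whose proof is precisely the distance-stratified argument you outline (split by $j=\dist(\support\mathcal{V},\gamma)$, use Lemma~\ref{lemma: minimal size if at distance} for the size lower bound, Lemma~\ref{lemma: plaquettes on a given distance} for the plaquette count, and Lemma~\ref{lemma: new upper bound} for the $\beta^*$-splitting).

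Two of your anticipated obstacles are handled more simply than you suggest. First, overcounting is harmless: the paper simply bounds $\sum_{\mathcal{V}:\,\dist(\support\mathcal{V},\gamma)=j}|\Psi_\beta(\mathcal{V})| \le \sum_{p:\,\dist(p,\gamma)=j}\sum_{\mathcal{V}\in\Xi_{1^+,\max(k,\hat C_m(j+1))^+,p}}|\Psi_\beta(\mathcal{V})|$, accepting that a cluster may be counted once per plaquette it has at distance $j$; no deterministic anchor rule is needed for an upper bound. Second, for Lemma~\ref{lemma: minimal size if at distance} you need not worry about the anchor vortex: since $\mathcal{V}(q)\neq 0$, some $\nu\in\mathcal{V}$ has $\nu(q)\neq 0$; for that $\nu$ one has $\dist(\support\nu,\gamma)\ge j$, and the lemma (which is monotone in the distance) gives $|(\support\nu)^+|\ge\hat C_m(j+1)$, hence $|\mathcal{V}|\ge\hat C_m(j+1)$ directly, without any connectivity-in-$\mathcal{G}_2$ detour.
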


Before giving the proofs of Propositions~\ref{proposition: m = 1 contribution new},~\ref{proposition: m >=2 contribution new},~and~\ref{proposition: uniformity in m' new}, we need several auxiliary results.

\begin{lemma}\label{lemma: m1min1 new}
    Let \( \beta \geq 0.\) Then 
    \begin{equation*}\label{eq: m1min} 
        \sum_{\mathcal{V} \in \Xi_{1,2(m-1),q} } \Psi_\beta(\mathcal{V}) 
        =
        \ell e^{-8(m-1)\beta}.
    \end{equation*}
\end{lemma}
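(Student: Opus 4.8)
The plan is to identify exactly which vortex clusters $\mathcal{V} \in \Xi_{1,2(m-1),q}$ contribute to the sum and to evaluate $\Psi_\beta(\mathcal{V})$ on each. Since $n(\mathcal{V}) = 1$, a cluster in $\Xi_1$ is just a single vortex $\nu \in \Lambda$ taken with multiplicity one, so $U(\mathcal{V}) = 1$ and $\Psi_\beta(\mathcal{V}) = \phi_\beta(\nu)\rho\bigl(\nu(q)\bigr)$. The constraint $|\mathcal{V}| = 2(m-1)$ means $|(\support \nu)^+| = 2(m-1)$, which by Lemma~\ref{lemma: small vortices} is the smallest possible support size for a nonzero closed $2$-form whose support avoids boundary plaquettes. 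By Lemma~\ref{lemma: minimal vortices}, every such $\nu$ is a minimal vortex around some edge $e$, i.e. $\nu = d(e)$ (here $g = 1$ is the only nonzero element of $\mathbb{Z}_2$), and conversely each edge $e$ gives exactly one such minimal vortex $d(e)$. So the map $e \mapsto d(e)$ is a bijection between edges and minimal vortices of this size.

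Next I would compute $\phi_\beta(\nu) = e^{-4\beta|\mathcal{V}|} = e^{-4\beta \cdot 2(m-1)} = e^{-8(m-1)\beta}$ for every such vortex, which is a constant independent of which edge $e$ we picked. So the whole sum reduces to counting, with the correct sign $\rho\bigl(\nu(q)\bigr) = (-1)^{\nu(q)}$, how many minimal vortices $\nu = d(e)$ satisfy $\nu(q) \neq 0$. The key identity is $d(e)(q) = e(\partial q) = e(\gamma)$ (the discrete Stokes/adjointness relation, as used in Lemma~\ref{lemma: polymer and box}), and $e(\gamma) \neq 0$ precisely when $e \in \support\gamma$, in which case $e(\gamma) = \pm 1$ and $\rho\bigl(\nu(q)\bigr) = -1$... wait, we need $\rho(1) = e^{\pi i} = -1$, so actually each contributing term is $-e^{-8(m-1)\beta}$. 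But the sum is asserted to equal $+\ell e^{-8(m-1)\beta}$, so I need to be careful: the correct reading is that $\Psi_\beta(\mathcal{V})$ here should be the term appearing in $\Psi_\beta(\mathcal{V}) - \Psi_{\beta,q}(\mathcal{V}) = \Psi_\beta(\mathcal{V})(1 - \rho(\mathcal{V}(q)))$, so I would double-check against the surrounding Propositions; plausibly the intended statement already has $\rho$ folded in, or $\Xi_{1,2(m-1),q}$ restricts to clusters with $\mathcal{V}(q)\neq 0$ and one tracks $\Psi_\beta(\mathcal{V})(1-\rho(\mathcal{V}(q)))$ with $1 - (-1) = 2$ — I will reconcile the factor of $2$ and the sign with the normalization actually used, since $|\{e \in \support\gamma\}| = \ell$ and $\tfrac{1}{2}\cdot 2\ell e^{-8(m-1)\beta}$ or $\ell e^{-8(m-1)\beta}$ must come out. [Here I would insert the bookkeeping: counting each unoriented edge once, $\nu(q) = 1$ for exactly $\ell$ minimal vortices, giving the stated total.]

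The only mild subtlety — and the step I expect to need the most care — is the boundary-plaquette caveat in Lemmas~\ref{lemma: small vortices} and~\ref{lemma: minimal vortices}: those lemmas classify $\omega$ with $|(\support\omega)^+| = 2(m-1)$ only under the assumption that $\support\omega$ contains no boundary plaquettes of $B_N$. I would argue that since $\support q \subseteq B_N$ is a fixed loop-spanning surface sitting well inside $B_N$ (we take $N$ large, or more precisely any cluster meeting $q$ with small support is confined to the interior by Lemma~\ref{lemma: polymer and box}), the minimal-size vortices relevant to the sum indeed avoid $\partial B_N$; alternatively the lemma statement is understood with $N$ large enough that this holds, and the identity is exact for all such $N$. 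Everything else — the bijection with edges, the constant activity, the Stokes identity $d(e)(q) = e(\gamma)$, and the count — is routine, so the proof is essentially: classify, count, multiply.
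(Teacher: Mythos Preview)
Your approach is the same as the paper's: classify the minimal vortices via Lemma~\ref{lemma: minimal vortices}, use $\nu(q)=\sigma(\gamma)$ to see which ones hit $\gamma$, count, and multiply by the constant activity. The paper's proof is two sentences.

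Your confusion about signs and the factor of $2$ comes from misreading the definition of $\Psi_\beta$. By definition $\Psi_\beta(\mathcal{V}) = U(\mathcal{V})\,\phi_\beta(\mathcal{V})$ --- there is \emph{no} factor $\rho(\mathcal{V}(q))$; that factor belongs to $\Psi_{\beta,q}$. The subscript $q$ in the index set $\Xi_{1,2(m-1),q}$ only imposes the constraint $\mathcal{V}(q)\neq 0$. So for $\mathcal{V}=\{\nu\}$ with $|(\support\nu)^+|=2(m-1)$ you have $U(\{\nu\})=1$ and $\Psi_\beta(\{\nu\})=\phi_\beta(\nu)=e^{-8(m-1)\beta}$, a positive constant --- no sign, no $\rho$. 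The count of such $\nu$ with $\nu(q)\neq 0$ is exactly $\ell$, one minimal vortex per edge of $\gamma$, and the identity drops out. The factor $2$ you are looking for appears one level up, in the proof of Proposition~\ref{proposition: m = 1 contribution new}, where $\Psi_\beta(\mathcal{V})-\Psi_{\beta,q}(\mathcal{V}) = \Psi_\beta(\mathcal{V})\bigl(1-\rho(\mathcal{V}(q))\bigr) = 2\Psi_\beta(\mathcal{V})$ for $\mathcal{V}(q)=1$; it is not part of the present lemma.
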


\begin{proof}
    Let \( \{ \nu\} \in \Xi_{1,2(m-1)}. \) Then \( \Psi_\beta\bigl(\{\nu\}\bigr) = \phi_\beta(\nu) = e^{-8(m-1)\beta}.\) By Lemma~\ref{lemma: minimal vortices}, we have  \( \nu(q) \neq 1\) if and only if \( \nu\) is a minimal vortex around some edge \( e \in \gamma.\) Combining these observations, the  conclusion immediately follows.
\end{proof}

\begin{lemma}\label{lemma: m1min2 new}
    Let \( \beta \geq 0. \) Then 
    \begin{equation*}\label{eq: m1min2}
        \sum_{\mathcal{V} \in \Xi_{1,2(m-1)-2,q} } \Psi_\beta(\mathcal{V})
        =
        \bigl(6(m-1)\ell-2\ell_c-2\ell_{b}\bigr) e^{-4(4(m-1)-2)\beta}.
    \end{equation*}
\end{lemma}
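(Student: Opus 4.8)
The claim is the ``second order'' analogue of Lemma~\ref{lemma: m1min1 new}: now we must identify all single vortices $\nu$ with $\bigl|(\support\nu)^+\bigr| = 2(m-1)-2 = 4(m-1)-2 - 2(m-1)$... wait, $2(m-1)-2$ — actually re-reading, the size in $\Xi_{1,2(m-1)-2,q}$ refers to the ``$|\mathcal V|$'' parameter, which by the activity convention $\phi_\beta(\mathcal V) = e^{-4\beta|\mathcal V|}$ gives weight $e^{-4\beta(4(m-1)-2)}$ precisely when $|\mathcal V| = 4(m-1)-2$; so the indexing convention here has $|\mathcal V|$ counting positively oriented plaquettes. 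Thus I would first pin down the convention and note $\Psi_\beta(\{\nu\}) = \phi_\beta(\nu) = e^{-4\beta(4(m-1)-2)}$ for every such $\nu$, so the whole sum reduces to \emph{counting} the vortices $\{\nu\}\in\Xi_{1,4(m-1)-2}$ with $\nu(q)\neq 0$, i.e.\ $\rho(\nu(q)) = -1$, i.e.\ $\nu(q) = 1$.

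\textbf{Key steps.} First, invoke Lemma~\ref{lemma: small vortices} and Lemma~\ref{lemma: minimal 2}: a vortex $\nu$ with $\bigl|(\support\nu)^+\bigr| = 4(m-1)-2$ and no boundary plaquettes is exactly $d\sigma$ for $\sigma\in\Omega^1(B_N,\mathbb{Z}_2)$ with $(\support\sigma)^+ = \{e,e'\}$ for two distinct edges $e,e'$ sharing a common plaquette (so $(\hat\partial e)^+\cap(\hat\partial e')^+\neq\emptyset$). Conversely, for any such pair $\{e,e'\}$, $\nu = d\sigma$ (with $\sigma$ supported on $\{e,e'\}$) is a closed $2$-form whose support induces a connected subgraph of $\mathcal G_2$ — I would check connectivity using that $e,e'$ share a plaquette — hence a genuine vortex, and one computes $\bigl|(\support\nu)^+\bigr| = 4(m-1)-2$ (the two ``stars'' of size $2(m-1)$ each, overlapping in exactly two plaquettes, namely the two plaquettes containing both $e$ and $e'$; here the cases ``parallel $e,e'$'' and ``non-parallel $e,e'$'' both give the same total size, as Figure~\ref{table: minimal configurations} illustrates). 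So there is a bijection between $\Xi_{1,4(m-1)-2}$ and unordered pairs $\{e,e'\}$ of distinct edges lying in a common plaquette. Second, for such $\nu = d\sigma$, use $\nu(q) = \sigma(\gamma) = \sum_{e\in\gamma}\sigma(e)$ (the duality identity cited in Lemma~\ref{lemma: polymer and box}'s proof), so $\nu(q) = 1$ iff $\gamma$ contains an odd number of the edges in $(\support\sigma)^+ = \{e,e'\}$, i.e.\ exactly one of $e,e'$ lies on $\gamma$. Third, count: the number of pairs $\{e,e'\}$ in a common plaquette with exactly one of them on $\gamma$. For each edge $e\in\gamma$ (there are $\ell$), count the edges $e'\neq e$ sharing a plaquette with $e$ but \emph{not} on $\gamma$. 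Each edge $e$ lies in $2(m-1)$ plaquettes, each plaquette contributing $3$ other edges, but edges shared between two of these plaquettes containing $e$ must be handled: an edge $e'$ parallel to $e$ in a common plaquette lies in exactly one such plaquette; an edge $e'$ non-parallel (a ``corner'' partner) lies in exactly one such plaquette as well; actually the total number of $e'\neq e$ sharing \emph{some} plaquette with $e$ is $6(m-1)$ (this is $2(m-1)$ plaquettes $\times 3$ edges, with no double counting since two distinct plaquettes through $e$ share only the edge $e$). Among these $6(m-1)$ partners, $\ell_c(e) + \ell_b(e)$ of them lie on $\gamma$ — wait, more carefully, $\ell_c$ counts \emph{pairs} $(e,e')$ of non-parallel edges of $\gamma$ in a common plaquette and $\ell_b$ counts pairs of parallel ones, so summing the ``bad'' partners over $e\in\gamma$ double-counts each such pair, giving $2\ell_c + 2\ell_b$ bad incidences total. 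Hence the number of \emph{ordered} $(e,e')$ with $e\in\gamma$, $e'\notin\gamma$, sharing a plaquette, is $6(m-1)\ell - (2\ell_c + 2\ell_b)$, and since exactly one of the pair is on $\gamma$ this ordered count equals the unordered count of valid $\{e,e'\}$. Multiplying by $e^{-4(4(m-1)-2)\beta}$ gives the claimed formula.

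\textbf{Main obstacle.} The delicate point is the combinatorial bookkeeping: confirming that $\bigl|(\support d\sigma)^+\bigr| = 4(m-1)-2$ holds uniformly for \emph{both} the parallel and non-parallel configurations of $\{e,e'\}$ (one must see that the two $2(m-1)$-plaquette stars always overlap in exactly $2$ positively oriented plaquettes), and correctly accounting for the $6(m-1)$ plaquette-neighbors of a fixed edge without over- or under-counting — in particular getting the factor $2$ in $2\ell_c + 2\ell_b$ right by tracking ordered vs.\ unordered pairs. I would also double-check there are no additional vortices of this size that are \emph{not} of the form $d\sigma$ with two-edge support; Lemma~\ref{lemma: minimal 2} rules this out provided $\support\omega$ avoids the boundary of $B_N$, and for $N$ large relative to the (fixed) surface $q$ and the contributing vortices this is automatic, so this is a non-issue in the regime of interest. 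Everything else is a direct application of the already-established duality identity $\nu(q) = \sigma(\gamma)$ and the classification lemmas.
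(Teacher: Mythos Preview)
Your approach is essentially the same as the paper's: both invoke Lemma~\ref{lemma: minimal 2} to identify the vortices of size $4(m-1)-2$ with unordered pairs $\{e,e'\}$ of distinct edges sharing a plaquette, reduce the condition $\nu(q)\neq 0$ to ``exactly one of $e,e'$ lies on $\gamma$'' via the identity $\nu(q)=\sigma(\gamma)$, and then count. The only organizational difference is that the paper splits the final count into the parallel case (yielding $2(m-1)\ell - 2\ell_b$ vortices) and the non-parallel case (yielding $4(m-1)\ell - 2\ell_c$), whereas you handle all $6(m-1)$ plaquette-neighbors of each $e\in\gamma$ at once; the two counts are trivially equivalent.

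One small slip in your parenthetical: two distinct edges $e,e'$ lying in a common plaquette share exactly \emph{one} common plaquette, not two. The size $|(\support d\sigma)^+| = 4(m-1)-2$ arises because the support is the \emph{symmetric difference} of the two stars (the shared plaquette cancels in $\mathbb{Z}_2$), so $2\cdot 2(m-1) - 2\cdot 1 = 4(m-1)-2$. This is immaterial to your argument, however, since you already cite Lemma~\ref{lemma: minimal 2} and Figure~\ref{table: minimal configurations} for the size.
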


\begin{proof}
    Let \( \nu\in \Lambda\) be such that \( |(\support \nu)^+| = 4(m-1)-2. \) Then \( \Psi_\beta\bigl( \{ \nu \} \bigr) = \phi_\beta(\nu) = e^{-4(4(m-1)-2)\beta}.\) 
    By Lemma~\ref{lemma: minimal 2}, there is \( \sigma \in \Omega^1(B_N,G)\) such that \( d \sigma = \nu\) and \(  (\support \sigma)^+  =\{ e_1,e_2 \} ,\) where \( e_1 \) and \( e_2\) are distinct edges and \( (\hat \partial e_1)^+ \cap (\hat \partial e_2)^+ \neq \emptyset.\) 
    Since
    \[
    \nu(q) = \sigma(\gamma) = \sum_{e \in \gamma} \sigma(e),
    \]
    and \( G = \mathbb{Z}_2, \) it follows that 
    \[
    \nu(q) = \begin{cases}
        1 &\text{if } \bigl|\support \gamma \cap \{ e_1,e_2 \}\bigr| = 1\cr0 &\text{else.}
    \end{cases} 
    \]
     From this it follows that
     \( \nu(q) \neq 1\) if and only if one of the following hold.
    \begin{enumerate}[label=(\roman*)]
        \item\label{item: 1st option} The edges \( e_1 \) and \( e_2 \) are parallel (see Figure~\ref{table: minimal configurations}(b)), and exactly one of \( e_1 \) and \( e_2 \) are in \( \support \partial q = \support \gamma.\)
        \item\label{item: 2nd option} The edges \( e_1 \) and \( e_2 \) are not parallel (see Figure~\ref{table: minimal configurations}(c)), and exactly one of \( e_1 \) and \( e_2 \) are in \( \support \partial q = \support \gamma.\)
    \end{enumerate}
    Noting that there is exactly~\( 2(m-1) \ell -2\ell_{b} \) distinct \( \nu \in \Lambda\) with \( |(\support \nu))^+| = 4(m-1)-2 \) such that~\ref{item: 1st option} holds, and exactly~\( 4(m-1)\ell-2\ell_c\) distinct \( \nu \in \Lambda\) with \( |(\support \nu))^+| = 4(m-1)-2 \) such that~\ref{item: 2nd option} holds, the conclusion immediately follows.
\end{proof}

\begin{lemma}\label{lemma: general Cbeta}
    Let $\beta > \beta_0(m)$ and \( p \in C_2(B_N).\) Then
    \begin{equation*}
        \sum_{\mathcal{V} \in \Xi_{1^+,1^+,p}}
        \bigl| \Psi_\beta(\mathcal{V}) \bigr|
        \leq
        \sum_{k=2(m-1)}^{\infty}
        M^{2k-1}
        e^{-2(2\beta-1/3)k}.
    \end{equation*}
\end{lemma}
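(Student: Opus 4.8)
The plan is to reduce the sum over clusters containing $p$ to a sum over single vortices containing $p$, and then to combine Lemma~\ref{lemma: the assumption on a} with the counting estimate Lemma~\ref{lemma: the number of vortices}.

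First I would observe that every $\mathcal{V} \in \Xi_{1^+,1^+,p}$ contains at least one vortex $\nu \in \Lambda$ with $p \in \support \nu$, since $\support \mathcal{V} = \bigcup_{\nu \in \mathcal{V}} \support \nu$. Estimating $|\Psi_\beta(\mathcal{V})|$ by itself once for each such $\nu$ (this only overcounts, which is harmless for an upper bound) and then applying Lemma~\ref{lemma: the assumption on a} to the inner sum gives
\[
\sum_{\mathcal{V} \in \Xi_{1^+,1^+,p}} \bigl|\Psi_\beta(\mathcal{V})\bigr|
\;\le\;
\sum_{\nu \in \Lambda \,:\, p \in \support \nu} \;
\sum_{\mathcal{V} \in \Xi \,:\, \nu \in \mathcal{V}} \bigl|\Psi_\beta(\mathcal{V})\bigr|
\;\le\;
\sum_{\nu \in \Lambda \,:\, p \in \support \nu} e^{|\support \nu|/3}\, \phi_\beta(\nu).
\]

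Next I would organize the last sum by $k \coloneqq |(\support \nu)^+|$. Since $\support \nu$ is closed under orientation reversal, $|\support \nu| = 2k$, and since $G = \mathbb{Z}_2$ we have $\phi_\beta(\nu) = e^{-4\beta k}$, so each term equals $e^{2k/3} e^{-4\beta k} = e^{-2(2\beta - 1/3)k}$. The number of $\nu \in \Lambda$ with $p \in \support \nu$ and $|(\support \nu)^+| = k$ is exactly $|\Xi_{1,k,p'}|$, where $p'$ is the positively oriented representative of $p$ (whether or not $p$ is positively oriented is immaterial, since $\nu(p) \neq 0 \iff \nu(-p) \neq 0$); by Lemma~\ref{lemma: the number of vortices} this is at most $M^{2k-1}$. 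Finally, any nonzero $\nu \in \Lambda$ satisfies $|(\support \nu)^+| \ge 2(m-1)$ --- this is contained in Lemma~\ref{lemma: small vortices} and is already invoked in the proof of Lemma~\ref{lemma: the assumption on a} --- so $k$ runs over $k \ge 2(m-1)$, and the displayed bound becomes $\sum_{k \ge 2(m-1)} M^{2k-1} e^{-2(2\beta - 1/3)k}$, which is the claim.

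This is essentially a bookkeeping argument and I do not expect a genuine difficulty; the points needing care are (i) that passing from clusters to single vortices only overcounts, hence is valid for an upper bound, (ii) the factor $2$ relating $|\support \nu|$ to $|(\support \nu)^+|$, which is what produces the exponent $-2(2\beta - 1/3)k$, and (iii) justifying that the summation starts at $k = 2(m-1)$. For context one may also note that the resulting series converges precisely when $\beta > \tfrac12 \log M + \tfrac16 = \beta_0(m)$, in agreement with the hypothesis.
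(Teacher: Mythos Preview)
Your proof is correct and follows essentially the same approach as the paper: reduce to single vortices containing $p$ via overcounting, apply Lemma~\ref{lemma: the assumption on a}, then organize by $k=|(\support\nu)^+|$ and invoke Lemma~\ref{lemma: the number of vortices} together with Lemma~\ref{lemma: small vortices}. Your write-up is in fact slightly more careful about the bookkeeping (the factor $2$ between $|\support\nu|$ and $|(\support\nu)^+|$, and the orientation of $p$) than the paper's version.
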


\begin{proof}
    By Lemma~\ref{lemma: the assumption on a}, for any \( \nu \in \Lambda,\) we have 
    \begin{equation*}
        \sum_{\mathcal{V} \in \Xi \colon \nu \in \mathcal{V}} \Psi_\beta(\mathcal{V}) \leq e^{|\support \nu|/3} \phi_\beta(\nu),
    \end{equation*}
    and hence
    \begin{equation}\label{eq: b1}
        \begin{split}
            &\sum_{\mathcal{V} \in \Xi_{1^+,1^+,p}}
            \bigl| \Psi_\beta(\mathcal{V}) \bigr|
            \leq 
            \sum_{\{\nu\} \in \Xi_{1^+,1^+,p}}
            \sum_{\mathcal{V} \in \Xi \colon \nu \in \mathcal{V}} \bigl| \Psi_\beta(\mathcal{V}) \bigr|
            \leq 
            \sum_{\nu \in \Xi_{1^+,1^+,p}}
            e^{|\support \nu|/3} \phi_\beta(\nu) 
            \\&\qquad=
            \sum_{\nu \in \Xi_{1^+,1^+,p}}
            e^{-(2\beta-1/3)|\support \nu|}. 
        \end{split}
    \end{equation}
    Using Lemma~\ref{lemma: small 
    vortices}, it follows that 
    \begin{equation}\label{eq: b2}
        \begin{split}
            &
            \sum_{\nu \in \Xi_{1,1^+,p}}
            e^{-(4\beta-1/3)|\support \nu|}
            =
            \sum_{k=2(m-1)}^{\infty}
            \sum_{\nu \in \Xi_{1,k,p}}
            e^{-(2\beta-1/3)|\support \nu|}
            \\&\qquad=
            \sum_{k=2(m-1)}^{\infty}
            |\Xi_{1,k,p}|
            e^{-(2\beta-1/3) 2k}
            .
        \end{split}
    \end{equation}
    Combining~\eqref{eq: b1} and~\eqref{eq: b2} and using Lemma~\ref{lemma: the number of vortices}, the desired conclusion immediately follows.
\end{proof}

\begin{lemma}\label{lemma: new upper bound}
    Let \( \beta > \beta_0(m) ,\) \( k \geq 1\) and \( p \in C_2(B_N).\) Further, let \( \beta^* \in \bigl(\beta_0(m) , \beta\bigr). \) Then
    \begin{equation*}
        \begin{split}
            & \sum_{\mathcal{V} \in \Xi_{1^+,k^+,p}} \bigl| \Psi_\beta(\mathcal{V}) \bigr|   
            \leq  
            C_{\beta^*} e^{-4(\beta-\beta^*) k},
        \end{split}
    \end{equation*} 
    where \( C_{\beta^*}\) is defined by
    \begin{equation}\label{eq: cbeta*}
    C_{\beta^*} \coloneqq \sup_{N \geq 1} \sum_{\mathcal{V} \in \Xi_{1^+,1^+,p}} \bigl| \Psi_{\beta^*}(\mathcal{V}) \bigr| < \infty.
    \end{equation}
\end{lemma}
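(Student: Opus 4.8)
The statement is essentially a domination argument: all the work has already been done in Lemma~\ref{lemma: general Cbeta}, and here one only needs to convert the convergence of the cluster sum at the auxiliary temperature $\beta^*$ into explicit exponential decay in $k$ at the lower temperature $\beta$. The plan is as follows. Recall that $\Psi_\beta(\mathcal{V}) = U(\mathcal{V}) \phi_\beta(\mathcal{V})$ and $\phi_\beta(\mathcal{V}) = e^{-4\beta|\mathcal{V}|}$, so $|\Psi_\beta(\mathcal{V})| = |U(\mathcal{V})|\, e^{-4\beta|\mathcal{V}|}$. The key algebraic observation is that, since $\beta > \beta^*$,
\[
e^{-4\beta|\mathcal{V}|} = e^{-4\beta^*|\mathcal{V}|}\, e^{-4(\beta-\beta^*)|\mathcal{V}|},
\]
and on $\Xi_{1^+,k^+,p}$ one has $|\mathcal{V}| \ge k$, hence $e^{-4(\beta-\beta^*)|\mathcal{V}|} \le e^{-4(\beta-\beta^*)k}$. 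Pulling this factor out of the sum gives
\[
\sum_{\mathcal{V} \in \Xi_{1^+,k^+,p}} \bigl|\Psi_\beta(\mathcal{V})\bigr|
\le e^{-4(\beta-\beta^*)k} \sum_{\mathcal{V} \in \Xi_{1^+,k^+,p}} |U(\mathcal{V})|\, e^{-4\beta^*|\mathcal{V}|}
= e^{-4(\beta-\beta^*)k} \sum_{\mathcal{V} \in \Xi_{1^+,k^+,p}} \bigl|\Psi_{\beta^*}(\mathcal{V})\bigr|.
\]

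Next I would use the containment $\Xi_{1^+,k^+,p} \subseteq \Xi_{1^+,1^+,p}$ (valid since $k \ge 1$ forces $|\mathcal{V}| \ge 1$) to bound the remaining sum by $\sum_{\mathcal{V} \in \Xi_{1^+,1^+,p}} |\Psi_{\beta^*}(\mathcal{V})|$, which is exactly the quantity $C_{\beta^*}$ appearing in~\eqref{eq: cbeta*}, uniformly over $N$. This yields the claimed bound $\sum_{\mathcal{V} \in \Xi_{1^+,k^+,p}} |\Psi_\beta(\mathcal{V})| \le C_{\beta^*} e^{-4(\beta-\beta^*)k}$.

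Finally, one must check that $C_{\beta^*} < \infty$, i.e., that the supremum over $N$ in~\eqref{eq: cbeta*} is finite. This is precisely the content of Lemma~\ref{lemma: general Cbeta} applied with $\beta^*$ in place of $\beta$: it gives, uniformly in $N$,
\[
\sum_{\mathcal{V} \in \Xi_{1^+,1^+,p}} \bigl|\Psi_{\beta^*}(\mathcal{V})\bigr| \le \sum_{k=2(m-1)}^\infty M^{2k-1} e^{-2(2\beta^*-1/3)k},
\]
and the right-hand side is a convergent geometric series because $M^2 e^{-2(2\beta^*-1/3)} = M^2 e^{-4\beta^* + 2/3} < 1$ whenever $\beta^* > \beta_0(m) = \tfrac12\log M + \tfrac16$ (equality holds at $\beta^* = \beta_0(m)$). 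This also shows $C_{\beta^*}$ depends only on $m$ and $\beta^*$, not on $N$ or $p$. There is no real obstacle in this argument; the only point requiring a little care is to invoke the correct uniform-in-$N$ bound from Lemma~\ref{lemma: general Cbeta} so that $C_{\beta^*}$ is genuinely finite, and to note that the estimate is independent of the choice of plaquette $p$.
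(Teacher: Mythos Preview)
Your proposal is correct and is essentially the same argument as the paper's: both use the factorization $|\Psi_\beta(\mathcal{V})| = e^{-4(\beta-\beta^*)|\mathcal{V}|}\,|\Psi_{\beta^*}(\mathcal{V})|$, bound $e^{-4(\beta-\beta^*)|\mathcal{V}|}\le e^{-4(\beta-\beta^*)k}$ on $\Xi_{1^+,k^+,p}$, enlarge the sum to $\Xi_{1^+,1^+,p}$, and invoke Lemma~\ref{lemma: general Cbeta} to see that $C_{\beta^*}<\infty$ uniformly in $N$.
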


\begin{proof}
    By Lemma~\ref{lemma: general Cbeta}, we have \( C_{\beta^*}<\infty,\) and hence \( C_{\beta^*} \) is well defined.
    
    For any \( \mathcal{V}\in \Xi,\) we have \( \phi_\beta(\mathcal{V}) = e^{-2\beta| \mathcal{V}|}\) and \( \Psi_\beta(\mathcal{V}) = U(\mathcal{V}) \psi_\beta(\mathcal{V}),\) where \( U(\mathcal{V})\) does not depend on \( \beta, \) and hence \[
    \Psi_\beta(\mathcal{V}) = e^{-2(\beta-\beta^*)| \mathcal{V}|} \Psi_{\beta^*}(\mathcal{V}). \]
    Using this observation, we obtain
    \begin{equation*}
        \begin{split}
            & \sum_{\mathcal{V} \in \Xi_{1^+,k^+,p}} \bigl| \Psi_\beta(\mathcal{V}) \bigr|   
            \leq  
            e^{-4(\beta-\beta^*) k}\sum_{\mathcal{V} \in \Xi_{1^+,k^+,p}} \bigl| \Psi_{\beta^*}(\mathcal{V}) \bigr|     
            \leq  
            e^{-4(\beta-\beta^*) k}\sum_{\mathcal{V} \in \Xi_{1^+,1^+,p}} \bigl| \Psi_{\beta^*}(\mathcal{V}) \bigr|.
        \end{split}
    \end{equation*} 
    This concludes the proof.
\end{proof}

\begin{lemma}\label{lemma: new q bound}
    Let \( \beta > \beta_0(m),\) and let \( k \geq 1.\) Further, let \( \beta^* \in (\beta_0(m) , \beta). \) Then
    \begin{equation*}
        \begin{split}
            & \sum_{\mathcal{V} \in \Xi_{1^+,k^+,q}} \bigl| \Psi_\beta(\mathcal{V}) \bigr|   
            \leq  
            C_m  C_{\beta^*} \ell \sum_{j=0}^\infty 
            \max(1,j)^{m-1}  e^{-4(\beta-\beta^*) \max(k,\hat C_m(j+1))}.
        \end{split}
    \end{equation*} 
    where \( C_{\beta^*}\) is defined in~\eqref{eq: cbeta*}.
\end{lemma}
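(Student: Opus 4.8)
The plan is to prove Lemma~\ref{lemma: new q bound} by an \emph{anchoring} argument: to each cluster \( \mathcal{V} \in \Xi_{1^+,k^+,q} \) I associate a vortex inside it that ``sees'' \( q \), then an edge of \( \gamma \) and a plaquette in that vortex realizing the distance between the vortex and \( \gamma \), and finally I sum over all possible anchors, using Lemma~\ref{lemma: new upper bound} to control the clusters compatible with a fixed anchor plaquette and Lemma~\ref{lemma: plaquettes on a given distance} to count the anchor plaquettes.

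First I would fix \( \mathcal{V} \in \Xi_{1^+,k^+,q} \). Since \( G = \mathbb{Z}_2 \) and \( \mathcal{V}(q) = \sum_{\nu} n_{\mathcal{V}}(\nu)\,\nu(q) \ne 0 \) in \( \mathbb{Z}_2 \), there is at least one vortex \( \nu \in \mathcal{V} \) with \( \nu(q) = 1 \); fix one such \( \nu = \nu_{\mathcal{V}} \) (via any deterministic rule breaking ties). Set \( j = j(\mathcal{V}) \coloneqq \dist(\support \nu_{\mathcal{V}}, \gamma) \); since \( \support \nu_{\mathcal{V}} \) and \( \support \gamma \) are finite this distance is attained, so I may fix \( e = e_{\mathcal{V}} \in \support\gamma \) and \( p = p_{\mathcal{V}} \in \support \nu_{\mathcal{V}} \) with \( \dist(p_{\mathcal{V}}, e_{\mathcal{V}}) = j \). (By Lemma~\ref{lemma: polymer and box}, \( \support \nu_{\mathcal{V}} \) must lie close to \( \gamma \), so for the large \( N \) of interest these vortices stay away from \( \partial B_N \).) The anchor then constrains the size of \( \mathcal{V} \): on the one hand \( |\mathcal{V}| \ge k \) by hypothesis, and on the other \( |\mathcal{V}| \ge |(\support \nu_{\mathcal{V}})^+| \), because \( \nu_{\mathcal{V}} \) occurs in \( \mathcal{V} \) with multiplicity \( \ge 1 \). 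By Lemma~\ref{lemma: minimal size if at distance} when \( j \ge 1 \), and trivially when \( j = 0 \) (recalling that every vortex relevant here has \( |(\support \nu)^+| \ge 2(m-1) \) and that \( \hat C_m \) may be taken \( \le 2(m-1) \)), we obtain \( |(\support \nu_{\mathcal{V}})^+| \ge \hat C_m(j+1) \). Hence \( |\mathcal{V}| \ge \max\bigl(k,\hat C_m(j+1)\bigr) \).

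Next I would assemble the union bound. Every \( \mathcal{V} \in \Xi_{1^+,k^+,q} \) satisfies \( e_{\mathcal{V}} \in \support\gamma \), \( \dist(p_{\mathcal{V}},e_{\mathcal{V}}) = j(\mathcal{V}) \), \( p_{\mathcal{V}} \in \support \mathcal{V} \), and \( |\mathcal{V}| \ge \max(k,\hat C_m(j(\mathcal{V})+1)) \); since the summands are non-negative, enlarging the index set only increases the sum, giving
\[
\sum_{\mathcal{V} \in \Xi_{1^+,k^+,q}} \bigl|\Psi_\beta(\mathcal{V})\bigr|
\;\le\;
\sum_{e \in \support\gamma}\;\sum_{j=0}^\infty\;\sum_{\substack{p \in C_2(B_N)^+\\ \dist(p,e) = j}}\;\sum_{\mathcal{V} \in \Xi_{1^+,\,\lceil\max(k,\hat C_m(j+1))\rceil^+,\,p}} \bigl|\Psi_\beta(\mathcal{V})\bigr|.
\]
To the innermost sum I would apply Lemma~\ref{lemma: new upper bound}, bounding it by \( C_{\beta^*}\,e^{-4(\beta-\beta^*)\max(k,\hat C_m(j+1))} \); to the sum over plaquettes \( p \) at fixed distance \( j \) from \( e \) I would apply Lemma~\ref{lemma: plaquettes on a given distance}, producing the factor \( C_m\max(1,j)^{m-1} \); and \( \sum_{e\in\support\gamma}1 = \ell \). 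Combining these yields
\[
\sum_{\mathcal{V} \in \Xi_{1^+,k^+,q}} \bigl|\Psi_\beta(\mathcal{V})\bigr|
\;\le\;
C_m\,C_{\beta^*}\,\ell\sum_{j=0}^\infty \max(1,j)^{m-1}\,e^{-4(\beta-\beta^*)\max(k,\hat C_m(j+1))},
\]
which is the claimed inequality.

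The point requiring care is the anchoring/size bookkeeping rather than any hard estimate: one must check that the map \( \mathcal{V}\mapsto(e_{\mathcal{V}},j(\mathcal{V}),p_{\mathcal{V}}) \) is well defined and, more importantly, that the size bound it forces, \( |\mathcal{V}|\ge\max(k,\hat C_m(j+1)) \), is exactly the threshold that Lemma~\ref{lemma: new upper bound} consumes. Everything else is a routine combination of Lemmas~\ref{lemma: minimal size if at distance},~\ref{lemma: plaquettes on a given distance} and~\ref{lemma: new upper bound}; a minor convention issue (whether to index plaquettes by \( C_2(B_N) \) or \( C_2(B_N)^+ \)) I would resolve by staying on the positively oriented side throughout, in line with Lemma~\ref{lemma: plaquettes on a given distance} and the definition of \( |(\support\nu)^+| \).
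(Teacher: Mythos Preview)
Your proof is correct and follows essentially the same route as the paper: decompose by the distance \(j\) to \(\gamma\), use Lemma~\ref{lemma: minimal size if at distance} to force \(|\mathcal{V}|\ge \max(k,\hat C_m(j+1))\), then anchor at a plaquette and apply Lemmas~\ref{lemma: plaquettes on a given distance} and~\ref{lemma: new upper bound}. The only cosmetic difference is that you index by \(\dist(\support\nu_{\mathcal{V}},\gamma)\) for a chosen vortex with \(\nu(q)=1\), whereas the paper indexes by \(\dist(\support\mathcal{V},\gamma)\); since \(\support\nu_{\mathcal{V}}\subseteq\support\mathcal{V}\), both choices yield the same union bound.
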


\begin{proof}
    Write
    \begin{equation}\label{eq: m  contribution 2a}
        \sum_{\mathcal{V} \in \Xi_{1,k^+,q}}   |\Psi_\beta(\mathcal{V})|  
        =
        \sum_{j = 0}^\infty
        \sum_{\substack{\mathcal{V} \in \Xi_{1^+,k^+,q} \\ \dist(\support \mathcal{V},\gamma)=j}}  |\Psi_\beta(\mathcal{V})|.
    \end{equation}
    Using Lemma~\ref{lemma: minimal size if at distance}, we can write
    \begin{equation}\label{eq: m  contribution 2b}
        \begin{split}
            &
            \sum_{j = 0}^\infty
            \sum_{\substack{\mathcal{V} \in \Xi_{1^+,k^+,q} \\ \dist(\support \mathcal{V},\gamma)=j}}  |\Psi_\beta(\mathcal{V})  |
            \leq
            \sum_{j = 0}^\infty
            \; \sum_{\substack{p \in C_2(B_N)^+ \mathrlap{\colon}\\ \dist(p,\gamma)=j}}
            \;\; \sum_{\mathcal{V} \in \Xi_{1^+,\max(k,\hat C_m(j+1))^+,p}}  |\Psi_\beta(\mathcal{V}) |
            .
        \end{split}
    \end{equation}
    By using Lemma~\ref{lemma: plaquettes on a given distance} and Lemma~\ref{lemma: new upper bound}, the right hand side of the previous equation can be bounded from above by
    \begin{align*}
        &\sum_{j=0}^\infty
        C_m C_{\beta^*}\ell \max(1,j)^{m-1} 
        e^{-4(\beta-\beta^*) \max(k,\hat C_m(j+1))}.
    \end{align*}
    Combining this observation with~\eqref{eq: m  contribution 2a}~and~\eqref{eq: m  contribution 2b}, we obtain the desired conclusion.
\end{proof}

\begin{remark}\label{remark: complex beta new}
    Lemmas~\ref{lemma: new upper bound}~and~\ref{lemma: new q bound} remain valid for complex $\beta$, assuming $\real  \beta > \beta_0(m)$. Indeed, the proofs work verbatim replacing $\beta$ by $\real  \beta$.
\end{remark}
    
We are now ready to give the proof of Proposition~\ref{proposition: m = 1 contribution new}.
\begin{proof}[Proof of Proposition~\ref{proposition: m = 1 contribution new}]
    Note first that 
    \begin{equation}\label{eq: m = 1 contribution 1 new}
        \sum_{\mathcal{V} \in \Xi_1 } \bigl( \Psi_\beta(\mathcal{V})-\Psi_{\beta, q}(\mathcal{V}) \bigr)
        =
        \sum_{\{ \nu \} \in \Xi_1} U\bigl(\{ \nu \}\bigr) \Psi_\beta(\nu) \pigl( 1 - \rho\bigl( \nu(q)\bigr) \pigr)
        =
        2\sum_{\{ \nu \} \in \Xi_{1,1^+,q} }\Psi_\beta(\nu).
    \end{equation} 
    Using Lemma~\ref{lemma: small vortices}, we get
    \begin{equation}\label{eq: m = 1 contribution 2a new}
        \begin{split}
        &\sum_{ \{ \nu\} \in \Xi_{1,1^+,q} }\Psi_\beta(\nu)
        = 
        \sum_{\{ \nu \} \in \Xi_{1,2(m-1),q}}\!\!\!\!
        \Psi_\beta(\nu)
        +\!\!\!\!
        \sum_{\{ \nu \} \in \Xi_{1,4(m-1)-2,q}} \Psi_\beta(\nu)
        +\!\!\!\!
        \sum_{\{ \nu \} \in \Xi_{1,4(m-1)^+,q}}\!\!\!\! 
        \Psi_\beta(\nu).
        \end{split}
    \end{equation} 
    Now note that if \( \mathcal{V} \in \Xi_1, \) then \( \Psi(\mathcal{V}) = \phi_\beta(\mathcal{V}) > 0. \) Using Lemma~\ref{lemma: new q bound}, applied with \( k = 4(m-1),\) we thus obtain
    \begin{equation}\label{eq: ineq eq in lemma new}
        \begin{split}
            & 0 \leq \sum_{\mathcal{V} \in \Xi_{1,k^+,q}} \Psi_\beta(\mathcal{V}) 
            =
            \sum_{\mathcal{V} \in \Xi_{1,4(m-1)^+,q}} \bigl| \Psi_\beta(\mathcal{V})  \bigr| 
            \leq 
            D_1^* \ell e^{-16(m-1)\beta },
        \end{split}
    \end{equation} 
    where
    \begin{equation}\label{eq: D1 new}
        D_1^* \coloneqq C_m  C_{\beta^*} e^{16(m-1)\beta^*}
        \sum_{j=0}^\infty 
            \max(1,j)^{m-1}  e^{-4(\beta-\beta^*) \max(0,\hat C_m(j+1)-4(m-1))}.
    \end{equation}
    We see that $D_1^* = O_\beta(1)$.
    At the same time, by combining Lemma~\ref{lemma: m1min1 new} and Lemma~\ref{lemma: m1min2 new}, we have
    \begin{equation}\label{eq: me last eq2 new}
        \begin{split}
            &%
            \sum_{\{ \nu \} \in \Xi_{1,2(m-1),q}}
            \Psi_\beta(\nu)
            +
            \sum_{\{ \nu \} \in \Xi_{1,4(m-1)-2,q}}
            \Psi_\beta(\nu)
            \\&\qquad= 
            \ell e^{-8(m-1)\beta}
            +
            \bigl(6(m-1)\ell-2\ell_c-2\ell_{b}\bigr) e^{-4(4(m-1)-2)\beta}=\ell H_{m,\beta}(\gamma).
        \end{split}
    \end{equation} 
    Combining~\eqref{eq: m = 1 contribution 1 new},~\eqref{eq: m = 1 contribution 2a new},~\eqref{eq: ineq eq in lemma new},~and~\eqref{eq: me last eq2 new}, we obtain the desired conclusion.
\end{proof}

\begin{proof}[Proof of Proposition~\ref{proposition: m >=2 contribution new}]
     Note first that given \( \mathcal{V} \in \Xi,\) we have \( \rho\bigl(\mathcal{V}(q)\bigr) \neq 1 \) if and only if  \( \rho\bigl(\mathcal{V}(q)\bigr) = -1 \) and hence \( \mathcal{V}(q) \neq 0. \)
     This implies in particular that
    \begin{equation*}
        \begin{split}
            &
            \sum_{\mathcal{V} \in \Xi_{2^+}} \Psi_\beta(\mathcal{V}) - \Psi_{\beta,q}(\mathcal{V}) 
            =
            \sum_{\mathcal{V} \in \Xi_{2^+}} \Psi_\beta(\mathcal{V})   \pigl( 1-\rho \bigl( \mathcal{V}(q) \bigr)\pigr)
            =
            2 
            \sum_{\mathcal{V}\in \Xi_{2^+,1^+,q}} \Psi_\beta(\mathcal{V}).
        \end{split}
    \end{equation*}
    Consequently, using Lemma~\ref{lemma: small vortices}, we find that
    \begin{equation}\label{eq: mdist abs ineq}
        \begin{split}
            &
            \sum_{\mathcal{V} \in \Xi_{2^+} } \Bigl|  \Psi_\beta(\mathcal{V}) - \Psi_{\beta, q}(\mathcal{V}) \Bigr|
            \leq
            2 \sum_{\mathcal{V}\in \Xi_{2^+,1^+,q}}  \bigl| \Psi_\beta(\mathcal{V}) \bigr| 
            \leq
            2 \sum_{\mathcal{V}\in \Xi_{1^+,4(m-1)^+,q}}  \bigl| \Psi_\beta(\mathcal{V}) \bigr| 
        \end{split}
    \end{equation}
    Using Lemma~\ref{lemma: new q bound}, applied with \( k = 4(m-1),\) we thus obtain
    \begin{equation}\label{eq: mdist abs ineq new}
        \begin{split}
            & \sum_{\mathcal{V} \in \Xi_{2^+} } \Bigl|  \Psi_\beta(\mathcal{V}) - \Psi_{\beta, q}(\mathcal{V}) \Bigr|
            \leq 2D_1^* \ell e^{-16(m-1)\beta },
        \end{split}
    \end{equation} 
    where \( D_1^*\) is given by~\eqref{eq: D1 new}. This concludes the proof.
\end{proof}

\begin{proof}[Proof of Proposition~\ref{proposition: uniformity in m' new}]
    By Lemma~\ref{lemma: small vortices}, without loss of generality, we can assume that \( k \geq 2(m-1).\)
    
     Given \( \mathcal{V} \in \Xi,\) if \( \rho\bigl(\mathcal{V}(q)\bigr) \neq 1 \) then \( \rho\bigl(\mathcal{V}(q)\bigr) = -1 \) and \(\mathcal{V}(q) \neq 0. \)
     This implies in particular that
    \begin{equation}\label{eq: mdist abs ineq 2}
        \begin{split}
            &
            \sum_{\mathcal{V} \in \Xi_{1^+,{k}^+}} \bigl| \Psi(\mathcal{V}) - \Psi_q(\mathcal{V})  \bigr| =
            \sum_{\mathcal{V}\in \Xi_{1^+,{k}^+}} \bigl| \Psi_\beta(\mathcal{V})  \bigr|  \pigl| 1- \rho\bigl(\mathcal{V}(q)\bigr) \pigr|
            =
            2 
            \sum_{\mathcal{V}\in \Xi_{1^+,{k}^+,q}} \bigl| \Psi_\beta(\mathcal{V}) \bigr| .
        \end{split}
    \end{equation} 
    Applying Lemma~\ref{lemma: new q bound}, the desired conclusion immediately follows.
\end{proof}

\section{Proof of Theorem~\ref{theorem: logZ}}

The main purpose of this section is provide a proof of Theorem~\ref{theorem: logZ}. To do this, we first state and prove two lemmas.  These are then combined with results from previous sections to yield a proof of Theorem~\ref{theorem: logZ}.

To simplify the notation in this section, we fix some \( p_0 \in \bigcap_{N \geq 1} C_2(B_N)^+.\) For \( \real \beta > \beta_0(m) \) and \( N \geq 1,\) we define
\begin{equation*}
    F_N(\beta) \coloneqq \sum_{\mathcal{V} \in \Xi_{1^+,1^+,p_0}(B_N)} \frac{\Psi_\beta(\mathcal{V})}{|\mathcal{V}|}
\end{equation*}

 
\begin{lemma}\label{eq: proposition: free energy}
    Let \( \real  \beta > \beta_0(m).\)  Then 
    \begin{equation*}
        \begin{split}
            \lim_{N \to \infty}\left|\frac{\log Z_{\beta,N}}{|C_2(B_N)^+|}  - F_N(\beta) \right| =0.
        \end{split} 
    \end{equation*}
    locally uniformly.
\end{lemma}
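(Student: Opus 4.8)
The plan is to start from the cluster expansion $\log Z_{\beta,N} = \sum_{\mathcal{V} \in \Xi} \Psi_\beta(\mathcal{V})$ proved in Lemma~\ref{lem:partition-functions} and reorganize the sum by distributing each cluster's contribution over the plaquettes in its support. Concretely, for any $\mathcal{V} \in \Xi$ one has $|\mathcal{V}| = \sum_{p \in C_2(B_N)^+} |\mathcal{V}| \, \mathbb{1}(p \in \support \mathcal{V}) \big/ \#\{\text{distinct plaquettes in }\support\mathcal{V}\}$ — more cleanly, write $1 = \sum_{p \in (\support \mathcal{V})^+} 1/|(\support\mathcal{V})^+|$, but the cleanest device is: since $\Psi_\beta(\mathcal{V})$ is translation covariant, partition $\Xi(B_N)$ according to which plaquettes lie in the support. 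I would instead use the standard trick of writing
\begin{equation*}
\sum_{\mathcal{V} \in \Xi(B_N)} \Psi_\beta(\mathcal{V}) = \sum_{p \in C_2(B_N)^+} \sum_{\mathcal{V} \in \Xi: p \in \support\mathcal{V}} \frac{|(\support\mathcal{V})^+|^{-1}\cdot |(\support\mathcal{V})^+|}{\cdots},
\end{equation*}
but the genuinely convenient weight is $1/|\mathcal{V}|$ paired with the number of ordered plaquette-slots, so that $\sum_{\mathcal V}\Psi_\beta(\mathcal V) = \sum_p \sum_{\mathcal V \ni p} \Psi_\beta(\mathcal V)\,|(\support\mathcal V)^+|\,/\,|(\support\mathcal V)^+|$. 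I will settle on the identity $\log Z_{\beta,N} = \sum_{p\in C_2(B_N)^+}\sum_{\mathcal V\in\Xi_{1^+,1^+,p}(B_N)}\Psi_\beta(\mathcal V)/|(\support\mathcal V)^+|$ and note $F_N(\beta)$ uses $|\mathcal V|$ in the denominator; since $|\mathcal V| = |(\support\mathcal V)^+|$ only fails when a cluster repeats a vortex, I must check the weight convention matches (most likely $|\mathcal V|$ is intended precisely so that $\sum_{p}|\{\text{slots}\}|$-counting works, i.e. each cluster is counted $|\mathcal V|$ times, once per plaquette-with-multiplicity in its support-with-multiplicity). Granting the correct bookkeeping, one gets exactly $\log Z_{\beta,N} = \sum_{p\in C_2(B_N)^+} \sum_{\mathcal V \in \Xi_{1^+,1^+,p}(B_N)} \Psi_\beta(\mathcal V)/|\mathcal V|$.

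Next I would exploit translation invariance in the bulk: for any $p$ whose distance to $\partial B_N$ exceeds the diameter of a cluster, the inner sum $\sum_{\mathcal V \in \Xi_{1^+,1^+,p}(B_N)} \Psi_\beta(\mathcal V)/|\mathcal V|$ equals $F_N(\beta)$ up to clusters that "see" the boundary. So I split $C_2(B_N)^+$ into a bulk set $B^{\mathrm{bulk}}$ of plaquettes at distance $\ge \log N$ (say) from $\partial B_N$ and the remaining "near-boundary" plaquettes, of which there are $o(|C_2(B_N)^+|)$. On the bulk set each inner sum differs from $F_N(\beta)$ only through clusters of support-diameter $> \log N$, and by the exponential decay in Lemma~\ref{lemma: general Cbeta}/Lemma~\ref{lemma: new upper bound} (with a $\beta^\ast$ squeezed between $\beta_0$ and $\beta$, as in Lemma~\ref{lemma: new q bound}), the total such contribution per plaquette is bounded by $\sum_{k \ge c\log N} M^{2k-1} e^{-2(2\beta - 1/3)k}$, which is summable and tends to $0$. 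For the near-boundary plaquettes I bound the inner sum in absolute value by the uniform constant $C_{\beta^\ast}$ of \eqref{eq: cbeta*} (via Lemma~\ref{lemma: general Cbeta}), and since their number is $O(N^{m-1}\log N) = o(N^m) = o(|C_2(B_N)^+|)$, their total contribution divided by $|C_2(B_N)^+|$ also vanishes. Putting these together,
\begin{equation*}
\left| \frac{\log Z_{\beta,N}}{|C_2(B_N)^+|} - F_N(\beta) \right| \le \frac{|\text{near-bdry}|}{|C_2(B_N)^+|}\bigl(C_{\beta^\ast} + |F_N(\beta)|\bigr) + \sup_p \sum_{k \ge c\log N} M^{2k-1} e^{-2(2\beta-1/3)k} \xrightarrow[N\to\infty]{} 0.
\end{equation*}

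For the local uniformity claim, I note all the bounds above are in terms of $\real\beta$ only (as in Remark~\ref{remark: complex beta new}): the dominating series $\sum_k M^{2k-1} e^{-2(2\real\beta - 1/3)k}$ and the constant $C_{\beta^\ast}$ can be chosen uniform over a compact subset of $\{\real\beta > \beta_0(m)\}$ by taking $\beta^\ast$ below the infimum of $\real\beta$ on that compact set, so the convergence is uniform there.

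The main obstacle I anticipate is not any single estimate — those are all immediate from the lemmas already proved — but getting the combinatorial identity exactly right: precisely how to "share" $\Psi_\beta(\mathcal V)$ among plaquettes so that the denominator is $|\mathcal V|$ rather than $|(\support\mathcal V)^+|$, and being careful that $\Xi_{1^+,1^+,p}$ counts clusters containing $p$ in support (without multiplicity weighting) while $|\mathcal V|$ carries the vortex-size-with-multiplicity. If the intended identity is genuinely $\log Z_{\beta,N} = \sum_{p} \sum_{\mathcal V\in\Xi_{1^+,1^+,p}} \Psi_\beta(\mathcal V)/|\mathcal V|$ then one needs that each $\mathcal V$ is "hit" by exactly $|\mathcal V|$ pairs $(p,\mathcal V)$ with $p\in\support\mathcal V$, which holds iff the plaquette-sum $\sum_p \mathbb 1(p\in\support\mathcal V)$ equals $|\mathcal V| = \sum_\nu n_{\mathcal V}(\nu)|(\support\nu)^+|$ — this is true only if distinct vortices in $\mathcal V$ have disjoint supports, which is false in general. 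So most likely the correct reading is that $F_N$ is defined with $|\mathcal V|$ deliberately to produce a telescoping that still averages correctly in the limit, and the honest statement to prove is that the discrepancy between $\sum_p |(\support\mathcal V)^+|^{-1}$-weighting and $|\mathcal V|^{-1}$-weighting is itself controlled by the same exponentially-decaying tail and hence negligible after dividing by $|C_2(B_N)^+|$. I would flag this point explicitly and absorb it into the near-boundary/tail error terms.
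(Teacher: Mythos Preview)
Your overall strategy coincides with the paper's: the paper also asserts the rewriting
\[
\log Z_{\beta,N}=\sum_{p\in C_2(B_N)^+}\sum_{\mathcal{V}\in\Xi_{1^+,2(m-1)^+,p}}\frac{\Psi_\beta(\mathcal{V})}{|\mathcal{V}|},
\]
then fixes a truncation level $k$, uses translation invariance to identify the inner sum at any bulk plaquette with $F_N(\beta)$ up to clusters of size $\ge k$, bounds the tail by Lemma~\ref{lemma: new upper bound} (giving $\varepsilon(\beta,m,k)=C_{\beta^*}k^{-1}e^{-4(\beta-\beta^*)k}$), and controls the near-boundary plaquettes by the count $O(kN^{m-1})$. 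So the decomposition, the key lemmas invoked, and the local-uniformity argument are all the same as what you outline.

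The combinatorial point you flag is real, and the paper does not address it: the identity above requires that for every cluster $\mathcal{V}$ one has $|(\support\mathcal{V})^+|=|\mathcal{V}|$, i.e.\ that the vortices in $\mathcal{V}$ have pairwise disjoint plaquette supports and no repetitions. This fails already for $\mathcal{V}=\{\nu,\nu\}$, where $|(\support\mathcal{V})^+|=|(\support\nu)^+|$ but $|\mathcal{V}|=2|(\support\nu)^+|$, and more generally whenever vortices overlap. The paper simply writes the identity as an equality and proceeds. Your instinct to ``absorb it into the near-boundary/tail error terms'' does not quite work as stated: the discrepancy
\[
\sum_{\mathcal{V}\in\Xi}\Psi_\beta(\mathcal{V})\Bigl(1-\frac{|(\support\mathcal{V})^+|}{|\mathcal{V}|}\Bigr)
\]
is carried by translation-invariant families of clusters (those with $n(\mathcal{V})\ge2$), hence is of order $|C_2(B_N)^+|$, not $o(|C_2(B_N)^+|)$, and after dividing it leaves the fixed constant $\sum_{\mathcal{V}\ni p_0}\bigl(1/|(\support\mathcal{V})^+|-1/|\mathcal{V}|\bigr)\Psi_\beta(\mathcal{V})$. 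In short, you have correctly located a gap that the paper's own proof shares; neither argument, as written, fully justifies the claimed identity, and your proposed patch does not close it.
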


\begin{proof}
    Write $\Xi_{1^+,1^+,p_0} = \Xi_{1^+,1^+,p_0}(B_N)$. By~Lemma~\ref{lem:partition-functions} (using also Lemma~\ref{lemma: small vortices}), we have
    \begin{equation*}
        \begin{split}
            &\log Z_{\beta,N} = \sum_{\mathcal{V} \in \Xi} \Psi_\beta(\mathcal{V})
            =
            \sum_{p \in C_2(B_N)^+} \sum_{\mathcal{V} \in \Xi_{1^+,2(m-1)^+,p}} \frac{\Psi_\beta(\mathcal{V})}{|\mathcal{V}|}.
        \end{split}
    \end{equation*}

    Let \( k \geq 2(m-1)\) be arbitrary. Without loss of generality, can assume that \( N \) is large enough to ensure that \(\dist(p_0,\partial B_N)>k.\) Then, by Lemma~\ref{lemma: new upper bound}, using also Remark~\ref{remark: complex beta new}, it follows that for any \( p \in C_2(B_N),\) we have 
    \begin{equation}\label{eq: bounded and tail}
        \sum_{\mathcal{V} \in \Xi_{1^+,k^+,p}} \frac{\bigl| \Psi_\beta(\mathcal{V}) \bigr|  }{|\mathcal{V}|}
        \leq
        \frac{1}{k}\sum_{\mathcal{V} \in \Xi_{1^+,k^+,p}} \bigl| \Psi_\beta(\mathcal{V}) \bigr| 
        \leq
        \varepsilon(\beta,m,k) \coloneqq    C_{\beta^*} k^{-1}e^{-4(\beta-\beta^*) k},
    \end{equation}
    Note that
    \begin{equation*}
        \lim_{k \to \infty} \varepsilon(\beta,m,k) = 0
    \end{equation*}
    uniformly on compact subsets of the halfplane $\real \beta > \beta_0(m)$. 
    Now fix some \( p \in C_2(B_N)^+\) with \(\dist(p,\partial B_N)>k. \) 
    Since \(\dist(p_0,\partial B_N)>k \) and \(\dist(p,\partial B_N)>k, \) for each \( i <k \) there is a bijection \( \Xi_{1^+,i,{p_0}} \to \Xi_{1^+,i,p}\) which maps each \( \mathcal{V} \in \Xi_{1^+,i,p} \) to a translation of \(  \mathcal{V}\) in \( \Xi_{1^+,i,p}.\)
    Consequently, 
    \begin{align*}
        & \sum_{\mathcal{V} \in \Xi_{1^+,k^-,p} } \frac{\Psi_\beta(\mathcal{V})}{|\mathcal{V}|} 
        =
        \sum_{\mathcal{V} \in \Xi_{1^+,k-,p_0} } \frac{\Psi_\beta(\mathcal{V})}{|\mathcal{V}|} 
        ,
    \end{align*}
    and hence
    \begin{align*}
        &\biggl| \sum_{\mathcal{V} \in \Xi_{1^+,1^+,p}} \frac{\Psi_\beta(\mathcal{V})}{|\mathcal{V}|}
        -
        F_N(\beta)
        \biggr|
        =
        \biggl| \sum_{\mathcal{V} \in \Xi_{1^+,1^+,p}} \frac{\Psi_\beta(\mathcal{V})}{|\mathcal{V}|}
        -
        \sum_{\mathcal{V} \in \Xi_{1^+,1^+,p_0} } \frac{\Psi_\beta(\mathcal{V})}{|\mathcal{V}|}
        \biggr|
        \leq 2\varepsilon(\beta,m,j).
    \end{align*}

    Finally, we note that there is a constant \( C_m'\) such that
    \begin{equation}\label{eq: plaquettes close to boundary}
        \bigl\{ p \in C_2(B_N)^+ \colon \dist(p,\partial B_N) \leq k \bigr\} \leq C_m' k N^{m-1}.
    \end{equation}
    
    We now combine the above observations as follows.
    By~\eqref{eq: bounded and tail} and~\eqref{eq: plaquettes close to boundary}, we have
    \begin{equation*}
        \begin{split}
            &\biggl| \log Z_{\beta,N}  
            - \sum_{\substack{p \in C_2(B_N)^+ \mathrlap{\colon}\\ \dist(p,\partial B_N)>k}}\; \sum_{\mathcal{V} \in \Xi_{1^+,1^+,p}} \frac{\Psi_\beta(\mathcal{V})}{|\mathcal{V}|}
            \biggr|
            \leq C_m' k N^{m-1} \varepsilon(\beta,m,1).
        \end{split}
    \end{equation*}
    Using~\eqref{eq: bounded and tail}, it follows that
    \begin{equation*}
        \begin{split}
            &\biggl| \log Z_{\beta,N}  
            - \sum_{\substack{p \in C_2(B_N)^+ \mathrlap{\colon}\\ \dist(p,\partial B_N)>k}}\; F_N(\beta)
            \biggr|
            \leq 2\varepsilon(\beta,m,k)\bigl|C_2(B_N)^+\bigr| + C_m' k N^{m-1} \varepsilon(\beta,m,1).
        \end{split}
    \end{equation*}
    Again using~\eqref{eq: bounded and tail} and~\eqref{eq: plaquettes close to boundary}, we get
    \begin{equation*}
        \begin{split}
            &\biggl| \log Z_{\beta,N}  
            - \bigl|C_2(B_N)^+\bigr| F_N(\beta)
            \biggr|
            \leq 2\varepsilon(\beta,m,k)\bigl|C_2(B_N)^+\bigr| + 2C_m' k N^{m-1} \varepsilon(\beta,m,1).
        \end{split}
    \end{equation*}
    Dividing both sides by \( |C_2(B_N)^+|\) and letting \( N \to \infty,\) we finally obtain
    \begin{equation*}
        \begin{split}
            &\lim_{N \to \infty}\biggl| \frac{\log Z_{\beta,N}}{|C_2(B_N)^+|}  
            -  F_N(\beta)
            \biggr|
            \leq 2\varepsilon(\beta,m,k)
        \end{split}
    \end{equation*}
    and this bound is decreasing in $\real \beta > \beta_0$.
    Since \( k \) was arbitrary, the desired conclusion follows.
\end{proof}

 \begin{lemma}\label{lemma: convergence of cluster partition}
    Let \( \real \beta > \beta_0(m). \)   Then \( F_N(\beta) \) 
    converges as \( N \to \infty\) locally uniformly.
\end{lemma}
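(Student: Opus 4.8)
The plan is to show that $\{F_N(\beta)\}_{N}$ is a Cauchy sequence, uniformly on compact subsets of $\{\real\beta > \beta_0(m)\}$; local uniform convergence of analytic functions then follows, and the limit is analytic by Weierstrass. First I would fix a compact set $K$ in the open halfplane and pick $\beta^* \in (\beta_0(m), \inf_{\beta\in K}\real\beta)$. The key structural observation is the one already used in the proof of Lemma~\ref{eq: proposition: free energy}: because $p_0 \in \bigcap_{N\ge 1} C_2(B_N)^+$ is a fixed plaquette, once $N$ is large enough that $\dist(p_0,\partial B_N) > k$, every vortex cluster $\mathcal{V} \in \Xi_{1^+, k^-, p_0}(B_N)$ is supported in the interior and is therefore also a legitimate cluster in $B_{N'}$ for all $N' \ge N$ (clusters of total size $< k$ occupy a bounded neighborhood of $p_0$, which stabilizes). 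Hence for $N' \ge N$ large enough,
\begin{equation*}
    \sum_{\mathcal{V}\in\Xi_{1^+,k^-,p_0}(B_N)} \frac{\Psi_\beta(\mathcal{V})}{|\mathcal{V}|}
    =
    \sum_{\mathcal{V}\in\Xi_{1^+,k^-,p_0}(B_{N'})} \frac{\Psi_\beta(\mathcal{V})}{|\mathcal{V}|},
\end{equation*}
so the low-size parts of $F_N(\beta)$ and $F_{N'}(\beta)$ coincide.

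It then remains to control the tails. Using $|\mathcal{V}| \ge k$ on $\Xi_{1^+,k^+,p_0}$ together with Lemma~\ref{lemma: new upper bound} (and Remark~\ref{remark: complex beta new} to cover complex $\beta$, replacing $\beta$ by $\real\beta$), I would bound
\begin{equation*}
    \biggl| \sum_{\mathcal{V}\in\Xi_{1^+,k^+,p_0}(B_N)} \frac{\Psi_\beta(\mathcal{V})}{|\mathcal{V}|} \biggr|
    \le
    \frac{1}{k}\sum_{\mathcal{V}\in\Xi_{1^+,k^+,p_0}(B_N)} \bigl|\Psi_\beta(\mathcal{V})\bigr|
    \le
    C_{\beta^*} k^{-1} e^{-4(\real\beta-\beta^*)k} =: \varepsilon(\beta,m,k),
\end{equation*}
uniformly in $N$, with $\varepsilon(\beta,m,k)\to 0$ as $k\to\infty$ uniformly on $K$ (this is exactly the quantity $\varepsilon(\beta,m,k)$ from \eqref{eq: bounded and tail}). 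Combining, for all sufficiently large $N, N'$ we get $|F_N(\beta) - F_{N'}(\beta)| \le 2\varepsilon(\beta,m,k)$ on $K$. Since $k$ is arbitrary and $\varepsilon(\beta,m,k)$ can be made as small as we like uniformly on $K$, the sequence $\{F_N\}$ is uniformly Cauchy on $K$, hence converges locally uniformly.

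The only mildly delicate point — the main obstacle, such as it is — is making precise the claim that a cluster in $\Xi_{1^+,k^-,p_0}$ is literally unchanged when $B_N$ is enlarged: one must check that the notions of vortex, adjacency in $\mathcal{G}_2$, the Ursell weight $U(\mathcal{V})$, and the activity $\phi_\beta(\mathcal{V})$ are all intrinsic to the support of $\mathcal{V}$ and insensitive to the ambient box, provided $\support\mathcal{V}$ stays away from $\partial B_N$. This is immediate from the definitions (the activity depends only on $|\mathcal{V}|$, the adjacency structure only on local intersections of coboundaries, and $U$ only on the adjacency graph), but it should be stated. Everything else is a repackaging of estimates already established in the proof of Lemma~\ref{eq: proposition: free energy}.
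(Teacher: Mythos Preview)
Your proposal is correct and follows essentially the same approach as the paper: split $F_N$ into a finite part $\Xi_{1^+,k^-,p_0}$ that stabilizes once $B_N$ is large enough, and a tail $\Xi_{1^+,k^+,p_0}$ controlled uniformly in $N$ by Lemma~\ref{lemma: new upper bound} (with Remark~\ref{remark: complex beta new}), yielding a uniformly Cauchy sequence on compacts. The paper's proof is terser but structurally identical; your remark on the intrinsic nature of $U(\mathcal{V})$ and $\phi_\beta(\mathcal{V})$ makes explicit what the paper encodes in the single line $\Xi_{1^+,j,p_0}(B_N) = \Xi_{1^+,j,p_0}(B_j)$ for $j \le N$.
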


\begin{proof}
    Let \( k \geq 1. \) Then we can write 
    \begin{equation*}
        F_N(\beta) 
        =
        \sum_{\mathcal{V} \in \Xi_{1^+,k^-,p_0}(B_N)} \frac{\Psi_\beta(\mathcal{V})}{|\mathcal{V}|}
        +
        \sum_{\mathcal{V} \in \Xi_{1^+,k^+,p_0}(B_N)} \frac{\Psi_\beta(\mathcal{V})}{|\mathcal{V}|}.
    \end{equation*}
    Note that if \( j \leq N, \) then we have \( \Xi_{1^+,j,p_0}(B_N) = \Xi_{1^+,j,p_0}(B_j). \)
    By Lemma~\ref{lemma: new upper bound} and Remark~\ref{remark: complex beta new}, we have
    \begin{equation*}
        \begin{split}
            &\biggl| \sum_{\mathcal{V} \in \Xi_{1^+,k^+,p_0}(B_N)} \frac{\Psi_\beta(\mathcal{V})}{|\mathcal{V}|} \biggr|
            \leq 
            \frac{1}{k}\sum_{\mathcal{V} \in \Xi_{1^+,k^+,p_0}(B_N)}  \bigl|\Psi_\beta(\mathcal{V})\bigr| 
            \leq 
            C_{\beta^*} k^{-1} e^{4(\beta-\beta^*)k}.
        \end{split}
    \end{equation*}
    Note in particular that this upper bound does not depend on \( N,\) is decreasing in \( \real \beta,\) and tends to zero as \( k' \to \infty.\) From this the desired conclusion immediately follows.  
\end{proof}

\begin{proof}[Proof of Theorem~\ref{theorem: logZ}] 
 As in Lemma~\ref{lemma: convergence of cluster partition}, write $F_N(\beta)= \sum_{\mathcal{V} \in \Xi_{1^+,1^+,p_0}(B_N)} \Psi_\beta(\mathcal{V})/|\mathcal{V}|$. For each $N$, by Lemma~\ref{lem:partition-functions} function is analytic in the half plane $\real \beta > \beta_0(m)$ and by Lemma~\ref{lemma: convergence of cluster partition} it converges locally uniformly as $N\to \infty$ to a limiting function which is also analytic. By Lemma~\ref{eq: proposition: free energy}, $\log Z_{\beta,N}$ also converges locally uniformly to the same limit. On the other hand,
    if \( q = 1 \cdot p\) then, using Lemma~\ref{lemma: m1min1 new}, we have  
    \begin{equation} \label{mar23.1}
        \sum_{\mathcal{V} \in \Xi_{1,2(m-1),p}(B_N) } \phi_\beta(\mathcal{V})  
        =
        \sum_{\mathcal{V} \in \Xi_{1,2(m-1),q} } \phi_\beta(\mathcal{V})  
        =
        4e^{-8(m-1)\beta},
    \end{equation}  
    for all $N$ sufficiently large. Similarly, using Lemma~\ref{lemma: m1min2 new}, we have
    \begin{equation}  \label{mar23.2}
        \sum_{\mathcal{V} \in \Xi_{1,2(m-1)-2,p}(B_N) } \phi_\beta(\mathcal{V})
        =
        \sum_{\mathcal{V} \in \Xi_{1,2(m-1)-2,q}(B_N) } \phi_\beta(\mathcal{V}) 
        = 
        \bigl(24(m-1) -16\bigr) e^{-4(4(m-1)-2)\beta}.
    \end{equation}  
    On the other hand, by Lemma~\ref{lemma: new upper bound}, and Remark~\ref{remark: complex beta new} we have
    \begin{align} \label{mar23.3}
        &\sum_{\mathcal{V} \in \Xi_{1^+,4(m-1)^+,p}(B_N)}
        \bigl| \Psi_\beta(\mathcal{V}) \bigr|
        \leq C_{\beta^*} e^{-4(\beta-\beta^*)k}
    \end{align}
    for all $N$. We conclude by combining \eqref{mar23.1}, \eqref{mar23.2}, and \eqref{mar23.3}.
\end{proof}

\section{Proof of Theorem~\ref{theorem: main theorem}}

\begin{proof}[Proof of Theorem~\ref{theorem: main theorem}]
    By combining Proposition~\ref{proposition: m = 1 contribution new} and Proposition~\ref{proposition: m >=2 contribution new}, we obtain
    \begin{equation*}
        \begin{split}
            &
            -
            3D_1^* \ell e^{-16(m-1)\beta} 
            \leq 
            \ell H(\ell) +  
            \sum_{\mathcal{V}\in \Xi} \bigl( \Psi_{\beta,q}(\mathcal{V}) - \Psi_\beta(\mathcal{V})\Bigr)
            \leq
            2D_1^* \ell  e^{-16\beta (m-1) } .
        \end{split}
    \end{equation*}
    Using  Proposition~\ref{proposition: the cluster expansion}, the proof is complete.
\end{proof}

\section{Proof of Theorem~\ref{theorem: limit of ratio exists 2} and Theorem~\ref{theorem: limit of ratio exists}}

In this section, we state and prove Proposition~\ref{eq: proposition rectangle limit 1} and Proposition~\ref{eq: proposition rectangle limit 2}, which are the more technical versions of the two main results Theorem~\ref{theorem: limit of ratio exists 2} and Theorem~\ref{theorem: limit of ratio exists}. The main tool in their proofs is Lemma~\ref{lemma: general limit bound 0} below. 

We now introduce some additional notation.
Fix a mapping \( \nu \mapsto \sigma^\nu\) from \( \Lambda \) to \(\Omega^1(B_N,G) \) which satisfies the following.
\begin{enumerate}
    \item For each \( \nu \in \Lambda,\) we have \( d \sigma^\nu = \nu .\)
    \item For each \( \nu \in \Lambda,\) the support of \( \sigma^\nu\) is contained in the smallest box \( B_{\mathcal{V}}\) that contains the support of \( \nu.\) 
    \item If \( \tau \) is a translation or rotation of the lattice with the property that \( \support \nu \circ \tau \subseteq C_2(B_N),\) then \( \sigma^{\nu \circ \tau} = \sigma^{\nu}\circ \tau.\)
\end{enumerate}
Note that such a mapping exists by Lemma~\ref{lemma: poincare}.

Given \( \mathcal{V} \in \Xi,\) let \( E_{\mathcal{V}} = \bigcup_{\nu \in \mathcal{V}} \support \sigma^\nu.\)
Given an edge \( e \in C_1(B_N)^+\) and \( m,m' \geq 1,\) let 
\begin{equation*}
    \Xi_{m,m',e} \coloneqq \bigl\{ \mathcal{V} \in \Xi_{m,m'} \colon e \in E_{\mathcal{V}} \bigr\}.
\end{equation*}
Define \( \Xi_e,\) \( \Xi_{m+,m',e}, \) \( \Xi_{m,m'+,e}, \)  and \( \Xi_{m+,m'+,e} \) as before. Finally, we let \( \Xi_{1^+,m'-,e} = \Xi_{1^+,1^+,e} \smallsetminus \Xi_{1^+,m'+,e}. \)

Fix any \( e_0 \in \bigcap_{N=1}^\infty C_1(B_{N})\) and let \( \gamma^0\) be the bi-infinite line through \( e_0\).
Given \( R \geq 1,\) let \( \hat \gamma^0\) be an axis-parallel translation of \( - \gamma^0\) such that the distance between \( \gamma^0\) and \( \hat \gamma^0\) is \( R,\) and let \( \gamma^R = \gamma^0 + \hat \gamma^0. \)
Let \( q^R\) be the bi-infinite strip with boundary \( \gamma^R,\) and let \( q^0\) be a half-plane with boundary \( \gamma^0\) (see~Figure~\ref{figure: biinfinite paths}).  We use the same notations for the restrictions of \( \gamma^0,\) \( \gamma^R,\) \( q^0,\) and \( q^R\) to \( C_1(B_N)\) and \( C_2(B_N)\)  respectively.
\begin{figure}[ht]
    \centering 
    \begin{subfigure}{.45\textwidth}\centering
        \begin{tikzpicture}[scale=0.24]

        \foreach \x in {0,...,23}{
            \foreach \y in {0,...,11}{
                \fill[fill=detailcolor07, fill opacity=0.18] (\x+0.02,\y+0.02) -- (\x+0.02,\y+1-0.02) -- (\x+1-0.02,\y+1-0.02)  -- (\x+1-0.02,\y+0.02);
            }
        } 
                  
        \draw[->] (0,0) -- (12,0) node[below] {\(\gamma^0\)}  -- (24,0);  

        \draw (12,6) node {\( q^0\)};
 
    \end{tikzpicture}
    \caption{The path \( \gamma^0 \) and the oriented surface \( q^0 \) (purple plaquettes).}
    \end{subfigure}
    \hspace{2em}
    \begin{subfigure}{.45\textwidth}\centering
        \begin{tikzpicture}[scale=0.24]

        \foreach \x in {0,...,23}{
            \foreach \y in {0,...,7}{
                \fill[fill=detailcolor07, fill opacity=0.18] (\x+0.02,\y+0.02) -- (\x+0.02,\y+1-0.02) -- (\x+1-0.02,\y+1-0.02)  -- (\x+1-0.02,\y+0.02);
            }
        } 
                  
        \draw[->] (0,0) -- (12,0) node[below] {\(\gamma^0\)}  -- (24,0);

        \draw[<-] (0,8) -- (12,8) node[above] {\(\hat \gamma^0\)}  -- (24,8); 

        \draw [decorate,
    decoration = {brace,amplitude=6pt,mirror},detailcolor00] (25,0) --  (25,8) node[midway, left, xshift=20pt] {\color{black}\( R\)}; 

    \draw (12,4) node {\( q^R\)};

    \end{tikzpicture}
    \caption{The 1-chain \( \gamma^R = \gamma^0 + \hat \gamma^0\) and the oriented surface \(q^R\) (purple plaquettes).}
    \end{subfigure}
    \caption{In the figures above, we illustrate \( \gamma^0 \) and \( \gamma^R,\) as well as the oriented surfaces \( q^0 \) and \( q^R.\)}
    \label{figure: biinfinite paths}
\end{figure}

Given a rectangular loop \( \gamma \) and  \( j \geq 1, \) let \( \gamma_{c,j} \) be the restriction of \( \gamma \) to the set of edges that are on distance at most \( j \) from a corner of \( \gamma \) (see Figure~\ref{figure: gammacj}). Note that if \( \gamma\) is a rectangular loop, then \( |\gamma_{c,j}| \leq 8j.\)

\begin{figure}[ht]
    \centering
    \hfill
    \begin{subfigure}[b]{0.45\textwidth}
    \centering
    \begin{tikzpicture}[scale=0.2] 
                 
        \draw[->] (12,0) -- (24,0) -- (24,6);
        \draw[->] (24,6) -- (24,12) -- (12,12);
        \draw[->] (12,12) -- (0,12) -- (0,6);
        \draw[->] (0,6) -- (0,0) -- (12,0);
 
    \end{tikzpicture}
    \caption{The loop \( \gamma.\)}
    \end{subfigure}
    \hspace{2em}
    \begin{subfigure}[b]{0.45\textwidth}
    \centering
    \begin{tikzpicture}[scale=0.2]  
                 
        \draw[->] (21,0) -- (24,0) -- (24,3);
        \draw[->] (24,9) -- (24,12) -- (21,12);
        \draw[->] (3,12) -- (0,12) -- (0,9);
        \draw[->] (0,3) -- (0,0) -- (3,0); 
 
    \end{tikzpicture}
    \caption{The path \( \gamma_{c,j}.\)}
    \end{subfigure} 
    \hfill
    
    \caption{In the figures above, we draw a loop \( \gamma \) and a corresponding path \( \gamma_{c,j}.\)}
    \label{figure: gammacj}
\end{figure}

\begin{lemma}\label{lemma: general limit bound 0}
    Let \( \real  \beta > \beta_0(m).\) 
    Let \( \gamma\) be a rectangular loop with axis-parallel sides with lengths \( R\) and \( T\), respectively, where \( R \leq T.\) Let \( q \) be the unique flat oriented surface with \( \partial q = \gamma.\) 
    Let  \( k \leq R\) and \( k' \leq T.\)  
    Assume that \( N \) is large enough to ensure that  $\support \gamma \subset C_1(B_{N-k}).$ 
    Then
    \begin{equation*}
            \begin{split} 
            &\biggl| \sum_{\mathcal{V} \in \Xi_{1^+,k'-}} \Psi_\beta(\mathcal{V}) \pigl( 1-\rho \bigl(\mathcal{V}(q)\bigr) \pigr)
            \\&\qquad\qquad-  
            |\gamma | \sum_{\mathcal{V} \in \Xi_{1^+,k-,e_0}} \bigl|E_{\mathcal{V}}  \cap \support \gamma^0 \bigr|^{-1} \Psi_\beta(\mathcal{V}) \pigl( 1-\rho \bigl(\mathcal{V}(q^0)\bigr) \pigr) 
            \\&\qquad\qquad-  
            |\gamma | \sum_{\mathcal{V} \in \Xi_{1^+,k'-,e_0}\smallsetminus\Xi_{1^+,k-,e_0}} \bigl|E_{\mathcal{V}}  \cap \support \gamma^R \bigr|^{-1} \Psi_\beta(\mathcal{V})\pigl( 1-\rho \bigl(\mathcal{V}(q^R)\bigr) \pigr)  
            \biggr|
            \\&\qquad\leq   
            4\sum_{j=0}^{k'-1}|\gamma_{c,j} | \sum_{\mathcal{V} \in \Xi_{1^+,j,e_0}} \bigl| \Psi_\beta(\mathcal{V}) \bigr| .
            \end{split}
        \end{equation*} 
\end{lemma}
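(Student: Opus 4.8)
The plan is to expand both the left‐hand quantity and the two target sums as sums over the edges of \( \gamma \), to match them edge by edge after a rigid motion of the lattice, and to observe that what is left over is carried entirely by clusters forced to reach a corner of \( \gamma \). First I would note that \( 1-\rho\bigl(\mathcal{V}(q)\bigr)\neq 0 \) implies \( \mathcal{V}(q)\neq 0 \), and since \( \mathcal{V}(q)=\sum_{\nu}n_{\mathcal{V}}(\nu)\,\nu(q) \) with \( \nu(q)=\sigma^\nu(\gamma) \) and \( \support\sigma^\nu\subseteq E_{\mathcal{V}} \), this forces \( E_{\mathcal{V}}\cap\support\gamma\neq\emptyset \); hence
\[
\sum_{\mathcal{V}\in\Xi_{1^+,k'-}}\Psi_\beta(\mathcal{V})\bigl(1-\rho(\mathcal{V}(q))\bigr)=\sum_{e\in\support\gamma}\ \sum_{\mathcal{V}\in\Xi_{1^+,k'-,e}}\bigl|E_{\mathcal{V}}\cap\support\gamma\bigr|^{-1}\Psi_\beta(\mathcal{V})\bigl(1-\rho(\mathcal{V}(q))\bigr),
\]
and I split the inner sum at \( |\mathcal{V}|=k \). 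Each target sum, multiplied by \( |\gamma| \), is a sum of \( |\gamma| \) copies of its value at \( e_0 \). For each \( e\in\support\gamma \) I fix a lattice isometry \( \tau_e \) (a translation composed with a rotation by a multiple of \( 90^\circ \)) with \( \tau_e(e_0)=e \) that sends \( \gamma^0 \), respectively \( \gamma^R \), to a \( 1 \)-chain agreeing with \( \gamma \) along the side of \( \gamma \) through \( e \); the hypotheses \( \support\gamma\subseteq C_1(B_{N-k}) \), \( k\le R\le T \), and \( k'\le T \) guarantee that all clusters involved stay inside \( B_N \) after applying \( \tau_e \). By the equivariance of \( \nu\mapsto\sigma^\nu \) and the invariance of \( \phi_\beta \) under lattice isometries, \( \mathcal{V}\mapsto\mathcal{V}\circ\tau_e^{-1} \) is an activity‐preserving bijection \( \Xi_{1^+,\,\cdot\,,e_0}\to\Xi_{1^+,\,\cdot\,,e} \) with \( E_{\mathcal{V}\circ\tau_e^{-1}}=\tau_e(E_{\mathcal{V}}) \), so the \( e_0 \)-copy of the first target sum becomes \( \sum_{\mathcal{V}\in\Xi_{1^+,k-,e}}|E_{\mathcal{V}}\cap\support(\tau_e\gamma^0)|^{-1}\Psi_\beta(\mathcal{V})\bigl(1-\rho(\mathcal{V}(\tau_e q^0))\bigr) \), and similarly the second over \( \Xi_{1^+,k'-,e}\setminus\Xi_{1^+,k-,e} \) against \( \tau_e\gamma^R,\tau_e q^R \). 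Thus the left‐hand side equals \( \sum_{e\in\support\gamma}D(e) \), where \( D(e) \) is a sum over \( \mathcal{V} \) of differences \( [\,\cdot\,]_q-[\,\cdot\,]_{\tau_e q^0} \) (for \( |\mathcal{V}|<k \)) or \( [\,\cdot\,]_q-[\,\cdot\,]_{\tau_e q^R} \) (for \( k\le|\mathcal{V}|<k' \)).

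The crucial observation is that, because \( \support\sigma^\nu\subseteq E_{\mathcal{V}} \), the quantities \( \mathcal{V}(q)=\sum_\nu n_{\mathcal{V}}(\nu)\sigma^\nu(\gamma) \), \( \mathcal{V}(\tau_e q^0)=\sum_\nu n_{\mathcal{V}}(\nu)\sigma^\nu(\tau_e\gamma^0) \), \( |E_{\mathcal{V}}\cap\support\gamma| \), and \( |E_{\mathcal{V}}\cap\support(\tau_e\gamma^0)| \) depend only on the restrictions of \( \gamma \) and \( \tau_e\gamma^0 \) to the edge set \( E_{\mathcal{V}} \); hence a summand of \( D(e) \) vanishes unless \( E_{\mathcal{V}} \) contains an edge on which \( \gamma \) disagrees with \( \tau_e\gamma^0 \) (resp.\ \( \tau_e\gamma^R \)). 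Now \( E_{\mathcal{V}} \) is contained in the bounding box \( B_{\mathcal{V}} \) of \( \mathcal{V} \) — since \( \support\sigma^\nu \) lies in the smallest box around \( \support\nu\subseteq\support\mathcal{V} \) — and \( B_{\mathcal{V}} \) has side length at most \( |\mathcal{V}| \), because \( \support\mathcal{V} \) is connected in \( \mathcal{G}_2 \) and has at most \( |\mathcal{V}| \) plaquettes. Since \( \gamma \) coincides with \( \tau_e\gamma^0 \) along the whole side of \( \gamma \) containing \( e \) and differs from it only near the corners of \( \gamma \), and since for \( |\mathcal{V}|<k\le R\le T \) such a disagreement edge cannot lie on the side of \( \gamma \) opposite to \( e \), the box \( B_{\mathcal{V}} \) must then contain a corner of \( \gamma \); the analogous statement holds for the comparison with \( \tau_e\gamma^R \), the extra ``phantom'' line of the strip being reachable only when \( |\mathcal{V}|\ge R \), which again places a corner of \( \gamma \) in \( B_{\mathcal{V}} \). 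Therefore, whenever a summand of \( D(e) \) is nonzero, \( e \) lies within distance \( |\mathcal{V}| \) of a corner of \( \gamma \), i.e.\ \( e\in\support\gamma_{c,|\mathcal{V}|} \).

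To finish, I bound each nonzero summand of \( D(e) \) by \( |[\,\cdot\,]_q|+|[\,\cdot\,]_{\tau_e q^\bullet}|\le 4|\Psi_\beta(\mathcal{V})| \) — using \( |1-\rho(\,\cdot\,)|\le 2 \) and that the two normalising factors are at most \( 1 \) — and sum over \( e \) and \( \mathcal{V} \); reorganising the double sum by the value \( j=|\mathcal{V}| \) and using translation invariance of \( \phi_\beta \) once more yields
\[
\Bigl|\sum_{e\in\support\gamma}D(e)\Bigr|\le 4\sum_{j=0}^{k'-1}\ \sum_{e\in\support\gamma_{c,j}}\ \sum_{\mathcal{V}\in\Xi_{1^+,j,e}}|\Psi_\beta(\mathcal{V})|=4\sum_{j=0}^{k'-1}|\gamma_{c,j}|\sum_{\mathcal{V}\in\Xi_{1^+,j,e_0}}|\Psi_\beta(\mathcal{V})|,
\]
which is the asserted bound. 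I expect the main obstacle to be the geometric bookkeeping of the second paragraph: one must check, uniformly in the cluster size up to \( k' \) and in all the cases (edge on a long versus a short side of \( \gamma \), and cluster size below \( k \) versus between \( k \) and \( k' \)), that a disagreement edge inside \( B_{\mathcal{V}} \) really does force a corner of \( \gamma \) into \( B_{\mathcal{V}} \), so that the counting weight comes out as \( |\gamma_{c,j}| \) rather than a dilated version of it and the constant is exactly \( 4 \).
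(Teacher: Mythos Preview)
Your argument is correct and follows essentially the same route as the paper. The only organizational difference is that the paper first compares the whole sum over $\Xi_{1^+,k'-}$ to the $\gamma^R$-target (their inequality~(7.3)) and only afterwards observes that for clusters of size $<k$ one has the exact identities $|E_{\mathcal{V}}\cap\support\gamma^R|=|E_{\mathcal{V}}\cap\support\gamma^0|$ and $\mathcal{V}(q^R)=\mathcal{V}(q^0)$, whereas you split at size $k$ from the outset; the resulting bound and the constant $4$ are obtained in the same way. Your geometric justification (that a nonvanishing difference forces $B_{\mathcal{V}}$ to contain a corner, hence $e\in\gamma_{c,|\mathcal{V}|}$) is in fact more explicit than what the paper writes, which simply asserts the analogue of your ``crucial observation'' as equation~(7.2). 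One small point: in your final displayed line the last step should be an inequality $\le$ rather than an equality, since the hypothesis $\support\gamma\subset C_1(B_{N-k})$ only guarantees translation invariance for $j<k$; for $k\le j<k'$ you only get an injection $\Xi_{1^+,j,e}\hookrightarrow\Xi_{1^+,j,e_0}$, which is all that is needed.
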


\begin{proof}
    Note that, since \( \gamma \) is a rectangular loop and \( q \) is a flat oriented surface with boundary \( \gamma, \) we have $\support q \subset B_{N-k}.$

    Fix \( j \in \{ 1,2,\dots, k'-1 \}.\) Then 
    \begin{equation}\label{eq: gamma split part 2}
        \begin{split}
            &\sum_{\mathcal{V} \in \Xi_{1^+,j}} \Psi_\beta(\mathcal{V}) \pigl( 1-\rho \bigl(\mathcal{V}(q)\bigr) \pigr) 
            =
            \sum_{e \in \gamma}\sum_{\mathcal{V} \in \Xi_{1^+,j,e}} |E_{\mathcal{V}} \cap \support \gamma|^{-1} \Psi_\beta(\mathcal{V}) \pigl( 1-\rho \bigl(\mathcal{V}(q)\bigr) \pigr) 
            \\&\qquad=
            \sum_{e \in \gamma_{c,k'}}\sum_{\mathcal{V} \in \Xi_{1^+,j,e}} |E_{\mathcal{V}}  \cap \support \gamma|^{-1} \Psi_\beta(\mathcal{V}) \pigl( 1-\rho \bigl(\mathcal{V}(q)\bigr) \pigr) 
            \\&\qquad\qquad+
            \sum_{e \in \gamma\smallsetminus \gamma_{c,k'}}\sum_{\mathcal{V} \in \Xi_{1^+,j,e}} \bigl|E_{\mathcal{V}}  \cap \support \gamma \bigr|^{-1} \Psi_\beta(\mathcal{V}) \pigl( 1-\rho \bigl(\mathcal{V}(q)\bigr) \pigr) .
            \end{split}
        \end{equation} 
        Since \( j<k' \leq R \leq T , \) for any \( e \in \gamma\smallsetminus \gamma_{c,j}, \) we have 
        \begin{equation}\label{eq: gammaR part}
            \begin{split}
            &
            \sum_{\mathcal{V} \in \Xi_{1^+,j,e}} \bigl| E_{\mathcal{V}}  \cap \support \gamma\bigr|^{-1} \Psi_\beta(\mathcal{V}) \pigl( 1-\rho \bigl(\mathcal{V}(q)\bigr) \pigr) 
            \\&\qquad=
            \sum_{\mathcal{V} \in \Xi_{1^+,j,e_0}} \bigl|E_{\mathcal{V}}  \cap \support \gamma^R  \bigr|^{-1} \Psi_\beta(\mathcal{V}) \pigl( 1-\rho \bigl(\mathcal{V}(q^R )\bigr) \pigr) .
            \end{split}
        \end{equation} 
        Combining~\eqref{eq: gamma split part 2}~and~\eqref{eq: gammaR part}, we obtain
        \begin{equation}\label{eq: first useful ineq}
            \begin{split} 
            &\biggl| \sum_{\mathcal{V} \in \Xi_{1^+,k'-}} \Psi_\beta(\mathcal{V}) \pigl( 1-\rho \bigl(\mathcal{V}(q)\bigr) \pigr)
            -  
            |\gamma | \sum_{\mathcal{V} \in \Xi_{1^+,k'-,e_0}} \bigl|E_{\mathcal{V}}  \cap \support \gamma^R \bigr|^{-1} \Psi_\beta(\mathcal{V}) \pigl( 1-\rho \bigl(\mathcal{V}(q^R)\bigr) \pigr)  
            \biggr|
            \\&\qquad\leq   
            4\sum_{j=0}^{k'-1}|\gamma_{c,j} | \sum_{\mathcal{V} \in \Xi_{1^+,j,e_0}} \bigl| \Psi_\beta(\mathcal{V}) \bigr|   .
            \end{split}
        \end{equation}
        Finally, we note that since  \( k \leq R,\)  for any \( \mathcal{V} \in \Xi_{1^+,k-,e_0} \) we have
        \begin{equation}\label{eq: gamma0 part 1}
            \bigl|E_{\mathcal{V}}  \cap \support \gamma^R  \bigr|
            =
            \bigl|E_{\mathcal{V}}  \cap \support \gamma^0  \bigr|
        \end{equation}
        and
        \begin{equation}\label{eq: gamma0 part 2}
            \mathcal{V}(q^R ) 
            =
            \mathcal{V}(q^0). 
        \end{equation}
        Combining~\eqref{eq: first useful ineq},~\eqref{eq: gamma0 part 1}~and~\eqref{eq: gamma0 part 2}, we obtain the desired conclusion.
\end{proof}

\begin{lemma}\label{lemma: upper bound on term}
    For any \( k \geq 1\) and any \( e \in C_1(B_N)^+,\) we have
    \begin{equation}\label{eq: upper bound on term 1}
        \begin{split}
            & \sum_{\mathcal{V} \in \Xi_{1^+,k^+,e}} \bigl| \Psi_\beta(\mathcal{V}) \bigr|  
            \leq
            2C_{\beta^*} \binom{m}{2} \sum_{j=k}^\infty
            (j+1)^m  e^{-4(\beta-\beta^*)j} 
        \end{split}
    \end{equation}  
    and
    \begin{equation}\label{eq: upper bound on term 2}
        \sum_{j=0}^{k-1} j \sum_{\mathcal{V} \in \Xi_{1^+,j,e}} \bigl| \Psi_\beta(\mathcal{V}) \bigr|   \leq 
        2C_{\beta^*} \binom{m}{2} \sum_{j=2(m-1)}^{k-1} j 
        \sum_{i=j}^\infty
         (i+1)^m  e^{-4(\beta-\beta^*)i} .
    \end{equation}
\end{lemma}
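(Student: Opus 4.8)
The plan is to deduce \eqref{eq: upper bound on term 2} almost immediately from \eqref{eq: upper bound on term 1}, so the real work is \eqref{eq: upper bound on term 1}. Indeed, by Lemma~\ref{lemma: small vortices} every vortex $\nu$ has $|(\support \nu)^+|\ge 2(m-1)$, hence $|\mathcal{V}|\ge 2(m-1)$ for every $\mathcal{V}\in\Xi$ and $\Xi_{1^+,j,e}=\emptyset$ for $j<2(m-1)$. Thus the left side of \eqref{eq: upper bound on term 2} equals $\sum_{j=2(m-1)}^{k-1} j\sum_{\mathcal{V}\in\Xi_{1^+,j,e}}|\Psi_\beta(\mathcal{V})|$, and since $\Xi_{1^+,j,e}\subseteq\Xi_{1^+,j^+,e}$ one may apply \eqref{eq: upper bound on term 1} with $k$ replaced by $j$ to each inner sum; multiplying by $j$ and summing over $j$ reproduces exactly the right side of \eqref{eq: upper bound on term 2}.

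For \eqref{eq: upper bound on term 1} the idea is to convert the edge anchor $e$ into a plaquette anchor so that Lemma~\ref{lemma: new upper bound} applies. Fix $\mathcal{V}\in\Xi_{1^+,k^+,e}$. Since $e\in E_{\mathcal{V}}=\bigcup_{\nu\in\mathcal{V}}\support\sigma^\nu$, choose $\nu\in\mathcal{V}$ with $e\in\support\sigma^\nu$; by property~(2) of the fixed primitives, $\support\sigma^\nu$ is contained in the smallest box containing $\support\nu$. As $\nu$ is a vortex, $(\support\nu)^+$ induces a connected subgraph of $\mathcal{G}_2$, and adjacent plaquettes of $\mathcal{G}_2$ lie in the boundary of a common $3$-cell, hence at bounded lattice distance; a connected set of $n:=|(\support\nu)^+|$ plaquettes therefore has diameter $O(n)$, so every side length of that box is $\le n+O(1)\le|\mathcal{V}|+O(1)$. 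Consequently $e$ and every $p\in(\support\nu)^+\subseteq(\support\mathcal{V})^+$ lie in a common box of side length $\lesssim|\mathcal{V}|$, so $\dist(p,e)\lesssim|\mathcal{V}|$. Fixing one such plaquette $p=p(\mathcal{V})$ and using $|\mathcal{V}|\ge k$, we get $\mathcal{V}\in\Xi_{1^+,\max(k,\dist(p,e))^+,p}$.

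Summing over $\mathcal{V}$, then dropping the constraint $p(\mathcal{V})=p$ and summing over all anchors $p\in C_2(B_N)^+$,
\[
\sum_{\mathcal{V}\in\Xi_{1^+,k^+,e}}\bigl|\Psi_\beta(\mathcal{V})\bigr|\le\sum_{p\in C_2(B_N)^+}\;\sum_{\mathcal{V}\in\Xi_{1^+,\max(k,\dist(p,e))^+,p}}\bigl|\Psi_\beta(\mathcal{V})\bigr|\le C_{\beta^*}\sum_{p\in C_2(B_N)^+}e^{-4(\beta-\beta^*)\max(k,\dist(p,e))},
\]
using Lemma~\ref{lemma: new upper bound}. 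Now group the $p$'s according to the value of $\dist(p,e)$, bounding the number of plaquettes with $\dist(p,e)\le j$ by $\binom{m}{2}(j+1)^m$ (equivalently via Lemma~\ref{lemma: plaquettes on a given distance}), and split into the ranges $\dist(p,e)<k$ and $\dist(p,e)\ge k$. On the first range the exponential is $e^{-4(\beta-\beta^*)k}$ and the count is $\le\binom{m}{2}(k+1)^m$, contributing no more than the $j=k$ term of $\sum_{j\ge k}(j+1)^m e^{-4(\beta-\beta^*)j}$; the second range contributes at most $\binom{m}{2}\sum_{j\ge k}(j+1)^m e^{-4(\beta-\beta^*)j}$. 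Adding the two accounts for the factor $2$ and gives \eqref{eq: upper bound on term 1}.

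The step I expect to require the most care is the geometric claim that $e\in E_{\mathcal{V}}$ forces $\support\mathcal{V}$ to come within distance $O(|\mathcal{V}|)$ of $e$: this is exactly where one combines the box-support property of the chosen primitive $\sigma^\nu$ with connectivity of the vortex (to control the side lengths of the minimal box by $|(\support\nu)^+|$), and where one must then track the additive and multiplicative constants in the diameter estimate and in the per-orientation plaquette count so that the final bound lands on the clean form $2C_{\beta^*}\binom{m}{2}\sum_{j\ge k}(j+1)^m e^{-4(\beta-\beta^*)j}$. Everything else is routine summation using the already-proved Lemmas~\ref{lemma: new upper bound}, \ref{lemma: plaquettes on a given distance}, and \ref{lemma: small vortices}.
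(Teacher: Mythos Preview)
Your proposal is correct and follows essentially the same route as the paper. Both arguments derive \eqref{eq: upper bound on term 2} from \eqref{eq: upper bound on term 1} via Lemma~\ref{lemma: small vortices}, and both prove \eqref{eq: upper bound on term 1} by converting the edge anchor \(e\) into a plaquette anchor (using that \(e\in E_{\mathcal V}\) forces some \(p\in\support\mathcal V\) to lie within distance \(|\mathcal V|\) of \(e\)) and then invoking Lemma~\ref{lemma: new upper bound}. The paper organizes the sum slightly differently: it first stratifies by \(j=|\mathcal V|\), observes that \(\support\mathcal V\cap P_j\neq\emptyset\) where \(P_j\) is the set of plaquettes at distance \(\le j\) from \(e\), and then uses the direct count \(|P_j|\le 2\binom{m}{2}(j+1)^m\); this produces the factor \(2\) immediately and avoids your split into the ranges \(\dist(p,e)<k\) and \(\dist(p,e)\ge k\).
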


\begin{proof}
    Let \( j \geq k, \) and let \( P_j\) be the set of all positively oriented plaquettes that are on distance at most \( j \) from \( e.\) 
    If \( \mathcal{V} \in \Xi_{1^+,j,e},\) then we must have \( \support \mathcal{V} \cap P_j \neq \emptyset.\)  From this it follows that
    \begin{equation*} 
        \sum_{\mathcal{V} \in \Xi_{1^+,k^+,e}} \bigl| \Psi_\beta (\mathcal{V}) \bigr|    
        \leq   
        \sum_{j=k}^\infty
        \sum_{p \in P_j} \sum_{\mathcal{V} \in \Xi_{1^+,j^+,p}}  
        \bigl| \Psi_\beta(\mathcal{V}) \bigr| .
    \end{equation*} 
    Note that  \( |P_j| \leq \binom{m}{2}2(j+1)^m.\) Using Lemma~\ref{lemma: plaquettes on a given distance} and Lemma~\ref{lemma: new upper bound}, we thus obtain~\eqref{eq: upper bound on term 1}.
    Finally, using first Lemma~\ref{lemma: small vortices}, we note that. 
    \begin{align*} 
        &\sum_{j=0}^{k-1} j \sum_{\mathcal{V} \in \Xi_{1^+,j,e}} \bigl| \Psi_\beta (\mathcal{V}) \bigr|   
        \leq
        \sum_{j=2(m-1)}^{k-1} j \sum_{\mathcal{V} \in \Xi_{1^+,j^+,e}} \bigl| \Psi_\beta (\mathcal{V}) \bigr|.
    \end{align*}  
    Using~\eqref{eq: upper bound on term 1}, we obtain~\eqref{eq: upper bound on term 1} as desired. This concludes the proof. 
\end{proof}

We now state and prove Proposition~\ref{eq: proposition rectangle limit 1} and Proposition~\ref{eq: proposition rectangle limit 2}, which are the more technical versions of the two main results Theorem~\ref{theorem: limit of ratio exists 2} and Theorem~\ref{theorem: limit of ratio exists}.
\begin{proposition}\label{eq: proposition rectangle limit 1}
    Let \( \real  \beta > \beta_0(m).\) 
    Let  \( (R_n)_{n \ge 1} \) and \( (T_n)_{n \ge 1}\) be non-decreasing sequences of positive integers with \( \lim_{n \to \infty} \min (R_n,T_n) = \infty.\) For each \( n \geq 1,\) let \( \gamma_n\) be a rectangular loop with axis-parallel sides with lengths \( R_n\) and \( T_n\), respectively.
    Then the limit \( \lim_{n\to \infty}    -\log \langle W_{\gamma_n} \rangle_{\beta} /|\gamma_n| \) exists and is given by
    \begin{equation*}
        \begin{split}
         &\hat V_\beta \coloneqq  V_\beta/2 \coloneqq 
         \lim_{N \to \infty}\sum_{\mathcal{V} \in \Xi_{1^+,1^+,e_0}} \bigl|E_{\mathcal{V}} \cap \support \gamma^0\bigr|^{-1} \Psi_\beta(\mathcal{V}) \pigl( 1-\rho \bigl(\mathcal{V}(q^0)\bigr) \pigr).
         \end{split}
    \end{equation*} 
    Moreover, for any \( n \geq 1, \) we have
    \begin{align*}
        \biggl| \frac{-\log \langle W_{\gamma_n} \rangle_\beta }{|\gamma_n|} - \hat V_\beta \biggr| 
        \leq& 
        32 m^2 |\gamma_n|^{-1} \sum_{j=2(m-1)}^\infty (3M)^{2j-1} (j+1)^{m+1}2^{j/(2(m-1))}e^{-4\beta j}
        \\&\quad+
        4m^2 \sum_{j=\min(R_n,T_n)}^\infty (3M)^{2j-1} (j+1)^m2^{j/(2(m-1))}e^{-4\beta j} .
    \end{align*}
\end{proposition}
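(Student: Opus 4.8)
The plan is to pass from the loop-dependent quantity $-\log\langle W_{\gamma_n}\rangle_\beta/|\gamma_n|$ to the universal series defining $\hat V_\beta$ by feeding Lemma~\ref{lemma: general limit bound 0} the truncation level $k=k'=\min(R_n,T_n)$, and then controlling the remaining errors by a priori cluster bounds. Fix $n$; relabelling the two side lengths if necessary we may assume $R_n\le T_n$, write $\ell_n=|\gamma_n|$, and let $q_n$ be the flat oriented surface with $\partial q_n=\gamma_n$. (All estimates below are in terms of $\real\beta$ and so cover complex $\beta$ with $\real\beta>\beta_0(m)$, cf.\ Remark~\ref{remark: complex beta new}.) First I would record, from Proposition~\ref{proposition: the cluster expansion} in a box $B_N$ with $\support q_n\subset B_{N-R_n}$ and then letting $N\to\infty$, that
\begin{equation*}
    -\log\langle W_{\gamma_n}\rangle_\beta=\sum_{\mathcal{V}\in\Xi}\Psi_\beta(\mathcal{V})\bigl(1-\rho(\mathcal{V}(q_n))\bigr),
\end{equation*}
the passage to the limit being justified because $1-\rho(\mathcal{V}(q_n))$ vanishes unless $\mathcal{V}(q_n)\neq 0$ and the tail over $|\mathcal{V}|\ge L$ is bounded uniformly in $N$ (Lemmas~\ref{lemma: minimal size if at distance} and~\ref{lemma: plaquettes on a given distance}, as in Lemma~\ref{lemma: new q bound}).

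I would then split this sum at $|\mathcal{V}|=R_n$. For the tail $\Xi_{1^+,R_n+}$ I bound $|1-\rho(\mathcal{V}(q_n))|\le 2\cdot\mathbb{1}(\mathcal{V}(q_n)\neq 0)$ and estimate $\sum_{\mathcal{V}\in\Xi_{1^+,R_n+,q_n}}|\Psi_\beta(\mathcal{V})|$ exactly as in the proof of Lemma~\ref{lemma: new q bound}: a contributing cluster meets $\gamma_n$ through a plaquette at some distance $d$, hence has $|\mathcal{V}|\ge\max(R_n,\hat C_m(d+1))$ by Lemma~\ref{lemma: minimal size if at distance}; summing over those plaquettes via Lemma~\ref{lemma: plaquettes on a given distance} and over $|\mathcal{V}|$ using the elementary bound $\sum_{\mathcal{V}\in\Xi_{1^+,k^+,p}}|\Psi_\beta(\mathcal{V})|\le\sum_{j\ge k}(3M)^{2j-1}2^{j/(2(m-1))}e^{-4\real\beta j}$ discussed below, this part, after division by $\ell_n$, is at most $2m^2\sum_{j\ge R_n}(j+1)^m(3M)^{2j-1}2^{j/(2(m-1))}e^{-4\real\beta j}$.

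For the main part $\Xi_{1^+,R_n-}$ I would apply Lemma~\ref{lemma: general limit bound 0} with $k=k'=R_n$; since $R_n=\min(R_n,T_n)$ the set $\Xi_{1^+,k'-,e_0}\smallsetminus\Xi_{1^+,k-,e_0}$ is empty, so the $\gamma^R$-term disappears and the lemma yields
\begin{equation*}
\begin{split}
    &\Bigl|\sum_{\mathcal{V}\in\Xi_{1^+,R_n-}}\Psi_\beta(\mathcal{V})\bigl(1-\rho(\mathcal{V}(q_n))\bigr)-\ell_n\sum_{\mathcal{V}\in\Xi_{1^+,R_n-,e_0}}\frac{\Psi_\beta(\mathcal{V})\bigl(1-\rho(\mathcal{V}(q^0))\bigr)}{|E_{\mathcal{V}}\cap\support\gamma^0|}\Bigr|\\
    &\qquad\le 4\sum_{j=0}^{R_n-1}|(\gamma_n)_{c,j}|\sum_{\mathcal{V}\in\Xi_{1^+,j,e_0}}|\Psi_\beta(\mathcal{V})|.
\end{split}
\end{equation*}
Using $|(\gamma_n)_{c,j}|\le 8j$, dividing by $\ell_n$, and bounding $\sum_{\mathcal{V}\in\Xi_{1^+,j,e_0}}|\Psi_\beta(\mathcal{V})|$ by summing the same elementary bound over the $O_m((j+1)^m)$ plaquettes that a size-$j$ cluster meeting $e_0$ must contain (cf.\ Lemma~\ref{lemma: upper bound on term}), the right-hand side is at most $32m^2\ell_n^{-1}\sum_{j\ge 2(m-1)}(j+1)^{m+1}(3M)^{2j-1}2^{j/(2(m-1))}e^{-4\real\beta j}$. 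Finally, the truncated $\gamma^0$-sum differs from $\hat V_\beta$ only by $\sum_{\mathcal{V}\in\Xi_{1^+,R_n+,e_0}}$, which the same bound controls by $2m^2\sum_{j\ge R_n}(j+1)^m(3M)^{2j-1}2^{j/(2(m-1))}e^{-4\real\beta j}$; since the sums over $\Xi_{1^+,j,e_0}(B_N)$ stabilize in $N$, this uniform tail estimate simultaneously shows that the defining limit of $\hat V_\beta$ exists. Adding the three error contributions (tail, corners, $\hat V_\beta$-truncation) gives exactly the stated inequality, and as $R_n=\min(R_n,T_n)\to\infty$ and $\ell_n\to\infty$ its right-hand side tends to $0$, which proves that the limit exists and equals $\hat V_\beta$.

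The one genuinely technical ingredient, used throughout, is the elementary bound $\sum_{\mathcal{V}\in\Xi_{1^+,j,p}}|U(\mathcal{V})|\le(3M)^{2j-1}2^{j/(2(m-1))}$ (equivalently the displayed estimate for $\sum_{\mathcal{V}\in\Xi_{1^+,k^+,p}}|\Psi_\beta(\mathcal{V})|$): estimate $|U(\mathcal{V})|$ by the number of spanning trees of the interaction graph of the multiset $\mathcal{V}$ (tree-graph inequality), then count pairs (spanning tree of vortices, cluster) of total plaquette-size $j$ anchored at $p$ by growing the cluster vortex-by-vortex along the tree, using at each step Lemma~\ref{lemma: the number of vortices} ($\le M^{2k-1}$ vortices of size $k$ through a given plaquette) and the degree bound $M$ of $\mathcal{G}_2$ at the attaching plaquette; because every vortex has at least $2(m-1)$ plaquettes, $\mathcal{V}$ has at most $j/(2(m-1))$ vortices, and together with the $3^{2j}$ of slack this absorbs the per-vortex, tree-shape and composition factors, yielding the explicit $m$-dependent constants in the statement. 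This bookkeeping is where the care is needed; conceptually the proposition is assembly on top of Lemma~\ref{lemma: general limit bound 0}, which carries all the structural content (the decoupling of the four sides and the replacement of the loop-dependent sum by the universal quantity $\hat V_\beta$).
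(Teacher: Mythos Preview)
Your proof is correct and follows essentially the same route as the paper: apply Lemma~\ref{lemma: general limit bound 0} with $k=k'=\min(R_n,T_n)$ so that the $\gamma^R$-term drops out, then control the corner error, the tail of the $q_n$-sum, and the $\hat V_\beta$-truncation separately via the edge-anchored bounds of Lemma~\ref{lemma: upper bound on term}. Your final paragraph on the tree-graph inequality actually supplies more than the paper does, since the paper's own proof stops at the $C_{\beta^*}e^{-4(\beta-\beta^*)j}$ estimates coming from Lemma~\ref{lemma: upper bound on term} and never derives the explicit $(3M)^{2j-1}2^{j/(2(m-1))}$ constants appearing in the statement.
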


\begin{proof}[Proof of Proposition~\ref{eq: proposition rectangle limit 1}]
    Note that for any \( k \geq 1 \) and \( N \) large enough to ensure that \(\dist(e_0,\partial B_N)>k,\) we have \( \Xi_{1^+,k,e_0}(B_N) = \Xi_{1^+,k,e_0}(B_k).\) 
    By Lemma~\ref{lemma: upper bound on term}, it follows that the sum 
    \begin{align*} 
         &\hat V_{\beta,N} \coloneqq  \sum_{\mathcal{V} \in \Xi_{1^+,k^+,e_0}(B_N)} \bigl| E_{\mathcal{V}}  \cap \support \gamma^0  \bigr|^{-1} \Psi_\beta(\mathcal{V}) \pigl( 1-\rho \bigl(\mathcal{V}(q^0)\bigr) \pigr)  
    \end{align*}
    is well defined and absolute convergent, uniformly in \( N, \) and hence \( \hat V_\beta\) is well defined.

    Fix \( n \geq 1.\) Let \( q_n\) be the unique 2-form with \( \partial q_n = \gamma_n\) that minimizes \( |\support q_n|.\)
    By Proposition~\ref{proposition: the cluster expansion}, we have
    \begin{equation*}
        \begin{split}
            &-\log \mathbb{E}[ W_{\gamma_n} ]_{\beta,N} 
            =
            \sum_{\mathcal{V} \in \Xi} \Psi_\beta(\mathcal{V}) \pigl( 1-\rho \bigl(\mathcal{V}(q_n)\bigr) \pigr)
            =
            \sum_{j=1}^{\infty}  \sum_{\mathcal{V} \in \Xi_{1^+,j}} \Psi_\beta(\mathcal{V}) \pigl( 1-\rho \bigl(\mathcal{V}(q_n)\bigr) \pigr)  .
        \end{split}
    \end{equation*}
    Let \( (k_n)_{n\geq 1} \) be a sequence of non-negative integers such that for each \( n \geq 1, \) \( {k_n \leq  \min(R_n,T_n),}\) and \( \lim_{n\to \infty} k_n/|\gamma_n| = 0.\) 
    Then, for each \( n \geq 1, \) by applying Lemma~\ref{lemma: general limit bound 0} with \(k=k'=k_n ,\) we obtain 
    \begin{equation*}
            \begin{split} 
            &\bigl| -\mathbb{E}[ W_{\gamma_n} ]_{\beta,N}   - |\gamma| \hat V_{\beta,N}
            \bigr| 
            \leq   
            4\sum_{j=1}^{k_n-1} |\gamma_{c,j} | 
            \sum_{\mathcal{V} \in \Xi_{1^+,j,e_0}} \bigl| \Psi_\beta(\mathcal{V}) \bigr| 
            +
            4|\gamma | \sum_{\mathcal{V} \in \Xi_{1^+,k_n+,e_0}} \bigl| \Psi_\beta(\mathcal{V}) \bigr|   .
        \end{split}
    \end{equation*}  
    Using Lemma~\ref{lemma: small vortices}, we note that for any \( j<2(m-1), \) we have \(\Xi_{1^+,k_n-,e_0} = \emptyset. \) 
    Also, we note that since \( \gamma_n\) is rectangular for each \( n \geq 1, \) we have \( |\gamma_{c,j}| \leq 8j\) for each \( j \geq 1. \)
    Using Lemma~\ref{lemma: upper bound on term}, we thus obtain
    \begin{align*}
        &\biggl| \frac{-\log \mathbb{E}[ W_{\gamma_n} ]_{\beta,N}  }{|\gamma_n|} 
        -
        \sum_{\mathcal{V} \in \Xi_{1^+,1^+,e_0}} \bigl|E_{\mathcal{V}} \cap \support \gamma^0\bigr|^{-1} \Psi_\beta(\mathcal{V}) \pigl( 1-\rho \bigl(\mathcal{V}(q^0)\bigr) \pigr)
        \biggr|
        \\&\qquad \leq    
        4 |\gamma_n|^{-1}  \sum_{j=2(m-1)}^\infty|\gamma_{c,j}|   \sum_{\mathcal{V} \in \Xi_{1^+,j,e_0}} \bigl| \Psi_\beta(\mathcal{V})\bigr|  
        +4\sum_{\mathcal{V} \in \Xi_{1^+,k_n+,e_0} } \bigl| \Psi_\beta(\mathcal{V}) \bigr|  
        \\&\qquad\leq  
        32 |\gamma_n|^{-1} \sum_{j=2(m-1)}^{\infty} j 
        \sum_{i=j}^\infty
         \binom{m}{2}2(i+1)^m C_{\beta^*} e^{-4(\beta-\beta^*)i} .
        \\&\qquad\qquad+
        4 \sum_{j=k_n}^\infty
            \binom{m}{2}2(j+1)^m C_{\beta^*} e^{-4(\beta-\beta^*)j} 
        .
    \end{align*} 
    Letting first \( N\) and then \( n\) tend to infinity, the desired conclusion immediately follows.
\end{proof}

\begin{proposition}\label{eq: proposition rectangle limit 2}
    Let \( \real  \beta > \beta_0(m).\) 
    Let  \(  R \geq 1, \) and let \( (T_n)_{n \ge 1}\) be a non-decreasing sequence of positive integers with \( T_n \geq R \) and \( \lim_{n \to \infty} T_n = \infty.\) For each \( n \geq 1,\) let \( \gamma_n\) be a rectangular loop with axis-parallel sides with lengths \( R\) and \( T_n\), respectively.
    Then the limit \( {\lim_{n\to \infty} -\log \langle W_{\gamma_n}\rangle_\beta/|\gamma_n|} \) exists and is given by
    \begin{equation*}
        \begin{split}
            \hat V_\beta(R) \coloneqq  V_\beta(R)/2 \coloneqq 
         &\sum_{k=1}^{R-1} \sum_{\mathcal{V} \in \Xi_{1^+,k,e_0}} \bigl| E_{\mathcal{V}}  \cap \support \gamma^0  \bigr|^{-1} \Psi_\beta(\mathcal{V}) \pigl( 1-\rho \bigl(\mathcal{V}(q^0)\bigr) \pigr) 
         \\&\qquad+
         \sum_{\mathcal{V} \in \Xi_{1^+,R+,e_0}} \bigl| E_{\mathcal{V}}  \cap \support \gamma^R  \bigr|^{-1} \Psi_\beta(\mathcal{V}) \pigl( 1-\rho \bigl(\mathcal{V}(q^R)\bigr) \pigr) .
         \end{split}
    \end{equation*}
    Moreover, for any \( n \geq 1, \) we have
    \begin{align*}
        \biggl| \frac{-\log \langle W_{\gamma_n} \rangle_\beta }{|\gamma_n|} 
        -
        \hat V_\beta(R)  
        \biggr|
        \leq& 
         32m^2|\gamma_n|^{-1} \sum_{j=2(m-1)}^\infty (3M)^{2j-1} (j+1)^{m+1}2^{j/(2(j-1))}e^{-4\beta j}
        \\&\qquad+ 4m^2\sum_{j=T_n}^\infty (3M)^{2j-1} (j+1)^m2^{j/(2(j-1))}e^{-4\beta j} = O_{m,\beta}(T_n^{-1}).
    \end{align*}
    Finally, we have
    \begin{align*} 
         \bigl| \hat V_\beta(R) - \hat V_\beta \bigr| \leq 4\sum_{j=R}^\infty (3M)^{2j-1} (j+1)^m2^{j}e^{-4\beta j}
    \end{align*}
    and hence \( \lim_{R \to \infty} \hat V_\beta(R) = \hat V_\beta.\)
\end{proposition}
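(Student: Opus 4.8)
The plan is to mimic the proof of Proposition~\ref{eq: proposition rectangle limit 1}, the only structural difference being the choice of cutoffs fed into Lemma~\ref{lemma: general limit bound 0}: there one took $k=k'=k_n\to\infty$, so that $\Xi_{1^+,k'-,e_0}\smallsetminus\Xi_{1^+,k-,e_0}=\emptyset$ and only the $\gamma^0$-weighted sum survived, whereas here $R$ is fixed and I keep $k=R$, so the second, $\gamma^R$-weighted sum persists and yields the second sum in the definition of $\hat V_\beta(R)$. Concretely, start from Proposition~\ref{proposition: the cluster expansion}, which gives $-\log\mathbb{E}_{\beta,N}[W_{\gamma_n}]=\sum_{\mathcal{V}\in\Xi}\Psi_\beta(\mathcal{V})(1-\rho(\mathcal{V}(q_n)))$ with $q_n$ the flat surface bounded by $\gamma_n$, and choose an auxiliary sequence $(k_n')$ of integers with $2(m-1)\le R\le k_n'\le T_n$ (possible for $n$ large), $k_n'\to\infty$ and $k_n'/|\gamma_n|\to0$, for instance $k_n'=\min(T_n,\lceil\log|\gamma_n|\rceil)$.

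Take $N$ large enough that $\support\gamma_n\subset C_1(B_{N-R})$ and apply Lemma~\ref{lemma: general limit bound 0} with $(k,k')=(R,k_n')$. This bounds $\sum_{\mathcal{V}\in\Xi_{1^+,k_n'-}}\Psi_\beta(\mathcal{V})(1-\rho(\mathcal{V}(q_n)))$, up to the corner error $4\sum_{j<k_n'}|\gamma_{c,j}|\sum_{\mathcal{V}\in\Xi_{1^+,j,e_0}}|\Psi_\beta(\mathcal{V})|$, by $|\gamma_n|$ times the sum of (a) the $\gamma^0$-weighted contribution over $\Xi_{1^+,R-,e_0}$ --- which equals $\sum_{k=1}^{R-1}\sum_{\mathcal{V}\in\Xi_{1^+,k,e_0}}|E_{\mathcal{V}}\cap\support\gamma^0|^{-1}\Psi_\beta(\mathcal{V})(1-\rho(\mathcal{V}(q^0)))$ since $\Xi_{1^+,R-,e_0}$ consists of the clusters with $e_0\in E_{\mathcal{V}}$ and $|\mathcal{V}|<R$ --- and (b) the $\gamma^R$-weighted contribution over $\Xi_{1^+,k_n'-,e_0}\smallsetminus\Xi_{1^+,R-,e_0}$. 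Three routine steps then finish the estimate: (i) complete the truncated $\gamma^R$-sum in (b) to the full sum over $\Xi_{1^+,R+,e_0}$ appearing in $\hat V_\beta(R)$, the discarded clusters with $|\mathcal{V}|\ge k_n'$ being controlled $N$-uniformly by Lemma~\ref{lemma: upper bound on term}~\eqref{eq: upper bound on term 1} with $e=e_0$, $k=k_n'$; (ii) replace $\sum_{\mathcal{V}\in\Xi_{1^+,k_n'-}}$ on the left by the full cluster sum $\sum_{\mathcal{V}\in\Xi}$, the added clusters with $|\mathcal{V}|\ge k_n'$ contributing only when $\mathcal{V}(q_n)\ne0$ and hence being controlled by Lemma~\ref{lemma: new q bound} with $q=q_n$, $k=k_n'$; (iii) bound the corner error using $|\gamma_{c,j}|\le 8j$ for a rectangular loop together with Lemma~\ref{lemma: upper bound on term}~\eqref{eq: upper bound on term 2}. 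Dividing by $|\gamma_n|$ and letting first $N\to\infty$ (the finite-box local sums around $e_0$ stabilise since $\Xi_{1^+,j,e_0}(B_N)=\Xi_{1^+,j,e_0}(B_j)$ for $N\ge j$) and then $n\to\infty$ (the corner error is $O_{m,\beta}(|\gamma_n|^{-1})$ and both tail errors tend to $0$) gives existence of the limit, its value $\hat V_\beta(R)$, and the quantitative bound; the explicit constants $(3M)^{2j-1}$, $2^{j/(2(m-1))}$ etc.\ arise from inserting the bound of Lemma~\ref{lemma: general Cbeta} for $C_{\beta^*}$ (with $\beta^*$ chosen just above $\beta_0(m)$) into the right-hand sides of Lemma~\ref{lemma: upper bound on term}.

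For the last assertion, observe that the summands with $|\mathcal{V}|<R$ are literally identical in the series defining $\hat V_\beta(R)$ and in the one defining $\hat V_\beta=\lim_N\sum_{\mathcal{V}\in\Xi_{1^+,1^+,e_0}}|E_{\mathcal{V}}\cap\support\gamma^0|^{-1}\Psi_\beta(\mathcal{V})(1-\rho(\mathcal{V}(q^0)))$ --- both weighted by $\gamma^0$, $q^0$ --- so the difference telescopes to
\[
\hat V_\beta(R)-\hat V_\beta=\sum_{\mathcal{V}\in\Xi_{1^+,R+,e_0}}\Bigl(\frac{\Psi_\beta(\mathcal{V})\bigl(1-\rho(\mathcal{V}(q^R))\bigr)}{|E_{\mathcal{V}}\cap\support\gamma^R|}-\frac{\Psi_\beta(\mathcal{V})\bigl(1-\rho(\mathcal{V}(q^0))\bigr)}{|E_{\mathcal{V}}\cap\support\gamma^0|}\Bigr).
\]
Since $e_0\in E_{\mathcal{V}}\cap\support\gamma^0\subseteq E_{\mathcal{V}}\cap\support\gamma^R$, both prefactors lie in $(0,1]$ and $|1-\rho(\cdot)|\le2$, so each summand has modulus at most $4|\Psi_\beta(\mathcal{V})|$; applying Lemma~\ref{lemma: upper bound on term}~\eqref{eq: upper bound on term 1} with $k=R$, $e=e_0$ bounds the whole difference by a tail of a convergent series --- the displayed estimate --- which tends to $0$ as $R\to\infty$ because $\real\beta>\beta_0(m)$, whence $\lim_{R\to\infty}\hat V_\beta(R)=\hat V_\beta$.

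I do not expect a genuine obstacle: all the analytic content is already packaged in Lemmas~\ref{lemma: general limit bound 0},~\ref{lemma: new q bound} and~\ref{lemma: upper bound on term}. The main work is bookkeeping --- keeping straight which ranges of $|\mathcal{V}|$ feed the $\gamma^0$-term, the $\gamma^R$-term and the corner error, and checking that the translation/rotation-equivariance built into the map $\nu\mapsto\sigma^\nu$ (valid once $\support\gamma_n$ is far from $\partial B_N$) really lets one replace local sums around an arbitrary non-corner edge of $\gamma_n$ by reference sums at $e_0$ against $\gamma^0$ (for $|\mathcal{V}|<R$) or $\gamma^R$ (for $|\mathcal{V}|\ge R$). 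A small point to mind is that the two tail corrections in (i) and (ii) must be estimated by $N$-uniform bounds before any limit in $N$ is taken, exactly as in the proof of Proposition~\ref{eq: proposition rectangle limit 1}.
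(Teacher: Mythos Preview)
Your proposal is correct and follows essentially the same route as the paper's proof: start from the cluster expansion (Proposition~\ref{proposition: the cluster expansion}), feed Lemma~\ref{lemma: general limit bound 0} with a growing cutoff $k_n'\le T_n$, control corner and tail errors via Lemma~\ref{lemma: upper bound on term}, and pass first $N\to\infty$ then $n\to\infty$. Your explicit choice $(k,k')=(R,k_n')$ is in fact what the argument requires (the paper's text ``$k=k'=k_n$'' is evidently a slip, as the structure of $\hat V_{\beta,N}(R)$ forces the $\gamma^0$/$\gamma^R$ split at $|\mathcal{V}|=R$). The only minor deviation is that for the tail of the full cluster sum you invoke Lemma~\ref{lemma: new q bound}, whereas the paper (implicitly, as in the proof of Proposition~\ref{eq: proposition rectangle limit 1}) uses the simpler observation that $\mathcal{V}(q_n)\neq 0$ forces $E_{\mathcal{V}}\cap\support\gamma_n\neq\emptyset$, hence $\sum_{\mathcal{V}\in\Xi_{1^+,k_n'^+,q_n}}|\Psi_\beta(\mathcal{V})|\le |\gamma_n|\sum_{\mathcal{V}\in\Xi_{1^+,k_n'^+,e_0}}|\Psi_\beta(\mathcal{V})|$ by translation invariance; both routes work and yield the same final bound.
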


\begin{proof}[Proof of Proposition~\ref{eq: proposition rectangle limit 2}]
    Note that for any \( k \geq 1 \) and \( N \) large enough to ensure that \(\dist(e_0,\partial B_N)>k,\) we have \( \Xi_{1^+,k,e_0}(B_N) = \Xi_{1^+,k,e_0}(B_k).\) 
    By Lemma~\ref{lemma: upper bound on term}, it follows that the sum 
    \begin{align*} 
         \hat V_{\beta,N}(R) \coloneqq &\sum_{k=1}^{R-1} \sum_{\mathcal{V} \in \Xi_{1^+,k,e_0}} \bigl| E_{\mathcal{V}}  \cap \support \gamma^0  \bigr|^{-1} \Psi_\beta(\mathcal{V}) \pigl( 1-\rho \bigl(\mathcal{V}(q^0)\bigr) \pigr) 
         \\&\qquad+ 
         \sum_{k=R}^\infty \sum_{\mathcal{V} \in \Xi_{1^+,k,e_0}} \bigl| E_{\mathcal{V}}  \cap \support \gamma^R  \bigr|^{-1} \Psi_\beta(\mathcal{V}) \pigl( 1-\rho \bigl(\mathcal{V}(q^R)\bigr) \pigr) 
    \end{align*}
    is well defined and absolute convergent, uniformly in \( N, \) and hence \( \hat V_\beta(R)\) is well defined.

    Fix \( n \geq 1. \)
    By Proposition~\ref{proposition: the cluster expansion}, we have
    \begin{equation*}
        \begin{split}
            &-\log \mathbb{E}[ W_{\gamma_n} ]_{\beta,N} 
            =
            \sum_{\mathcal{V} \in \Xi} \Psi_\beta(\mathcal{V}) \pigl( 1-\rho \bigl(\mathcal{V}(q_n)\bigr) \pigr)
            =
            \sum_{j=1}^{\infty}  \sum_{\mathcal{V} \in \Xi_{1^+,j}} \Psi_\beta(\mathcal{V}) \pigl( 1-\rho \bigl(\mathcal{V}(q_n)\bigr) \pigr)  .
        \end{split}
    \end{equation*}

    Let \( (k_n')_{n\geq 1}\) be a sequence of non-negative integers such that for each \( n \geq 1,\) \( {k_n' \leq \max(R_n,T_n),}\) and \( \lim_{n \to \infty} k_n'/|\gamma_n| = 0.\) Then, for each \( n \geq 1, \) by applying Lemma~\ref{lemma: general limit bound 0} with \(k=k'=k_n ,\) we obtain 
    \begin{align*}
        &\biggl| -\log \mathbb{E}[ W_{\gamma_n} ]_{\beta,N} 
        -
        |\gamma_n|\hat V_{\beta,N}(R)  
        \biggr|
        \\&\qquad\leq 
        4 \sum_{j= 1}^{k_n'-1}|\gamma_{c,j}|  \sum_{\mathcal{V} \in \Xi_{1^+,j,e_0}} \bigl| \Psi_\beta(\mathcal{V}) \bigr|   
        +
        4|\gamma_n|\sum_{\mathcal{V} \in \Xi_{1^+,k_n'+,e_0} } \bigl| \Psi_\beta(\mathcal{V}) \bigr| 
    \end{align*}
    Proceeding as in the proof of Proof of Proposition~\ref{eq: proposition rectangle limit 1}, we obtain the desired conclusion.
\end{proof}

\begin{proof}[Proof of Theorem~\ref{theorem: limit of ratio exists 2}]
    By Proposition~\ref{eq: proposition rectangle limit 2} applied with \( T_n = n, \)  \( T_n = R, \) the limit 
    \[ 
        \lim_{T \to \infty} - \frac{1}{T} \log \, \langle W_{\gamma_{R,T}} \rangle_\beta = \lim_{ T \to \infty } \lim_{N \to \infty}
        -\frac{1}{T}{\log \mathbb{E}[ W_{\gamma_n} ]_{\beta,N}  }
        =
        \lim_{ n \to \infty } \lim_{N \to \infty}
        -\frac{2}{{|\gamma_n|}}{\log \mathbb{E}[ W_{\gamma_n} ]_{\beta,N}   }
    \] 
    exists and is equal to \(2V_\beta(R). \)
    Using Theorem~\ref{theorem: main theorem}, the desired conclusion follows.
\end{proof}

\begin{proof}[Proof of Theorem~\ref{theorem: limit of ratio exists}]
    By Proposition~\ref{eq: proposition rectangle limit 1} applied with \( R_n = Hn \) and \( T_n = Ln,\) the limit 
        \[ 
   \lim_{n \to \infty} -\frac{1}{R_n+T_n} \log \, \langle W_{\gamma_{n}} \rangle_\beta = \lim_{n \to \infty} \lim_{N \to \infty}
    -\frac{2}{{|\gamma_{n}|}}{\log \langle W_{\gamma_{n}} \rangle_{N,\beta} } .
    \] 
    exists and is equal to \( 2 \hat V_\beta.\)  We conclude using Theorem~\ref{theorem: main theorem}.
\end{proof}

\end{document}